\newtheorem{theorem}{Theorem}[section]
\newtheorem{lemma}[theorem]{Lemma}
\newtheorem{corollary}[theorem]{Corollary}
\theoremstyle{definition}
\newtheorem{example}[theorem]{Example}
\newtheorem{remark}[theorem]{Remark}
\DeclarePairedDelimiter\ceil{\lceil}{\rceil}
\newcounter{minutes}\setcounter{minutes}{\time}
\newcounter{hours}\setcounter{hours}{\time}
\numberwithin{equation}{section}
\begin{document}

\bibliographystyle{amsplain}

\title[Properties of $\beta$-Ces\`{a}ro operators]
{Properties of $\beta$-Ces\`{a}ro operators on $\alpha$-Bloch space }

\def\thefootnote{}
\footnotetext{ \texttt{\tiny File:~\jobname .tex,
          printed: \number\day-\number\month-\number\year,
          \thehours.\ifnum\theminutes<10{0}\fi\theminutes}
} \makeatletter\def\thefootnote{\@arabic\c@footnote}\makeatother

\author[Shankey Kumar]{Shankey Kumar}
\address{Shankey Kumar, Discipline of Mathematics,
Indian Institute of Technology Indore,
Indore 453552, India.}
\email{shankeygarg93@gmail.com}

\author[S. K. Sahoo]{Swadesh Kumar Sahoo$^*$}
\address{Swadesh Kumar Sahoo, Discipline of Mathematics,
Indian Institute of Technology Indore,
Indore 453552, India.}
\email{swadesh.sahoo@iiti.ac.in}

\subjclass[2010]{Primary: 30H30, 46B50, 47B38; Secondary: 30H05, 45P05, 46B25}
\keywords{The $\alpha$-Bloch space, The $\beta$-Ces\`aro operator, Norms, Essential norm, Spectrum, Compact operator, 
Separable space, Bounded operator\\
${}^{\mathbf{*}}$ {\tt Corresponding author}}

\begin{abstract}
For each $ \alpha > 0 $, the $\alpha$-Bloch space
is consisting of all analytic functions $f$ on the unit disk satisfying
$ \sup_{|z|<1} (1-|z|^2)^\alpha |f'(z)| < + \infty.$
In this paper, we consider the following complex integral operator, namely the $\beta$-Ces\`{a}ro operator
\begin{equation}
 C_\beta(f)(z)=\int_{0}^{z}\frac{f(w)}{w(1-w)^{\beta}}dw \nonumber
\end{equation}
and its generalization, acting from the $\alpha$-Bloch space to itself, where $f(0)=0$ and $\beta\in\mathbb{R}$.
We investigate the boundedness and compactness of the $\beta$-Ces\`{a}ro operators and their generalization.
Also we calculate the essential norm and spectrum of these operators.
\end{abstract}

\maketitle



\section{\textbf{Introduction}}\label{1sec1}
Let $\mathcal{H}_0$ denote the class of analytic functions $f$ in the unit disk 
$\mathbb{D}:=\{z\in\mathbb{C}:|z|<1\}$ with $f(0)=0$.
 In this paper, we consider a complex integral operator, called {\em $\beta$-Ces\`{a}ro operator}, denoted by $C_\beta$, for $\beta \in \mathbb{R}$,
is defined on the space $\mathcal{H}_0$  as  
\begin{equation}\label{1eq1.1}
C_\beta(f)(z)=\int_{0}^{z}\frac{f(w)}{w(1-w)^{\beta}}dw. 
\end{equation}

This operator includes the Alexander operator ($\beta=0$), see \cite[chapter 8]{Duren83} and \cite[chapter 1]{Miller90}, and the Ces\`{a}ro operator ($\beta=1$), see  \cite{Hartmann74}.
We can further generalize this operator, if we replace $(1-w)^{-\beta}$ by 
\begin{equation}\label{1eq1.2}
g_{\beta}(w)=\sum_{j=1}^{k}\frac{a_j}{{(1-b_{j} w)}^{\beta}}+h(w)
\end{equation}
in \eqref{1eq1.1}, where $b_j$, $1\leq j\leq k$, are distinct points on the unit circle, 
$|a_j|>0,\,\ \forall \ j$, and $h$ is  bounded analytic function in $\mathbb{D}$. We call this operator as
{\em generalized $\beta$-Ces\`{a}ro operator} because for $b_j=a_j=k=1$ and $h=0$, we obtain the $\beta$-Ces\`{a}ro operator. The generalized $\beta$-Ces\`{a}ro operator denoted by 
 $C_{g_\beta}$. For the choices $\beta=0$ and $\beta=1$ in \eqref{1eq1.2}, 
 the generalized $\beta$- Ces\`{a}ro operators are respectively called as the {\em generalized Alexander operator} and the {\em generalized Ces\`{a}ro operator}.

The Alexander operator and the Ces\`{a}ro operator are used by many authors for different purposes, see \cite{Duren83,Hartmann74,Miller90,Ponnusamy18}.
Moreover, boundedness of the Ces\`{a}ro and related operators in various function spaces
are studied in the literature; see \cite{DS93,Mia92,Sis90,Xia97}.
In this paper we study these operators as linear operators on $\alpha$-Bloch space, denoted by $\mathcal{B_\alpha}$,
and is defined as follows: the {\em $\alpha$-Bloch space} \cite{Zhu93} of $\mathbb{D}$, denoted by $\mathcal{B_\alpha} $, for each $ \alpha > 0 $,
is consisting of analytic functions $f$ on $\mathbb{D}$ such that
$$
  \sup \limits_{z\in\mathbb{D}} (1-|z|^2)^\alpha |f'(z)| < + \infty.
$$
 The space $\mathcal{B_\alpha}$ is a complex Banach space with the norm 
\begin{equation}\label{1eq1.4}
 \|f\|=|f(0)|+\|f\|_{\mathcal{B_\alpha}},
 \end{equation}
 whereas $\|f\|_{\mathcal{B_\alpha}}=\sup_{z\in\mathbb{D}} (1-|z|^2)^\alpha |f'(z)|$ represents a semi-norm. The proof of this follows from the proof of Proposition 2.5 of \cite{Hidetaka17}.
 If we restrict this space with the condition $f(0)=0$, for $f \in \mathcal{B_\alpha}$, this restricted space is a subspace 
 of $\mathcal{B_\alpha}$, denoted by $\mathcal{B}_\alpha^0$.
 The semi-norm $\|.\|_{\mathcal{B_\alpha}}$ on $\mathcal{B_\alpha}$ becomes norm on $\mathcal{B}_\alpha^0$.
 We observe that $\mathcal{B}_\alpha^0$ is a Banach space with norm $\|.\|_{\mathcal{B_\alpha}}$ and proof of this is explained in Section~\ref{1sec2}.
Throughout this paper we consider $\alpha>0$ unless it is specified.
More on literature survey about the $1$-Bloch space can be found in \cite{Zhu05,Zhu07}.

  Many authors study integral operators on analytic function spaces. For instance, Stevic studies compactness and essential norm
  of the integral type operator 
  \begin{equation}
    P_\varphi^g(f)(z)=\int_{0}^{1}f(\varphi(tz))g(tz)\frac{dt}{t}\nonumber
  \end{equation}
where $g$ is an analytic function in $\mathbb{D}$, $g(0)=0$ and $\varphi$ is a holomorphic self-map of $\mathbb{D}$, 
acting on Bloch-type spaces, see \cite{Stevic09,Stevic10}. 
Secondly, in \cite{AHLNP05}, boundedness of generalized Ces\`{a}ro averaging operators 
on certain function spaces are investigated. 
These operators are very similar 
to our operator $C_{g_\beta}$, but they do not simultaneously include the Alexander operator as well as the Ces\`{a}ro operator. Main motive of this 
paper is to study spectral properties of generalized $\beta$-Ces\`{a}ro operators on $\mathcal{B}_\alpha^0$ which simultaneously include the Alexander operator as well as the Ces\`{a}ro operator.

In this scenario, the second section contains boundedness property of the $\beta$-Ces\`{a}ro operators
and also we have some examples to explain the unboundedness property of the $\beta$-Ces\`{a}ro operators. Compactness of these operators are studied in Section~\ref{1sec3}. In particular,
essential norm and spectrum are calculated in Section~\ref{1sec4} and \ref{1sec5}. Finally, we include an application section which assures that $\mathcal{B}_\alpha^0$ is a separable space, for each $\alpha>0$.


\section{\textbf{ Boundedness of the $\beta$-Ces\`{a}ro operators}}\label{1sec2}
 In this section, we discuss the boundedness and unboundedness of the $\beta$-Ces\`{a}ro operators, defined by \eqref{1eq1.1}, on $\mathcal{B}_\alpha^0$.
 At the end of this section, we provide some illustrative examples to show that the $\beta$-Ces\`{a}ro operators are unbounded linear operators
 on $\mathcal{B}_\alpha^0$, for some $\beta$.
In Table~\ref{table}, we discuss all restrictions on $\beta$ for which the $\beta$-Ces\`{a}ro operators are bounded and unbounded. In the sequel, first we describe the completeness property of $\mathcal{B}_\alpha^0$ under $\|.\|_{\mathcal{B_\alpha}}$.
 
\begin{table}[H]
\begin{tabular}{|c|c|c|c|} 
\hline
Bounded & Unbounded\\
\hline
$\beta \leq \alpha <1$ & $\beta>\alpha$ (Example \ref{1ex2.6}) \\ 
$\alpha>1\geq\beta$ & $\alpha=\beta \geq 1$ (Example \ref{1ex2.7}) \\ 
$\beta<\alpha=1$  & $\alpha>\beta>1$ (Example \ref{1ex2.8})   \\ 
\hline
\end{tabular}
\vspace*{0.2cm}
\caption{Boundedness of $\beta$-Ces\`{a}ro operator} \label{table}
\end{table}

\begin{theorem}
  For each $\alpha >0 $,  $(\mathcal{B}_\alpha^0, \|.\|_{\mathcal{B_\alpha}} )$ is a Banach space.
 \end{theorem}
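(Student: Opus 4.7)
The plan is to follow the standard completeness argument for weighted sup-norm spaces of analytic functions. First I would verify that $\|\cdot\|_{\mathcal{B_\alpha}}$ is genuinely a norm on $\mathcal{B}_\alpha^0$: positive homogeneity and the triangle inequality are immediate from the supremum definition, while definiteness uses the condition $f(0)=0$, because $\|f\|_{\mathcal{B_\alpha}}=0$ forces $f'\equiv 0$ on $\mathbb{D}$, hence $f\equiv f(0)=0$.

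For completeness, I would first record the pointwise control that the norm affords. From the definition, $|f'(z)|\le(1-|z|^2)^{-\alpha}\|f\|_{\mathcal{B_\alpha}}$ for every $z\in\mathbb{D}$, and integrating along the radial segment from $0$ to $z$, together with $f(0)=0$, gives $|f(z)|\le\bigl(\int_0^{|z|}(1-t^2)^{-\alpha}dt\bigr)\|f\|_{\mathcal{B_\alpha}}$. Thus on any compact subset of $\mathbb{D}$, both $|f|$ and $|f'|$ are controlled by a constant multiple of $\|f\|_{\mathcal{B_\alpha}}$, so norm-convergence implies locally uniform convergence of both $f_n$ and $f_n'$.

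Given a Cauchy sequence $\{f_n\}\subset\mathcal{B}_\alpha^0$, these estimates show that $\{f_n\}$ and $\{f_n'\}$ are locally uniformly Cauchy on $\mathbb{D}$. By the Weierstrass theorem on uniform limits of analytic functions, $f_n\to f$ and $f_n'\to f'$ locally uniformly for some analytic $f$, and the condition $f_n(0)=0$ passes to the limit, so $f\in\mathcal{H}_0$. A Cauchy sequence is bounded in norm, say by $M$, and taking $n\to\infty$ pointwise in $(1-|z|^2)^\alpha|f_n'(z)|\le M$ yields $(1-|z|^2)^\alpha|f'(z)|\le M$ for every $z$, so $f\in\mathcal{B}_\alpha^0$.

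For norm convergence, given $\varepsilon>0$ choose $N$ so that $\|f_n-f_m\|_{\mathcal{B_\alpha}}<\varepsilon$ whenever $m,n\ge N$; fixing $n\ge N$ and $z\in\mathbb{D}$, letting $m\to\infty$ in $(1-|z|^2)^\alpha|f_n'(z)-f_m'(z)|<\varepsilon$ (using pointwise convergence of $f_m'$ to $f'$) gives $(1-|z|^2)^\alpha|f_n'(z)-f'(z)|\le\varepsilon$, and taking the supremum over $z$ yields $\|f_n-f\|_{\mathcal{B_\alpha}}\le\varepsilon$. There is no real obstacle here; the only point that needs care is the order of operations in the final step — taking the pointwise limit before the supremum — which is what lets us avoid any interchange of $\sup$ and $\lim$.
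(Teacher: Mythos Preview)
Your argument is correct and self-contained: you build the limit function directly via locally uniform convergence and then verify membership and norm-convergence by passing to the pointwise limit inside the weighted supremum. The paper takes a different, shorter route: it quotes the known completeness of the full space $(\mathcal{B}_\alpha,\|\cdot\|)$ from \cite{Zhu93} and then only checks that $\mathcal{B}_\alpha^0$ is closed. Concretely, since $\|g\|=\|g\|_{\mathcal{B}_\alpha}$ for $g\in\mathcal{B}_\alpha^0$, a $\|\cdot\|_{\mathcal{B}_\alpha}$-Cauchy sequence in $\mathcal{B}_\alpha^0$ is $\|\cdot\|$-Cauchy in $\mathcal{B}_\alpha$, hence has a limit $f\in\mathcal{B}_\alpha$; continuity of both $\|\cdot\|$ and $\|\cdot\|_{\mathcal{B}_\alpha}$ along that sequence forces $\|f\|=\|f\|_{\mathcal{B}_\alpha}$, i.e.\ $f(0)=0$. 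Your approach has the advantage of not relying on an external completeness result and of making explicit the pointwise estimates (the same ones that reappear in Lemma~\ref{1lemma2.1}); the paper's approach is quicker once completeness of the ambient space is granted.
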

\begin{proof}
  We know by \cite[Proposition~1]{Zhu93} that for each $\alpha >0$, $\mathcal{B_\alpha}$ is a Banach space. 
The only thing we need to show that this subspace is a closed subspace of $\mathcal{B_\alpha}$. 
Let ${(f_{n})}_{n \in \mathbb{N}}$ be a Cauchy sequence in ($\mathcal{B}_\alpha^0, \|.\|_{\mathcal{B_\alpha}} $). 
Then for $\epsilon > 0$, there exists a $P \in \mathbb{N}$ such that 
$$
 \|f_n-f_m\|_{\mathcal{B_\alpha}} < \epsilon \quad \mbox{for all $n,m\geq P$}.
$$
As we know that
$$
 \|g\|_{\mathcal{B_\alpha}} = \|g\| , \quad \mbox{for all $g \in \mathcal{B}_\alpha^0$}   
$$
we obtain
$$
 \|f_n-f_m\|< \epsilon ,\hspace{5mm} \forall n,m\geq P. 
$$
   Then, there exist a function $ f \in  \mathcal{B}_\alpha$ such that 
 ${(f_{n})}_{n \in \mathbb{N}}$ converges to $f$. Now, we need to show that $f \in \mathcal{B}_\alpha^0 $. By using equation \eqref{1eq1.4}, we have the following property
$$
   \|.\|_{\mathcal{B_\alpha}} \leq \|.\|.
 $$
This property imply that the sequence ($\|f_n\|_{\mathcal{B_\alpha}}$) converges to $\|f\|_{\mathcal{B_\alpha}}$, equivalently, from here, we can say that 
 the sequence ($\|f_n\|$) converges to $\|f\|_{\mathcal{B_\alpha}}$, and also, this sequence converges to $\|f\|$.
 From the uniqueness of the limit of convergent sequence,
 $$
 \|f\| = \|f\|_{\mathcal{B_\alpha}}.
$$
Since $f(0)=0$, consequently we have $f \in \mathcal{B}_\alpha^0$.
\end{proof}
  
To obtain our desired results, we need the following lemma. The proof of this lemma plays a key role
in most of the proofs of our main results. Therefore, we discuss the proof of this lemma in this section.
\begin{lemma} \cite{Julio10}\label{1lemma2.1}
For $\alpha>0$, let $f \in \mathcal{B}_\alpha$, we have the following basic properties:
\begin{enumerate}
 \item[\bf (i)] If $\alpha<1$, then f is a bounded analytic function.
 \item[\bf (ii)]  If $\alpha=1$, then
$$
|f(z)|\leq |f(0)|+\frac{\|f\|_{\mathcal{B}_1}}{2}\log \bigg(\frac{1+|z|}{1-|z|}\bigg).
$$
\item[\bf (iii)] If $\alpha>1$, then
$$
|f(z)|\leq |f(0)|+\frac{\|f\|_{\mathcal{B}_\alpha}}{\alpha-1} \bigg(\frac{1}{(1-|z|)^{\alpha-1}}-1\bigg).
$$
\end{enumerate}
\end{lemma}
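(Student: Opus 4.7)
The plan is to prove all three parts uniformly by estimating $|f(z)-f(0)|$ along the radial line segment from $0$ to $z$ using the fundamental theorem of calculus, and then splitting into cases based on the integrability of $(1-u^2)^{-\alpha}$ near $u=1$. The single key inequality we will exploit is the defining Bloch bound
\[
|f'(w)| \le \frac{\|f\|_{\mathcal{B}_\alpha}}{(1-|w|^2)^\alpha}, \qquad w\in\mathbb{D}.
\]

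First I would write $f(z)-f(0)=\int_{[0,z]}f'(w)\,dw$ and parametrize the segment by $w=tz$, $t\in[0,1]$, so that
\[
|f(z)-f(0)| \;\le\; \int_0^1 |f'(tz)|\,|z|\,dt \;\le\; \|f\|_{\mathcal{B}_\alpha}\int_0^1 \frac{|z|\,dt}{(1-t^2|z|^2)^\alpha}.
\]
The substitution $u=t|z|$ converts the right-hand side to $\|f\|_{\mathcal{B}_\alpha}\int_0^{|z|}(1-u^2)^{-\alpha}\,du$. Everything then reduces to evaluating or bounding this elementary real integral as a function of $r=|z|\in[0,1)$.

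Next I would handle the three cases. For case (ii), $\alpha=1$: partial fractions give $\int_0^r\frac{du}{1-u^2}=\tfrac{1}{2}\log\!\frac{1+r}{1-r}$, which is exactly the stated bound. For case (iii), $\alpha>1$: I would use the majorization $(1-u^2)^\alpha=(1-u)^\alpha(1+u)^\alpha\ge (1-u)^\alpha$ on $[0,1)$, and then compute $\int_0^r (1-u)^{-\alpha}\,du=\frac{1}{\alpha-1}\bigl((1-r)^{1-\alpha}-1\bigr)$, which is precisely the claimed bound. For case (i), $\alpha<1$: the same majorization shows the integral is bounded by $\frac{1-(1-r)^{1-\alpha}}{1-\alpha}\le \frac{1}{1-\alpha}$ uniformly in $r$, so
\[
|f(z)| \;\le\; |f(0)| + \frac{\|f\|_{\mathcal{B}_\alpha}}{1-\alpha}\qquad\text{for all } z\in\mathbb{D},
\]
proving that $f$ is bounded.

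There is no real obstacle here: the only choice with substance is the comparison $(1-u^2)^\alpha\ge(1-u)^\alpha$ used in (i) and (iii), which is what makes the bounds in those two cases match the stated form exactly. The $\alpha=1$ case avoids the comparison and gets an equality up to the factor $\tfrac12$ from partial fractions; the $\alpha<1$ case only needs uniform boundedness, so the loss in the majorization is harmless. Finally, one should remark that the integration along the radial segment is justified because $f'$ is continuous on $\overline{\mathbb{D}(0,|z|)}$, so the line integral agrees with the difference $f(z)-f(0)$ by holomorphy.
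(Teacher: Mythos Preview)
Your proof is correct and follows essentially the same route as the paper: integrate along the radial segment, apply the Bloch bound $|f'(w)|\le \|f\|_{\mathcal{B}_\alpha}(1-|w|^2)^{-\alpha}$, and for $\alpha\neq 1$ use the comparison $(1-u^2)^\alpha\ge (1-u)^\alpha$ (the paper phrases this as $1+|z|t\ge 1$) before computing the resulting elementary integral, while for $\alpha=1$ evaluate $\int_0^{r}(1-u^2)^{-1}\,du$ directly. The only cosmetic difference is that you perform the substitution $u=t|z|$ explicitly before the case split, whereas the paper carries the parameter $|z|$ through the integral.
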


\begin{proof} Suppose $f \in \mathcal{B}_\alpha$ and $z \in \mathbb{D}$, then
\begin{align}
 |f(z)-f(0)| &= \bigg|z\int_0^1 f'(zt)dt\bigg| \leq |z|\int_0^1 |f'(zt)|dt. \nonumber
\end{align}
By using the definition of $\alpha$-Bloch space, we have
\begin{align}
\label{1eq2.1}  |f(z)-f(0)| &\leq |z|\|f\|_{\mathcal{B}_\alpha} \int_0^1 \frac{1}{(1-|z|^2 t^2)^{\alpha}}dt.
\end{align}
Since $(1+|z|t)\geq 1$, we obtain
\begin{align}                         
 \label{1eq2.2} |f(z)-f(0)| &\leq |z|\|f\|_{\mathcal{B}_\alpha} \int_0^1 \frac{1}{(1-|z|t)^{\alpha}}dt \nonumber\\
 &\leq \|f\|_{\mathcal{B}_\alpha}\frac{1}{1-\alpha} \bigg(1-\frac{1}{(1-|z|)^{\alpha-1}}\bigg).
\end{align}
We now complete the proofs of (i)-(iii) as described below.

\subsection*{(i)} 
We notice that
\begin{align}
  |f(z)|&\leq|f(0)|+ \|f\|_{\mathcal{B}_\alpha}\frac{1}{1-\alpha} \big(1-(1-|z|)^{1-\alpha}\big).  \nonumber
\end{align}
Since $1-(1-|z|)^{1-\alpha}\leq1$, we obtain
\begin{align}
  \label{1eq2.3} |f(z)|&\leq|f(0)|+ \|f\|_{\mathcal{B}_\alpha}\frac{1}{1-\alpha}. 
\end{align}
\subsection*{(ii)} From \eqref{1eq2.1}, we estimate 
\begin{align}\label{1eq2.4}
 |f(z)|&\leq |f(0)|+\frac{\|f\|_{\mathcal{B}_1}}{2}\log \bigg(\frac{1+|z|}{1-|z|}\bigg).
\end{align}
\subsection*{(iii)} It easily follows that
\begin{align}
 |f(z)-f(0)|&\leq \|f\|_{\mathcal{B}_\alpha}\frac{1}{\alpha-1} \bigg(\frac{1}{(1-|z|)^{\alpha-1}}-1\bigg). \nonumber
\end{align}
By using triangle inequality, we finally obtain
\begin{align}\label{1eq2.5}
  |f(z)|&\leq |f(0)|+\frac{\|f\|_{\mathcal{B}_\alpha}}{\alpha-1} \bigg(\frac{1}{(1-|z|)^{\alpha-1}}-1\bigg).
\end{align}
This completes the proof of our lemma.
\end{proof}

For $f \in \mathcal{B}_\alpha^0$, $C_\beta(f)$ is an analytic function in $\mathbb{D}$ 
and $C_\beta{(f)(0)}=0$. Now, we have three consecutive theorems, which describe the boundedness of 
$\beta$-Ces\`{a}ro operators from $\mathcal{B}_\alpha^0$ to $\mathcal{B}_\alpha^0$
for three different restrictions on $\beta$.

\begin{theorem}\label{1theorem2.2}
The $\beta$-Ces\`{a}ro operator is a bounded linear operator from $\mathcal{B}_\alpha^0$ to $\mathcal{B}_\alpha^0$, for $\beta \leq \alpha <1$. 
\end{theorem}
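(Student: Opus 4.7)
The plan is to verify both that $C_\beta f$ lies in $\mathcal{B}_\alpha^0$ and that the induced map is bounded. Membership in $\mathcal{B}_\alpha^0$ rather than just in $\mathcal{B}_\alpha$ requires the extra check $C_\beta(f)(0)=0$; this is immediate from the definition \eqref{1eq1.1}, since an integral from $0$ to $0$ vanishes. So the main task is to produce a constant $K=K(\alpha,\beta)$ with $\|C_\beta f\|_{\mathcal{B}_\alpha}\le K\|f\|_{\mathcal{B}_\alpha}$ for all $f\in\mathcal{B}_\alpha^0$.

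Since $f(0)=0$, the integrand in \eqref{1eq1.1} has a removable singularity at $w=0$, so $C_\beta f$ is analytic on $\mathbb{D}$ and $(C_\beta f)'(z) = f(z)/\bigl(z(1-z)^\beta\bigr)$. I would then split the weight factor into two pieces and bound each:
\begin{equation*}
(1-|z|^2)^\alpha |(C_\beta f)'(z)| \;=\; \biggl|\frac{f(z)}{z}\biggr|\cdot\frac{(1-|z|^2)^\alpha}{|1-z|^\beta}.
\end{equation*}
For the first factor, because $f(0)=0$ and (by Lemma \ref{1lemma2.1}(i) with $\alpha<1$) $f$ is bounded on $\mathbb{D}$, the Schwarz lemma applied to $f/\|f\|_\infty$ gives $|f(z)/z|\le\|f\|_\infty$ on $\mathbb{D}$. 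The same Lemma \ref{1lemma2.1}(i), together with $f(0)=0$, yields $\|f\|_\infty \le \|f\|_{\mathcal{B}_\alpha}/(1-\alpha)$.

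For the second factor I would split on the sign of $\beta$. If $\beta\le 0$, then $|1-z|^{-\beta}\le 2^{-\beta}$ and $(1-|z|^2)^\alpha\le 1$, giving an obvious bound. If $0<\beta\le\alpha<1$, I use $|1-z|\ge 1-|z|$ and $(1-|z|^2)^\alpha\le 2^\alpha(1-|z|)^\alpha$ to get
\begin{equation*}
\frac{(1-|z|^2)^\alpha}{|1-z|^\beta} \;\le\; 2^\alpha(1-|z|)^{\alpha-\beta} \;\le\; 2^\alpha,
\end{equation*}
where the final inequality uses $\alpha-\beta\ge 0$. Combining these estimates yields a uniform bound of the form $K\|f\|_{\mathcal{B}_\alpha}$. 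Finally, since $C_\beta f$ is analytic on $\mathbb{D}$, satisfies the Bloch-type bound just derived, and vanishes at the origin, it lies in $\mathcal{B}_\alpha^0$, completing the argument. The step I expect to require the most care is the case split on $\beta$, since the inequality $|1-z|\ge 1-|z|$ goes in the useful direction only when $\beta\ge 0$; the constraint $\beta\le\alpha$ is exactly what prevents a blow-up of $(1-|z|)^{\alpha-\beta}$ as $|z|\to 1$.
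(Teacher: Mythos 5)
Your argument is correct and delivers the statement with an explicit constant, but it takes a slightly different route from the paper at one key point: the control of the factor $f(z)/z$ near the origin. The paper keeps the refined estimate from the proof of Lemma \ref{1lemma2.1} (inequality \eqref{1eq2.2} with $f(0)=0$), namely $|f(z)|\le \frac{\|f\|_{\mathcal{B}_\alpha}}{1-\alpha}\bigl(1-(1-|z|)^{1-\alpha}\bigr)$, and then uses the elementary inequality $1-(1-|z|)^{1-\alpha}\le|z|$ (valid for $\alpha<1$) to cancel the $1/|z|$ singularity; you instead use the cruder bound $\|f\|_\infty\le\|f\|_{\mathcal{B}_\alpha}/(1-\alpha)$ coming from \eqref{1eq2.3} and invoke the Schwarz lemma to get $|f(z)/z|\le\|f\|_\infty$. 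Both devices are legitimate and give comparable constants; yours trades the sharper pointwise growth information for a standard classical lemma. The treatment of the weight $(1-|z|^2)^\alpha|1-z|^{-\beta}$ is essentially the same in both proofs ($|1-z|\ge 1-|z|$ together with $\alpha-\beta\ge 0$), except that you separate the case $\beta\le 0$, bounding $|1-z|^{-\beta}\le 2^{-\beta}$ there. This is actually a point the paper glosses over: its intermediate inequality replacing $|1-z|^{-\beta}$ by $(1-|z|)^{-\beta}$ is only valid for $\beta\ge 0$, even though the final supremum is finite for all $\beta\le\alpha$. So your case split is, if anything, slightly more careful than the published argument.
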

\begin{proof}
Suppose that $f \in \mathcal{B}_\alpha^0$, for $\alpha < 1$. From \eqref{1eq2.2}, we have 
\begin{align*}
 (1-|z|^2)^{\alpha} \bigg|\frac{f(z)}{z(1-z)^{\beta}}\bigg| &\leq \frac{(1+|z|)^{\alpha}}{|z|}\frac{\|f\|_{\mathcal{B}_\alpha}}{(1-\alpha)} \big[(1-|z|)^{\alpha-\beta}-(1-|z|)^{1-\beta}\big].
\end{align*}
For $\beta \leq \alpha <1$, this leads to
\begin{align*}
 (1-|z|^2)^{\alpha} \bigg|\frac{f(z)}{z(1-z)^{\beta}}\bigg| &\leq \frac{(1+|z|)^{\alpha}}{|z|}\frac{\|f\|_{\mathcal{B}_\alpha}}{(1-\alpha)}(1-|z|)^{\alpha-\beta} \big[1-(1-|z|)^{1-\alpha}\big]. 
\end{align*}
Now for $\alpha < 1$, we have $1-(1-|z|)^{1-\alpha} \leq |z|$ and $z$ is arbitrary point here, therefore
$$
  \|C_\beta(f)\|_{\mathcal{B}_\alpha} \leq \sup\bigg \lbrace\frac{(1+|z|)^\alpha (1-|z|)^{\alpha-\beta}}{(1-\alpha)} :z\in \mathbb{D}\bigg \rbrace\|f\|_{\mathcal{B}_\alpha}.
$$
This concludes the proof.
\end{proof}

\begin{theorem}\label{1theorem2.3}
The $\beta$-Ces\`{a}ro operator is a bounded linear operator from $\mathcal{B}_\alpha^0$ to $\mathcal{B}_\alpha^0$, for  $\beta \leq 1<\alpha$. 
\end{theorem}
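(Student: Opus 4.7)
The plan is to mirror the structure of the proof of Theorem 2.2, but in the range $\alpha>1$ we must replace the bound from \eqref{1eq2.2} with the estimate from Lemma \ref{1lemma2.1}(iii), since \eqref{1eq2.2} is meaningful only for $\alpha<1$. As a first step I would observe that $C_\beta(f)$ is analytic on $\mathbb{D}$ with $C_\beta(f)(0)=0$ and
\[
(C_\beta(f))'(z)=\frac{f(z)}{z(1-z)^{\beta}},
\]
so the task is to show that
\[
(1-|z|^2)^{\alpha}\,\bigl|(C_\beta(f))'(z)\bigr|
=\frac{(1-|z|^2)^{\alpha}}{|z|\,|1-z|^{\beta}}\,|f(z)|
\]
is bounded on $\mathbb{D}$ by a constant multiple of $\|f\|_{\mathcal{B}_\alpha}$.

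Next I would apply Lemma \ref{1lemma2.1}(iii) together with $f(0)=0$ to get
\[
|f(z)|\le \frac{\|f\|_{\mathcal{B}_\alpha}}{\alpha-1}\bigl((1-|z|)^{-(\alpha-1)}-1\bigr),
\]
and then substitute and factor the resulting expression as
\[
(1-|z|^2)^{\alpha}\,\bigl|(C_\beta(f))'(z)\bigr|
\le \frac{\|f\|_{\mathcal{B}_\alpha}}{\alpha-1}\cdot
\underbrace{(1+|z|)^{\alpha}}_{\le 2^{\alpha}}\cdot
\underbrace{\frac{(1-|z|)}{|1-z|^{\beta}}}_{(\ast)}\cdot
\underbrace{\frac{1-(1-|z|)^{\alpha-1}}{|z|}}_{(\ast\ast)}.
\]
Each of the three factors must be shown to be bounded on $\mathbb{D}$ independently of $f$.

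For the factor $(\ast)$ I would split on the sign of $\beta$: when $\beta\le 0$, $|1-z|^{-\beta}=|1-z|^{|\beta|}\le 2^{|\beta|}$, so $(\ast)\le 2^{|\beta|}(1-|z|)\le 2^{|\beta|}$; when $0<\beta\le 1$, the trivial inequality $|1-z|\ge 1-|z|$ yields $(\ast)\le(1-|z|)^{1-\beta}\le 1$. Thus $(\ast)$ is uniformly bounded precisely because $\beta\le 1$. For the factor $(\ast\ast)$ I would note that the function $t\mapsto (1-(1-t)^{\alpha-1})/t$ extends continuously to $[0,1]$ (with value $\alpha-1$ at $t=0$ and $1$ at $t=1$), so $(\ast\ast)$ is bounded on $\mathbb{D}$; this is the point at which the $1/|z|$ singularity is absorbed. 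Combining the three bounds gives $\|C_\beta(f)\|_{\mathcal{B}_\alpha}\le C(\alpha,\beta)\,\|f\|_{\mathcal{B}_\alpha}$ and, since $C_\beta(f)(0)=0$, $C_\beta(f)\in\mathcal{B}_\alpha^0$, proving boundedness.

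The only mildly delicate step is the behaviour near $z=0$: the pointwise bound on $|f(z)|$ derived from Lemma \ref{1lemma2.1}(iii) does not make $f(z)/z$ manifestly bounded, so one really has to use the cancellation in $(\ast\ast)$ rather than just estimate $|f(z)|$ and $1/|z|$ separately. Once this observation is made, the remainder of the argument is a routine verification of boundedness of elementary functions of $|z|$.
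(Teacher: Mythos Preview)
Your proof is correct and follows the same overall strategy as the paper: apply Lemma~\ref{1lemma2.1}(iii) to bound $|f(z)|$, write $(1-|z|^2)^{\alpha}=(1+|z|)^{\alpha}(1-|z|)^{\alpha}$, and reduce to bounding elementary factors in $|z|$. The two proofs differ only in how the factors are controlled. For the $1/|z|$ singularity, the paper replaces $1-(1-|z|)^{\alpha-1}$ by $1-(1-|z|)^{\lceil\alpha\rceil}$ and then expands the latter via the binomial theorem to extract a factor of $|z|$ explicitly; your continuity argument for $t\mapsto (1-(1-t)^{\alpha-1})/t$ on $[0,1]$ achieves the same end more directly and avoids the somewhat artificial passage through $\lceil\alpha\rceil$. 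For the factor involving $|1-z|^{-\beta}$, the paper passes immediately to $(1-|z|)^{-\beta}$, which is only an upper bound when $\beta\ge 0$; your case split on the sign of $\beta$ is the cleaner way to cover $\beta\le 0$ as well. Either route yields the stated boundedness.
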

\begin{proof}
Suppose that $f \in \mathcal{B}_\alpha^0$, for $\alpha > 1$. From \eqref{1eq2.5}, we have 
\begin{align*}
 (1-|z|^2)^{\alpha} \bigg|\frac{f(z)}{z(1-z)^{\beta}}\bigg| &\leq \frac{(1+|z|)^{\alpha}}{|z|}\frac{\|f\|_{\mathcal{B}_\alpha}}{(\alpha-1)} \big[(1-|z|)^{1-\beta}-(1-|z|)^{\alpha-\beta}\big].
\end{align*}
If $\beta \leq 1<\alpha$, it leads to
\begin{align*}
 (1-|z|^2)^{\alpha} \bigg|\frac{f(z)}{z(1-z)^{\beta}}\bigg| &\leq \frac{(1+|z|)^{\alpha}}{|z|}\frac{\|f\|_{\mathcal{B}_\alpha}}{(\alpha-1)} (1-|z|)^{1-\beta}\big[1-(1-|z|)^{\alpha-1}\big].
 \end{align*}
Since $1-(1-|z|)^{\alpha-1} \leq 1-(1-|z|)^{\ceil{\alpha}}$, where $\ceil{.}$ is a Greatest Integer Function, we obtain
\begin{align*}
 (1-|z|^2)^{\alpha} \bigg|\frac{f(z)}{z(1-z)^{\beta}}\bigg| &\leq \frac{(1+|z|)^{\alpha}}{|z|}\frac{\|f\|_{\mathcal{B}_\alpha}}{(\alpha-1)} (1-|z|)^{1-\beta}\big[1-(1-|z|)^{\ceil{\alpha}}\big].
 \end{align*}
 Note that
 $$
  (1-|z|)^{\ceil{\alpha}}=\sum_{k=0}^{\ceil{\alpha}}{{\ceil{\alpha}}\choose{k}}(-|z|)^k.
 $$
Thus, we obtain
\begin{align*}
 (1-|z|^2)^{\alpha} \bigg|\frac{f(z)}{z(1-z)^{\beta}}\bigg| &\leq \frac{(1+|z|)^\alpha (1-|z|)^{1-\beta}}{(\alpha-1)} \sum_{k=1}^{\ceil{\alpha}} {{\ceil{\alpha}}\choose{k}}(-|z|)^{k-1}.
 \end{align*}
 Since $z$ is arbitrary point in $\mathbb{D}$, therefore we have
\begin{align*}
& \|C_\beta(f)\|_{\mathcal{B}_\alpha}\\
& \hspace{0.5cm}\leq \sup \bigg \lbrace\frac{(1+|z|)^\alpha (1-|z|)^{1-\beta}}{(\alpha-1)} 
\sum_{k=1}^{\ceil{\alpha}} {{{\ceil{\alpha}}}\choose{k}}(-|z|)^{k-1}:z\in \mathbb{D}\bigg \rbrace\|f\|_{\mathcal{B}_\alpha},
\end{align*}
which concludes the proof.
\end{proof}

For $\beta=1$, the $\beta$-Ces\`{a}ro operator is nothing but the Ces\`{a}ro operator. Thus, 
Theorem \ref{1theorem2.3} yields the following bounded property of the Ces\`{a}ro operator.
\begin{corollary}
 The Ces\`{a}ro operator is a bounded linear operator from $\mathcal{B}_\alpha^0$ to $\mathcal{B}_\alpha^0$, for $\alpha > 1$.
\end{corollary}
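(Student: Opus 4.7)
The plan is to observe that this corollary is simply the specialization $\beta = 1$ of Theorem~\ref{1theorem2.3}. Since the Ces\`aro operator is by definition $C_\beta$ with $\beta = 1$, and the hypothesis $\beta \leq 1 < \alpha$ of Theorem~\ref{1theorem2.3} is satisfied whenever $\alpha > 1$ (taking $\beta = 1$), there is nothing further to prove: the boundedness of $C_1 : \mathcal{B}_\alpha^0 \to \mathcal{B}_\alpha^0$ follows immediately.

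Concretely, I would write one sentence: ``Setting $\beta = 1$ in Theorem~\ref{1theorem2.3}, the hypothesis $\beta \leq 1 < \alpha$ reduces to $\alpha > 1$, and the resulting bounded linear operator $C_1$ is the Ces\`aro operator, which proves the claim.'' If desired, one can additionally record the explicit norm bound inherited from Theorem~\ref{1theorem2.3}, namely
\[
\|C_1(f)\|_{\mathcal{B}_\alpha} \leq \sup_{z \in \mathbb{D}} \frac{(1+|z|)^\alpha}{\alpha - 1} \sum_{k=1}^{\ceil{\alpha}} \binom{\ceil{\alpha}}{k} (-|z|)^{k-1}\, \|f\|_{\mathcal{B}_\alpha},
\]
after noting that the factor $(1-|z|)^{1-\beta}$ equals $1$ when $\beta = 1$, so the supremum is finite on $\mathbb{D}$.

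There is no obstacle to overcome here since all the analytic work has already been carried out in the proof of Theorem~\ref{1theorem2.3}; the only nontrivial issue in that theorem's proof was handling the term $1 - (1-|z|)^{\alpha - 1}$ for general real $\alpha > 1$ via the expansion with the ceiling function, and that estimate transfers verbatim to the $\beta = 1$ case.
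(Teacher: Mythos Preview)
Your proposal is correct and matches the paper's own treatment exactly: the paper states this corollary immediately after Theorem~\ref{1theorem2.3} with the remark that for $\beta=1$ the $\beta$-Ces\`aro operator is the Ces\`aro operator, so the corollary follows. No further argument is given or needed.
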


\begin{theorem}\label{1theorem2.4}
The $\beta$-Ces\`{a}ro operator is a bounded linear operator from $\mathcal{B}_\alpha^0$ to $\mathcal{B}_\alpha^0$, for $\beta<\alpha=1$. 
\end{theorem}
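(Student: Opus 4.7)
The approach mirrors the proofs of Theorems \ref{1theorem2.2} and \ref{1theorem2.3}: bound $|f(z)|$ by the appropriate part of Lemma \ref{1lemma2.1}, substitute it into $|C_\beta(f)'(z)|=|f(z)|/|z(1-z)^\beta|$, multiply by $(1-|z|^2)^\alpha$ with $\alpha=1$, and then show that the resulting function of $|z|$ is bounded on $[0,1)$. The key computational input is part \textbf{(ii)} of Lemma \ref{1lemma2.1}, which for $f\in\mathcal{B}_1^0$ gives
\[
|f(z)| \leq \frac{\|f\|_{\mathcal{B}_1}}{2}\log\!\left(\frac{1+|z|}{1-|z|}\right),
\]
since $f(0)=0$.

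Plugging this into the derivative of $C_\beta(f)$ yields
\[
(1-|z|^2)\left|\frac{f(z)}{z(1-z)^\beta}\right|
\;\leq\;\frac{\|f\|_{\mathcal{B}_1}}{2}\cdot \frac{(1+|z|)(1-|z|)^{1-\beta}}{|z|}\,\log\!\left(\frac{1+|z|}{1-|z|}\right).
\]
Denote the $z$-dependent factor by $\Phi(|z|)$. The plan is then to argue that $\Phi$ extends to a continuous function on $[0,1]$, whence $\sup_{\mathbb{D}}\Phi<\infty$ and we obtain $\|C_\beta f\|_{\mathcal{B}_1}\lesssim\|f\|_{\mathcal{B}_1}$.

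The only subtlety lies at the two endpoints. Near $|z|=0$ the factor $1/|z|$ looks singular, but the series expansion $\log\!\big(\frac{1+x}{1-x}\big)=2\sum_{k\ge 0}\frac{x^{2k+1}}{2k+1}$ shows $\frac{1}{x}\log\!\big(\frac{1+x}{1-x}\big)\to 2$ as $x\to 0^+$, so $\Phi$ is bounded (in fact continuous) at the origin. Near $|z|=1$ we must control $(1-|z|)^{1-\beta}\log\!\big(\frac{1+|z|}{1-|z|}\big)$; since the hypothesis gives $1-\beta>0$, a power decay beats the logarithmic growth and the expression tends to $0$. Thus $\Phi$ is continuous on the closed interval $[0,1]$, hence bounded by some constant $M$, and we conclude
\[
\|C_\beta(f)\|_{\mathcal{B}_1}\;\leq\;\sup_{z\in\mathbb{D}}\Phi(|z|)\cdot\frac{\|f\|_{\mathcal{B}_1}}{2}\;\leq\;\frac{M}{2}\,\|f\|_{\mathcal{B}_1}.
\]

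The main (mild) obstacle is simply the endpoint analysis: unlike Theorem \ref{1theorem2.2}, where the factor $[1-(1-|z|)^{1-\alpha}]$ cancels the $1/|z|$ cleanly, here the logarithmic term forces us to invoke a limiting argument at $z=0$ and the dominance of the power $(1-|z|)^{1-\beta}$ over the log at $|z|=1$, both of which hinge on the strict inequality $\beta<1$.
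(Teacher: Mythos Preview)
Your proof is correct and follows essentially the same approach as the paper: apply part (ii) of Lemma \ref{1lemma2.1} to bound $|f(z)|$, insert this into $(1-|z|^2)\,|f(z)/(z(1-z)^\beta)|$, and reduce to a one-variable supremum. The paper simply writes
\[
\|C_\beta(f)\|_{\mathcal{B}_1}\;\leq\;\sup_{z\in\mathbb{D}}\frac{(1-|z|)^{1-\beta}}{|z|}\log\!\left(\frac{1+|z|}{1-|z|}\right)\|f\|_{\mathcal{B}_1}
\]
and stops, whereas you go further and actually verify the finiteness of this supremum by checking the endpoint behaviour at $|z|=0$ and $|z|=1$; this is a welcome addition, since the paper leaves that step to the reader.
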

\begin{proof}
Suppose that $f \in \mathcal{B}_\alpha^0$, for $\alpha=1$. From \eqref{1eq2.4}, we obtain 
\begin{align*}
 (1-|z|^2) \bigg|\frac{f(z)}{z(1-z)^{\beta}}\bigg| &\leq \frac{(1-|z|^2)}{|z|(1-|z|)^\beta}\frac{\|f\|_{\mathcal{B}_1}}{2}\log \bigg(\frac{1+|z|}{1-|z|}\bigg).
\end{align*}
For $\beta<\alpha=1$, we have
\begin{align*}
 (1-|z|^2) \bigg|\frac{f(z)}{z(1-z)^{\beta}}\bigg| &\leq \frac{(1-|z|^2)}{2|z|(1-|z|)^\beta}\log \bigg(\frac{1+|z|}{1-|z|}\bigg)\|f\|_{\mathcal{B}_1}.
\end{align*}
Since $z$ is arbitrary point, we obtain that
$$
  \|C_\beta(f)\|_{\mathcal{B}_1} \leq \sup\bigg \lbrace \frac{(1-|z|)^{1-\beta}}{|z|}\log \bigg(\frac{1+|z|}{1-|z|}\bigg):z\in \mathbb{D}\bigg \rbrace\|f\|_{\mathcal{B}_1},
$$
completing the proof.
\end{proof}
As an immediate consequence of Theorems \ref{1theorem2.2}, \ref{1theorem2.3} and \ref{1theorem2.4}, we can easily prove 
the following corollary.
\begin{corollary}\label{1corollary2.2}
The generalized $\beta$-Ces\`{a}ro operator is a bounded linear operator from $\mathcal{B}_\alpha^0$ to $\mathcal{B}_\alpha^0$, either  for $\beta \leq \alpha<1$ or $\beta \leq 1<\alpha$ or $\beta<\alpha=1$. 
\end{corollary}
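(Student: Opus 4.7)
The plan is to decompose $C_{g_\beta}$ into pieces each of which reduces, by rotation, to an operator already covered by Theorems~\ref{1theorem2.2}, \ref{1theorem2.3}, and \ref{1theorem2.4}. Using linearity of the integral I write
$$C_{g_\beta}(f)(z) \;=\; \sum_{j=1}^{k} a_j \int_0^z \frac{f(w)}{w(1-b_j w)^\beta}\,dw \;+\; \int_0^z \frac{f(w)\, h(w)}{w}\,dw,$$
and denote the $j$-th summand by $T_j(f)$ and the last integral by $S(f)$. Boundedness of each $T_j$ and of $S$ separately, together with the triangle inequality, will give the conclusion; the fact $C_{g_\beta}(f)(0)=0$ is immediate.

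For each $T_j$, I would perform the substitution $u=b_j w$. Since $|b_j|=1$, this yields $T_j(f)(z)=C_\beta(f_j)(b_j z)$ with $f_j(u):=f(u/b_j)$. The maps $f\mapsto f_j$ and $\psi\mapsto \psi(b_j\,\cdot)$ are isometries of $\mathcal{B}_\alpha^0$ under $\|\cdot\|_{\mathcal{B}_\alpha}$, as one sees by a change of variables in the supremum defining the seminorm (both $|u/b_j|$ and $|b_j z|$ coincide with $|u|$ and $|z|$, and $|f'_j(u)|=|f'(u/b_j)|$). Hence
$$\|T_j(f)\|_{\mathcal{B}_\alpha} \;=\; \|C_\beta(f_j)\|_{\mathcal{B}_\alpha} \;\le\; \|C_\beta\|_{\mathrm{op}}\,\|f_j\|_{\mathcal{B}_\alpha} \;=\; \|C_\beta\|_{\mathrm{op}}\,\|f\|_{\mathcal{B}_\alpha},$$
where the boundedness of $C_\beta$ comes from whichever of Theorems~\ref{1theorem2.2}--\ref{1theorem2.4} matches the assumed range of $(\alpha,\beta)$.

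For $S(f)$, note that $S(f)'(z)=h(z)\cdot (f(z)/z)=h(z)\cdot C_0(f)'(z)$. With $M:=\sup_{z\in\mathbb{D}}|h(z)|<\infty$, the estimate $(1-|z|^2)^\alpha |S(f)'(z)|\le M\,(1-|z|^2)^\alpha|C_0(f)'(z)|$ gives $\|S(f)\|_{\mathcal{B}_\alpha}\le M\|C_0\|_{\mathrm{op}}\|f\|_{\mathcal{B}_\alpha}$. Crucially, $\beta=0$ lies in the bounded range under all three hypotheses of the corollary because $\alpha>0$: if $\alpha<1$ then $0\le\alpha<1$; if $\alpha>1$ then $0\le 1<\alpha$; and if $\alpha=1$ then $0<1$. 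Therefore Theorems~\ref{1theorem2.2}--\ref{1theorem2.4} do apply to $C_0$, giving $\|C_0\|_{\mathrm{op}}<\infty$.

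Adding the two bounds yields $\|C_{g_\beta}(f)\|_{\mathcal{B}_\alpha}\le\bigl(\sum_j |a_j|\,\|C_\beta\|_{\mathrm{op}} + M\|C_0\|_{\mathrm{op}}\bigr)\|f\|_{\mathcal{B}_\alpha}$, which completes the proof. The only mildly delicate step I anticipate is making the rotational invariance of $\|\cdot\|_{\mathcal{B}_\alpha}$ fully explicit; everything else is a routine reduction to the three preceding theorems.
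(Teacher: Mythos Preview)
Your argument is correct and follows the same overall strategy as the paper: decompose $C_{g_\beta}$ according to the decomposition of $g_\beta$, then bound each piece by appealing to Theorems~\ref{1theorem2.2}--\ref{1theorem2.4}. The one genuine difference is in how you handle the pieces $\int_0^z f(w)\,w^{-1}(1-b_jw)^{-\beta}\,dw$. The paper uses the pointwise inequality $|1-b_jz|\ge 1-|z|$ to replace $(1-b_jz)^{-\beta}$ by $(1-|z|)^{-\beta}$ and then literally re-runs the estimates from the proofs of those theorems; you instead substitute $u=b_jw$ and invoke rotational invariance of $\|\cdot\|_{\mathcal{B}_\alpha}$ to reduce exactly to the operator $C_\beta$ and quote its operator norm. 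Your route is slightly cleaner, since it avoids redoing any pointwise work and treats negative $\beta$ uniformly without needing a separate observation that $(1-b_jz)^{-\beta}$ is bounded in that case.

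One small bookkeeping slip: since you defined $T_j$ to include the scalar $a_j$, the substitution actually gives $T_j(f)(z)=a_j\,C_\beta(f_j)(b_jz)$, so the displayed equality $\|T_j(f)\|_{\mathcal{B}_\alpha}=\|C_\beta(f_j)\|_{\mathcal{B}_\alpha}$ is off by $|a_j|$. Your final bound with $\sum_j|a_j|$ is nonetheless correct, so this is purely cosmetic.
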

\begin{proof} Let $f \in \mathcal{B}_\alpha^0$ and consider the generalized $\beta$-Ces\`{a}ro operator, either for $\beta \leq \alpha<1$ or $\beta \leq 1<\alpha$ or $\beta<\alpha=1$.
Then from \eqref{1eq1.2}, we obtain
 \begin{align*}
(1-|z|^2)^\alpha \bigg|\frac{f(z)g_{\beta}(z)}{z}\bigg|&=(1-|z|^2)^\alpha\bigg|\frac{f(z)}{z} \bigg(\sum_{j=1}^{k}\frac{a_j}{{(1-b_{j} z)}^{\beta}}+h(z)\bigg) \bigg|.
 \end{align*}
 By triangle inequality
 \begin{align*}
(1-|z|^2)^\alpha \bigg|\frac{f(z)g_{\beta}(z)}{z}\bigg|\leq& \sum_{j=1}^{k} |a_k| (1-|z|^2)^\alpha \frac{|f(z)|}{|z|(1-|z|)^\beta} 
\\&+ \|h\|_\infty(1-|z|^2)^\alpha \frac{|f(z)|}{|z|}\\
=& (1-|z|^2)^\alpha \frac{|f(z)|}{|z|(1-|z|)^\beta}\sum_{j=1}^{k} |a_k| 
\\&+ \|h\|_\infty(1-|z|^2)^\alpha \frac{|f(z)|}{|z|},
 \end{align*}
 where $\|\cdot\|_\infty$ stands for the classical sup norm.
From here we can further proceed as in the proof of Theorems \ref{1theorem2.2}, \ref{1theorem2.3} and \ref{1theorem2.4} according
to the condition on $\alpha$ and $\beta$.
\end{proof}
 From the statement of Theorems \ref{1theorem2.2}, \ref{1theorem2.3} and \ref{1theorem2.4}
 we can conclude that the Alexander operator is a bounded linear operator 
 from $\mathcal{B}_\alpha^0$ to $\mathcal{B}_\alpha^0$. But in addition, we discuss the exact operator
 norm for the Alexander operator from $\mathcal{B}_\alpha^0$ to $\mathcal{B}_\alpha^0$ in the following Theorem.
\begin{theorem}
 The Alexander operator is a bounded linear operator from $\mathcal{B}_\alpha^0$ to $\mathcal{B}_\alpha^0$ with operator norm $1$. 
\end{theorem}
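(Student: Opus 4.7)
The plan is to establish the upper bound $\|C_0(f)\|_{\mathcal{B}_\alpha} \leq \|f\|_{\mathcal{B}_\alpha}$ for every $f \in \mathcal{B}_\alpha^0$, and then exhibit an explicit function that attains equality. Since the Alexander operator is $C_0(f)(z) = \int_0^z (f(w)/w)\,dw$, boundedness is already a consequence of Theorems \ref{1theorem2.2}, \ref{1theorem2.3}, \ref{1theorem2.4}, so the real content of the theorem is pinning down the constant as exactly $1$.

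For the upper bound, I would proceed directly from the identity
\begin{equation*}
\frac{f(z)}{z} = \int_0^1 f'(tz)\,dt,
\end{equation*}
which is valid for $f \in \mathcal{B}_\alpha^0$ since $f(0)=0$. Taking moduli and applying the $\mathcal{B}_\alpha$-seminorm bound $(1-|tz|^2)^\alpha |f'(tz)| \leq \|f\|_{\mathcal{B}_\alpha}$ for each $t \in [0,1]$, together with the monotonicity $(1-|tz|^2) \geq (1-|z|^2)$ when $t \leq 1$, yields
\begin{equation*}
\bigg|\frac{f(z)}{z}\bigg| \leq \int_0^1 \frac{\|f\|_{\mathcal{B}_\alpha}}{(1-|tz|^2)^\alpha}\,dt \leq \frac{\|f\|_{\mathcal{B}_\alpha}}{(1-|z|^2)^\alpha}.
\end{equation*}
Rearranging gives $(1-|z|^2)^\alpha |(C_0 f)'(z)| \leq \|f\|_{\mathcal{B}_\alpha}$ for every $z \in \mathbb{D}$, so $\|C_0(f)\|_{\mathcal{B}_\alpha} \leq \|f\|_{\mathcal{B}_\alpha}$ and the operator norm is at most $1$.

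For the matching lower bound, the natural test function is $f(z) = z$, which lies in $\mathcal{B}_\alpha^0$ with $\|f\|_{\mathcal{B}_\alpha} = \sup_{z\in\mathbb{D}}(1-|z|^2)^\alpha = 1$. A direct computation shows $C_0(f)(z) = \int_0^z dw = z = f(z)$, so $\|C_0(f)\|_{\mathcal{B}_\alpha} = 1$, confirming that the operator norm equals $1$.

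I do not anticipate a genuine obstacle here: the only subtlety is making sure the elementary inequality $(1-|tz|^2) \geq (1-|z|^2)$ for $t \in [0,1]$ is invoked correctly inside the integral, after which the rest is immediate. The argument is essentially a Schwarz-type observation exploiting $f(0)=0$, and it works uniformly in $\alpha > 0$ without needing to split into the three cases used earlier for general $\beta$.
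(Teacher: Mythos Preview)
Your proposal is correct and follows essentially the same route as the paper: both use the integral representation $f(z)/z=\int_0^1 f'(tz)\,dt$, bound $|f'(tz)|$ via the $\mathcal{B}_\alpha$-seminorm, exploit the elementary inequality $(1-|tz|^2)\ge(1-|z|^2)$ for $t\in[0,1]$ to conclude $\|C_0(f)\|_{\mathcal{B}_\alpha}\le\|f\|_{\mathcal{B}_\alpha}$, and then test against $f(z)=z$ to obtain equality.
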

\begin{proof}
Suppose that $f \in \mathcal{B}_\alpha^0$. Then we estimate
\begin{align*}
 |f(z)|      &= \bigg|z\int_0^1 f'(zt)dt\bigg| \leq |z|\int_0^1 |f'(zt)|dt.
\end{align*}
Multipling by $(1-|z|^2)^{\alpha}$ on both sides, we have
\begin{align*}
(1-|z|^2)^{\alpha} |f(z)| &\leq |z|(1-|z|^2)^{\alpha}\int_0^1 \frac{(1-|z|^2|t|^2)^{\alpha}}{(1-|z|^2|t|^2)^{\alpha}}|f'(zt)|dt.  
\end{align*}
By the definition of $\alpha$-Bloch space, we estimate
\begin{align*}
 (1-|z|^2)^{\alpha} |f(z)| &\leq |z|\|f\|_{\mathcal{B}_\alpha} \int_0^1 \frac{(1-|z|^2)^{\alpha}}{(1-|z|^2|t|^2)^{\alpha}}dt.
\end{align*}
Since $ (1-|z|^2)\leq (1-|z|^2|t|^2)$, we obtain
\begin{align*}             
(1-|z|^2)^{\alpha} \bigg|\frac{d}{dz}\int_0^z\frac{f(t)}{t}dt\bigg| &\leq \|f\|_{\mathcal{B}_\alpha}.
\end{align*}
Here $z$ is arbitrary point in $\mathbb{D}$, therefore
$$
 \|C_0(f)\|_{\mathcal{B}_\alpha}\leq \|f\|_{\mathcal{B}_\alpha}.
$$

If we choose the identity function $f(z)=z$, then we obtain the exact operator norm $1$, equivalently, we say that $\|C_0\|_{\mathcal{B}_\alpha} = 1$.
\end{proof}

\subsection*{Counterexamples}
We just proved that for either of the cases $\beta \leq \alpha<1$, $\beta \leq 1<\alpha$ and $\beta<\alpha=1$,
the $\beta$-Ces\`{a}ro operator is a bounded linear operator from $\mathcal{B}_{\alpha}^0$ to $\mathcal{B}_{\alpha}^0$.
We now show that for the remaining cases: $\beta>\alpha$, $\beta=\alpha\geq1$ and $1<\beta<\alpha$,
the $\beta$-Ces\`{a}ro operators need not be bounded as the following counterexamples show.
\begin{example}\label{1ex2.6} Let $f(z)=z$, then $f\in \mathcal{B}_\alpha^0 $, for $\beta > \alpha$ and we have
\begin{align*}
(1-|z|^2)^{\alpha} \bigg|\frac{z}{z(1-z)^\beta}\bigg|&=(1+|z|)^{\alpha} \frac{(1-|z|)^{\alpha}}{|(1-z)|^\beta}.
\end{align*}
For $z=t\in (0,1)$, we obtain
\begin{align*}
(1-|z|^2)^{\alpha} \bigg|\frac{z}{z(1-z)^\beta}\bigg|&=(1+t)^{\alpha} \frac{(1-t)^{\alpha}}{(1-t)^\beta}= \frac{(1+t)^{\alpha}}{(1-t)^{\beta-\alpha}}.
\end{align*}
As $t$ tends to $1$, right hand side term tends to $\infty$. Therefore, the $\beta$-Ces\`{a}ro operator is an unbounded linear operator from $\mathcal{B}_{\alpha}^0$ to $\mathcal{B}_{\alpha}^0 $, for $\beta > \alpha$.
\end{example}
\begin{example}\label{1ex2.7} Let $f(z)=\text{Log}(1-z)$, where a principal value of the branch of lograthm is chosen. Then $f\in \mathcal{B}_{\alpha}^0 $, for $\alpha\geq1$ and for $z=t \in (0,1)$, we have
\begin{align*}
(1-|z|^2)^{\alpha} \bigg|\frac{\log(1-z)}{z(1-z)^{\beta}}\bigg|&=(1-t^2)^{\alpha} \bigg|\frac{\log(1-t)}{t(1-t)^{\beta}}\bigg|
                        = \frac{(1+t)^{\alpha}}{(1-t)^{\beta-\alpha}}\bigg|\frac{\log(1-t)}{t}\bigg|.
\end{align*}
Then for $\beta\geq \alpha$, as $t$ tends to $1$, right hand side term diverges to $\infty$.
Therefore, the $\beta$-Ces\`{a}ro operator is an unbounded linear operator from $\mathcal{B}_{\alpha}^0$ to $\mathcal{B}_{\alpha}^0$, for $\beta \geq \alpha\geq1$.
\end{example}
\begin{remark}
  By using the conclusion of Examples $\ref{1ex2.6}$ and $\ref{1ex2.7}$, we are able to conclude that, 
the Ces\`{a}ro operator is an unbounded linear operator from $\mathcal{B}_{\alpha}^0 $ to $\mathcal{B}_{\alpha}^0 $, for $\alpha \leq1$.
\end{remark}

\begin{example}\label{1ex2.8} Let $f(z)=z/(1-z)^\alpha$, for $\alpha > 0$, then $f\in \mathcal{B}_{\alpha+1}^0 $ and we have
\begin{align*}
(1-|z|^2)^{\alpha+1} \bigg|\frac{z}{z(1-z)^{\alpha+\beta}}\bigg|&=(1-|z|^2)^{\alpha+1} \frac{1}{|(1-z)|^{\alpha+\beta}}.
\end{align*}
For $z=t \in (0,1)$, then it yields
\begin{align*}
(1-|z|^2)^{\alpha+1} \bigg|\frac{z}{z(1-z)^{\alpha+\beta}}\bigg|&=(1+t)^{\alpha+1} \frac{(1-t)^{\alpha+1}}{(1-t)^{\alpha+\beta}}= \frac{(1+t)^{\alpha+1}}{(1-t)^{\beta-1}}.
\end{align*}
Then for $\beta>1$, as $t$ tends to $1$, right hand side term approaches to $\infty$. 
Therefore, the $\beta$-Ces\`{a}ro operator is unbounded linear operator from $\mathcal{B}_{\alpha+1}^0$ to $\mathcal{B}_{\alpha+1}^0$, for $\beta > 1$.
\end{example}

\section{\textbf{Compactness of the $\beta$-Ces\`{a}ro operators}}\label{1sec3}
In this section, we discuss compactness of the $\beta$-Ces\`{a}ro operators, for $\beta<\alpha<1$, $\beta<1<\alpha$ and $\beta<\alpha=1$, and for its generalization with the help of Lemma \ref{1lemma3.2}. 
However, the same problem for the cases $\beta=\alpha<1$ and $\beta=1<\alpha$ will be investigated in the next section.
Before going to the equivalent condition for compactness of the generalized $\beta$-Ces\`{a}ro operators (Lemma \ref{1lemma3.2}),
we have the following Lemma, which is used to prove the subsequent lemma.   

\begin{lemma}\label{1lemma3.1}
The generalized $\beta$-Ces\`{a}ro operators mapping from $\mathcal{B}_\alpha^0$ to $\mathcal{B}_\alpha^0$ are continuous linear operators in the topology of uniform 
convergence on every compact subset of $\mathbb{D}$.	
\end{lemma}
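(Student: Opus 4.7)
Linearity of $C_{g_\beta}$ is immediate from its integral definition, so the substance of the lemma is continuity with respect to the compact-open topology. Since that topology is metrizable (generated by the uniform norms on an exhausting sequence of compacta), it is enough to argue with sequences, and by linearity the task reduces to showing: if $\{f_n\}\subset\mathcal{B}_\alpha^0$ converges to $0$ uniformly on every compact subset of $\mathbb{D}$, then $C_{g_\beta}(f_n)\to 0$ uniformly on every compact subset of $\mathbb{D}$.

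Fix a compact set $K\subset\mathbb{D}$ and choose $r\in(0,1)$ with $K\subset\overline{D(0,r)}$. Two estimates do the work. First, because each $b_j$ lies on the unit circle, $|1-b_jw|\ge 1-r>0$ for $w\in\overline{D(0,r)}$; together with the boundedness of $h$ on $\mathbb{D}$ this produces a finite constant $M_r$ with $|g_\beta(w)|\le M_r$ throughout $\overline{D(0,r)}$, covering both signs of $\beta$. Second, the condition $f_n(0)=0$ forces $\phi_n(w):=f_n(w)/w$ to extend to an analytic function on $\mathbb{D}$; applying the maximum modulus principle on a slightly larger disk $\overline{D(0,r')}$ with $r<r'<1$ yields
\begin{equation*}
\sup_{|w|\le r}|\phi_n(w)|\;\le\;\frac{1}{r'}\sup_{|w|=r'}|f_n(w)|\;\longrightarrow\;0\qquad(n\to\infty),
\end{equation*}
since $f_n\to 0$ uniformly on the compactum $\{|w|=r'\}$.

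Parameterizing the integration path from $0$ to $z\in K$ by $w=tz$ with $t\in[0,1]$, the change of variables gives
\begin{equation*}
|C_{g_\beta}(f_n)(z)|\;=\;\left|\int_0^1\phi_n(tz)\,g_\beta(tz)\,z\,dt\right|\;\le\;r\,M_r\sup_{|w|\le r}|\phi_n(w)|,
\end{equation*}
and the right-hand side tends to $0$ uniformly in $z\in K$, completing the proof. The only points demanding any care are the boundedness of $g_\beta$ away from the unit circle (where the singularities of $g_\beta$ sit) and recognizing $f_n/w$ as holomorphic via $f_n(0)=0$; both are handled by the observations above, so I do not anticipate a genuine obstacle beyond this bookkeeping.
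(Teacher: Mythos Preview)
Your proof is correct. The overall structure matches the paper's: reduce to showing that $f_n(z)/z\to 0$ uniformly on compacta and then estimate the integral defining $C_{g_\beta}$. The one substantive difference is in how that quotient is controlled. The paper invokes the Weierstrass convergence theorem to pass from $f_m\to f$ to $f_m'\to f'$ uniformly on compacta, and then writes
\[
\frac{f_m(z)-f(z)}{z}=\int_0^1\bigl(f_m'(zt)-f'(zt)\bigr)\,dt
\]
to conclude $f_m(z)/z\to f(z)/z$ uniformly on compact sets. You instead use the removable singularity at $0$ together with the maximum modulus principle on a slightly larger disk $\overline{D(0,r')}$ to bound $\sup_{|w|\le r}|f_n(w)/w|$ directly by $(1/r')\sup_{|w|=r'}|f_n(w)|$. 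Your route avoids appealing to the derivative convergence theorem and is arguably more self-contained; the paper's route is a touch shorter once Weierstrass is granted. You are also more explicit than the paper about why $g_\beta$ is bounded on $\overline{D(0,r)}$ (the singularities sit on $|w|=1$), which the paper leaves implicit. Either way, the argument goes through without difficulty.
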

\begin{proof}
Let $f_m$ be a sequence in $\mathcal{B}_\alpha^0$ which converges to $f$ uniformly on every compact subset of $\mathbb{D}$.
By the Weierstrass theorem for sequences, $f'_m$ converges to $f'$ uniformly on every compact subset of $\mathbb{D}$.
On the other hand, we have
\begin{equation}\label{1eq3.1}
\bigg|\frac{f_m(z)-f(z)}{z}\bigg| \leq \int_0^1 |f'_m(zt)-f'(zt)|dt.
\end{equation}
 Consequently, $f_m(z)/z$ converges to $f(z)/z$ uniformly on every compact subset of $\mathbb{D}$,
 which implies that $C_{g_\beta} (f_m)$ converges to $C_{g_\beta} (f)$ uniformly on every compact subset of $\mathbb{D}$.  
\end{proof}
   
\begin{lemma}\label{1lemma3.2}
	The generalized $\beta$-Ces\`{a}ro operators mapping from $\mathcal{B}_\alpha^0$ to $\mathcal{B}_\alpha^0$ are compact if and only if for every bounded sequence 
	$(f_m)$ in $\mathcal{B}_\alpha^0$ which converges to $0$ uniformly on every compact subset of $\mathbb{D}$ we have $\lim\limits_{m\rightarrow \infty}\|C_{g_\beta} f_m\|_{\mathcal{B}_\alpha}=0$.	
\end{lemma}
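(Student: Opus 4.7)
My plan is to follow the standard compactness-via-normal-families argument that works for operators on spaces of analytic functions, relying on Lemma \ref{1lemma2.1} for pointwise control by the $\alpha$-Bloch norm, Lemma \ref{1lemma3.1} for continuity in the compact-open topology, and Montel's theorem to extract locally uniformly convergent subsequences from bounded sequences in $\mathcal{B}_\alpha^0$.

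For the necessity direction, I assume $C_{g_\beta}$ is compact and take a bounded sequence $(f_m)$ in $\mathcal{B}_\alpha^0$ that converges to $0$ uniformly on compact subsets of $\mathbb{D}$. Suppose, for contradiction, that $\|C_{g_\beta} f_m\|_{\mathcal{B}_\alpha}\not\to 0$. Then there exists $\varepsilon>0$ and a subsequence $(f_{m_k})$ with $\|C_{g_\beta} f_{m_k}\|_{\mathcal{B}_\alpha}\geq \varepsilon$. Compactness of $C_{g_\beta}$ applied to this still-bounded sequence produces a further subsequence (again denoted $(f_{m_k})$) such that $C_{g_\beta} f_{m_k}\to g$ in $\|\cdot\|_{\mathcal{B}_\alpha}$ for some $g\in \mathcal{B}_\alpha^0$. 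Applying Lemma \ref{1lemma2.1} to the differences $C_{g_\beta} f_{m_k}-g$ (which all vanish at the origin) converts norm convergence into uniform convergence on compact subsets of $\mathbb{D}$. On the other hand, Lemma \ref{1lemma3.1} gives $C_{g_\beta} f_{m_k}\to C_{g_\beta}(0)=0$ in the same compact-open topology. Uniqueness of limits forces $g\equiv 0$, contradicting $\|g\|_{\mathcal{B}_\alpha}\geq \varepsilon$.

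For the sufficiency direction, I assume the sequential condition and take any bounded sequence $(f_m)$ in $\mathcal{B}_\alpha^0$. Lemma \ref{1lemma2.1} shows that $\{f_m\}$ is uniformly bounded on each compact subset of $\mathbb{D}$, so Montel's theorem yields a subsequence $(f_{m_k})$ converging, uniformly on compacts, to an analytic function $f$ with $f(0)=\lim f_{m_k}(0)=0$. The Weierstrass theorem gives $f'_{m_k}\to f'$ locally uniformly, and passing to the limit pointwise in the inequality $(1-|z|^2)^\alpha|f'_{m_k}(z)|\leq \sup_k\|f_{m_k}\|_{\mathcal{B}_\alpha}$ shows $f\in \mathcal{B}_\alpha^0$. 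The sequence $h_k:=f_{m_k}-f$ is then bounded in $\mathcal{B}_\alpha^0$ and tends to $0$ uniformly on compacts, so by hypothesis $\|C_{g_\beta} h_k\|_{\mathcal{B}_\alpha}\to 0$; equivalently, $C_{g_\beta} f_{m_k}\to C_{g_\beta} f$ in norm. Since every bounded sequence in $\mathcal{B}_\alpha^0$ admits such a norm-convergent image subsequence, $C_{g_\beta}$ is compact.

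The only non-bookkeeping step is the implication ``norm convergence in $\mathcal{B}_\alpha^0$ implies uniform convergence on compact subsets of $\mathbb{D}$,'' used in the necessity direction. This is a direct consequence of Lemma \ref{1lemma2.1} applied to $h=C_{g_\beta} f_{m_k}-g$: since $h(0)=0$, each of the three cases of the lemma bounds $\sup_{z\in K}|h(z)|$ by a constant depending only on the compact set $K\subset\mathbb{D}$ times $\|h\|_{\mathcal{B}_\alpha}$, which tends to $0$. With this observation in hand, the rest of the argument reduces to the usual Montel-plus-subsequence extraction, and I do not anticipate any serious obstacle beyond being careful to check at each stage that the extracted subsequences still satisfy the relevant hypotheses.
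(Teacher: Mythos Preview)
Your argument is correct and is precisely the standard normal-families/Montel extraction that underlies the result; the paper does not write this out but simply invokes Lemma~\ref{1lemma3.1} together with \cite[Lemma~3]{Stevic06}, which encodes exactly the same mechanism. In other words, you have unpacked the cited lemma, so the approaches coincide.
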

\begin{proof}
 The proof of this lemma follows from Lemma \ref{1lemma3.1} and \cite[Lemma 3]{Stevic06}.
\end{proof}
With the help of Lemma \ref{1lemma3.2}, we prove the following theorems.
\begin{theorem}\label{1theorem3.3}
  The generalized Alexander operator is a compact linear operator from $\mathcal{B}_{\alpha}^0$ to $\mathcal{B}_{\alpha}^0$, for $\alpha>0$.
\end{theorem}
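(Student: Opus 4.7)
The plan is to verify the compactness criterion provided by Lemma~\ref{1lemma3.2}. Let $(f_m) \subset \mathcal{B}_\alpha^0$ be a bounded sequence, say $\sup_m \|f_m\|_{\mathcal{B}_\alpha} \leq M_0$, with $f_m \to 0$ uniformly on compact subsets of $\mathbb{D}$; the goal is to show $\|C_{g_0} f_m\|_{\mathcal{B}_\alpha} \to 0$. The first step is the observation that when $\beta=0$ the weight \eqref{1eq1.2} reduces to $g_0(w) = \sum_{j=1}^{k} a_j + h(w)$, which is a bounded analytic function on $\mathbb{D}$; write $M := \sup_{\mathbb{D}} |g_0|$. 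Since $(C_{g_0} f_m)'(z) = (f_m(z)/z)\, g_0(z)$, it is enough to prove that
\[
  \sup_{z \in \mathbb{D}} (1-|z|^2)^{\alpha} \left| \frac{f_m(z)}{z} \right| \longrightarrow 0.
\]

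The core technique is a split-the-disk argument at a radius $r \in (0,1)$. On the compact set $\{|z| \leq r\}$ I would use the integral representation $f_m(z)/z = \int_0^1 f_m'(zt)\,dt$ together with the Weierstrass theorem (giving $f_m' \to 0$ uniformly on compacta) to conclude that $f_m(z)/z \to 0$ uniformly on this set, whence the supremum of $(1-|z|^2)^{\alpha}|f_m(z)/z|$ over $\{|z|\leq r\}$ tends to $0$ with $m$. On the outer annulus $\{r < |z| < 1\}$ I would invoke the pointwise bounds derived in the proofs of Theorems~\ref{1theorem2.2}, \ref{1theorem2.3} and \ref{1theorem2.4} with $\beta=0$. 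Each of these bounds has the shape $(1-|z|^2)^\alpha|f(z)/z| \leq C_\alpha(|z|)\,\|f\|_{\mathcal{B}_\alpha}$ where $C_\alpha(t)$ depends only on $t=|z|$ and satisfies $C_\alpha(t) \to 0$ as $t \to 1^-$. Consequently the supremum over the annulus can be made uniformly (in $m$) less than $\varepsilon/(2M)$ by choosing $r$ sufficiently close to $1$; then choosing $m$ large handles the compact part, completing the $\varepsilon/2+\varepsilon/2$ estimate.

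The main obstacle is verifying the vanishing property $C_\alpha(t) \to 0$ uniformly across the three regimes $\alpha<1$, $\alpha=1$ and $\alpha>1$, since the three bounds from Lemma~\ref{1lemma2.1} have rather different structures (polynomial, logarithmic, and with a singular factor $(1-|z|)^{-(\alpha-1)}$ respectively). In each case the factor $(1-|z|^2)^\alpha$ together with $f_m(0)=0$ is just barely enough to outweigh the growth of $|f(z)/z|$ near the boundary; specifically, the decisive terms are $(1-|z|)^\alpha$ in case $\alpha<1$, $(1-|z|)\log\frac{1+|z|}{1-|z|}$ in case $\alpha=1$, and $(1-|z|)[1-(1-|z|)^{\alpha-1}]$ in case $\alpha>1$, each of which vanishes at the boundary. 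Once this case-by-case check is in place, the rest of the argument is a routine $\varepsilon/2$-splitting and does not require any new ingredient beyond Lemma~\ref{1lemma3.2} and the estimates already established in Section~\ref{1sec2}.
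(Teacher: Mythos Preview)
Your proposal is correct and follows essentially the same route as the paper: invoke Lemma~\ref{1lemma3.2}, split the disk into a compact part (handled by uniform convergence of $f_m(z)/z$ to $0$) and an outer annulus (handled by the three growth estimates of Lemma~\ref{1lemma2.1} in the regimes $\alpha<1$, $\alpha=1$, $\alpha>1$). The only cosmetic difference is that you first factor out the bounded weight $g_0$ and reduce to controlling $(1-|z|^2)^\alpha|f_m(z)/z|$, whereas the paper keeps $g_0$ inside the estimates and bounds it by $\|g_0\|_\infty$ at the end.
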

\begin{proof}
Suppose $(f_m)$ is a sequence in $\mathcal{B}_\alpha^0$, which converges to 0 uniformly on every compact subset of $\mathbb{D}$ and also bounded such that there exists a constant $M\in\mathbb{N}$ with $\|f_m\|_{\mathcal{B}_\alpha}\leq M$. 
We need to show that $\lim\limits_{m\to \infty}\|C_{g_0} f_m\|_{\mathcal{B}_\alpha}=0$.

Let $(s_k)_{k\in \mathbb{N}}$ be a sequence which increasingly converges to $1$. We have
\begin{align}
\lim\limits_{m\rightarrow \infty}\|C_{g_0} f_m\|_{\mathcal{B}_\alpha}
           =&\lim\limits_{m\rightarrow \infty}\sup\limits_{|z|<1} (1-|z|^2)^\alpha \bigg|\frac{f_m(z)g_0(z)}{z}\bigg| \nonumber\\
           \leq & \lim\limits_{m\rightarrow \infty}\sup\limits_{|z|\leq s_k} (1-|z|^2)^\alpha \bigg|\frac{f_m(z)g_0(z)}{z}\bigg| \\
           &+\lim\limits_{m\rightarrow \infty}\sup\limits_{s_k<|z|<1} (1-|z|^2)^\alpha \bigg|\frac{f_m(z)g_0(z)}{z}\bigg| \nonumber.
\end{align}
From \eqref{1eq3.1}, $f_m(z)g_0(z)/z$ converges to 0 uniformly on every compact subset of $\mathbb{D}$. Thus, we obtain
\begin{equation}\label{1eq3.2}
\lim\limits_{m\rightarrow \infty}\sup\limits_{|z|\leq s_k} (1-|z|^2)^\alpha \bigg|\frac{f_m(z)g_0(z)}{z}\bigg| = 0,
\end{equation}
for each $k\in \mathbb{N}$.

We consider three cases here.
\subsection*{Case (i)} Assume that $\alpha<1$. Then from \eqref{1eq2.3}, we have
 \begin{equation}
  |f_m(z)|\leq \|f_m\|_{\mathcal{B}_\alpha}\frac{1}{1-\alpha}\nonumber.
 \end{equation}
This yields
 $$
 \sup\limits_{s_k<|z|<1}(1-|z|^2)^\alpha \bigg|\frac{f_m(z)g_0(z)}{z}\bigg|
                         \leq \sup\limits_{s_k<|z|<1}\frac{(1-|z|^2)^\alpha}{|z|(1-\alpha)} |g_0(z)|\|f_m\|_{\mathcal{B}_\alpha}.
                         $$
                         Equivalently, we obtain                       
                         \begin{equation}
\label{1eq3.3}                       
\sup\limits_{s_k<|z|<1}(1-|z|^2)^\alpha \bigg|\frac{f_m(z)g_0(z)}{z}\bigg|   \leq M\frac{(1-s_k^2)^\alpha}{s_k(1-\alpha)} \|g_0\|_\infty,
 \end{equation}
which tends to $0$ as $k \rightarrow \infty$.
 
\subsection*{Case (ii)} Suppose that $\alpha=1$. It follows from \eqref{1eq2.4} that
$$
|f_m(z)| \leq \frac{\|f_m\|_{\mathcal{B}_1}}{2}\log \bigg(\frac{1+|z|}{1-|z|}\bigg)\nonumber.
$$
Then we have
\begin{align}\label{1eq3.4}
&\nonumber\sup\limits_{s_k<|z|<1}(1-|z|^2) \bigg|\frac{f_m(z)g_0(z)}{z}\bigg|\\
&\hspace{2cm}\leq M \sup\limits_{s_k<|z|<1}\frac{(1-|z|^2)}{2|z|}\log \bigg(\frac{1+|z|}{1-|z|}\bigg) |g_0(z)|,
\end{align}
which tends to 0 as $k \rightarrow \infty$.

\subsection*{Case (iii)} Assume that $\alpha>1$. Then from \eqref{1eq2.5}, we have
$$
 |f_m(z)|\leq \frac{\|f_m\|_{\mathcal{B}_\alpha}}{\alpha-1} \bigg(\frac{1}{(1-|z|)^{\alpha-1}}-1\bigg).
$$
This leads to
\begin{align}\label{1eq3.5}
&\nonumber\sup\limits_{s_k<|z|<1}(1-|z|^2)^\alpha \bigg|\frac{f_m(z)g_0(z)}{z}\bigg|\\
&\hspace{1cm}\leq M \sup\limits_{s_k<|z|<1}\frac{(1-|z|^2)^\alpha}{|z|(\alpha-1)}\bigg(\frac{1}{(1-|z|)^{\alpha-1}}-1\bigg) |g_0(z)|.
\end{align}
The right hand side quantity tends to 0 as $k \rightarrow \infty$.
Thus, from \eqref{1eq3.2}, \eqref{1eq3.3}, \eqref{1eq3.4} and \eqref{1eq3.5}, we conclude that
$$
 \lim\limits_{m\rightarrow \infty} \|C_{g_0} f_m\|_{\mathcal{B}_\alpha} = 0.
 $$
 The proof of our theorem is complete.
\end{proof}
\begin{theorem}\label{1theorem3.4}
The $\beta$-Ces\`{a}ro operator is a compact linear operator from $\mathcal{B}_{\alpha}^0$ to $\mathcal{B}_{\alpha}^0$, either for $\beta<\alpha<1$ or $\beta<1<\alpha$ or $\beta<\alpha=1$.
\end{theorem}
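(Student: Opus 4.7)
The plan is to invoke Lemma~\ref{1lemma3.2}, which reduces compactness of $C_\beta$ to the following assertion: whenever $(f_m)$ is a sequence in $\mathcal{B}_\alpha^0$ with $\|f_m\|_{\mathcal{B}_\alpha}\leq M$ that converges to $0$ uniformly on compact subsets of $\mathbb{D}$, one has $\|C_\beta f_m\|_{\mathcal{B}_\alpha}\to 0$. I would then mirror the strategy of the proof of Theorem~\ref{1theorem3.3}: pick a sequence $s_k\uparrow 1$ and split
\begin{equation*}
\|C_\beta f_m\|_{\mathcal{B}_\alpha} \leq \sup_{|z|\leq s_k}(1-|z|^2)^\alpha\left|\frac{f_m(z)}{z(1-z)^\beta}\right| + \sup_{s_k<|z|<1}(1-|z|^2)^\alpha\left|\frac{f_m(z)}{z(1-z)^\beta}\right|.
\end{equation*}
For each fixed $k$, the first (``interior'') supremum tends to $0$ as $m\to\infty$: on $\{|z|\leq s_k\}$ the factor $(1-z)^{-\beta}$ is bounded, the quotient $f_m(z)/z$ converges to $0$ uniformly by~\eqref{1eq3.1}, and the Bloch weight is bounded, so the whole product vanishes uniformly.

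The crux is to force the second (``boundary'') supremum to $0$ as $k\to\infty$ uniformly in $m$. For this I would feed the appropriate part of Lemma~\ref{1lemma2.1} into $|f_m(z)|$ and simplify, using the identity $(1-|z|^2)^\alpha/(1-|z|)^\beta = (1+|z|)^\alpha(1-|z|)^{\alpha-\beta}$. In the case $\beta<\alpha<1$, part~(i) yields $|f_m(z)|\leq M/(1-\alpha)$, so the boundary integrand is dominated by a constant multiple of $(1+|z|)^\alpha(1-|z|)^{\alpha-\beta}/|z|$, which vanishes at $|z|=1$ because $\alpha-\beta>0$. In the case $\beta<\alpha=1$, part~(ii) introduces a logarithm, but the surviving factor $(1-|z|)^{1-\beta}\log((1+|z|)/(1-|z|))$ still tends to $0$ as $|z|\to 1$ since $1-\beta>0$. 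In the case $\beta<1<\alpha$, part~(iii) contributes the singular factor $(1-|z|)^{-(\alpha-1)}$ (after dropping the harmless $-1$), which is exactly absorbed by $(1-|z|)^{\alpha-\beta}$ to leave $(1-|z|)^{1-\beta}$, and once again $1-\beta>0$ drives the residue to zero.

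In each case the resulting bounding function depends only on $|z|$, is continuous on $\mathbb{D}$, and tends to $0$ at the boundary; hence its supremum over $\{s_k<|z|<1\}$ tends to $0$ as $s_k\to 1$, independently of $m$. Combined with the vanishing of the interior piece for fixed $k$, this gives $\lim_{m\to\infty}\|C_\beta f_m\|_{\mathcal{B}_\alpha}=0$, as required.

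The main obstacle throughout is the singularity of the Ces\`{a}ro kernel $(1-z)^{-\beta}$ at $z=1$. The hypotheses $\beta<\alpha$ in the first case and $\beta<1$ in the other two are precisely the numerical budget needed so that, after the growth of $|f_m(z)|$ supplied by Lemma~\ref{1lemma2.1} is weighed against the Bloch factor $(1-|z|^2)^\alpha$ and the kernel, a strictly positive power of $(1-|z|)$ is left over; the borderline equalities $\beta=\alpha<1$ and $\beta=1<\alpha$ would reduce this exponent to zero and therefore require the finer treatment deferred to Section~\ref{1sec4}.
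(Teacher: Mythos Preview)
Your proposal is correct and follows essentially the same approach as the paper: invoke Lemma~\ref{1lemma3.2}, split the supremum at a radius $s_k\uparrow 1$, handle the interior piece via \eqref{1eq3.1}, and control the boundary piece in three cases using Lemma~\ref{1lemma2.1} to produce a surviving positive power of $(1-|z|)$. Your remark that the borderline cases $\beta=\alpha<1$ and $\beta=1<\alpha$ require the finer analysis of Section~\ref{1sec4} is also exactly in line with the paper.
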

\begin{proof}
 Suppose $(f_m)$ is a bounded sequence in $\mathcal{B}_\alpha^0$ and also converges to $0$ uniformly on compact subsets of $\mathbb{D}$. 
Let $(s_k)_{k\in \mathbb{N}}$ be a sequence which increasingly converges to $1$. We compute
\begin{align}
\lim\limits_{m\rightarrow \infty}\|C_\beta f_m\|_{\mathcal{B}_\alpha}
           =&\lim\limits_{m\rightarrow \infty}\sup\limits_{|z|<1} (1-|z|^2)^\alpha \bigg|\frac{f_m(z)}{z(1-z)^\beta}\bigg| \nonumber\\
           \leq& \lim\limits_{m\rightarrow \infty}\sup\limits_{|z|\leq s_k} \frac{(1-|z|^2)^\alpha}{(1-|z|)^\beta} \bigg|\frac{f_m(z)}{z}\bigg|\\ 
           &+\lim\limits_{m\rightarrow \infty}\sup\limits_{s_k<|z|<1} \frac{(1-|z|^2)^\alpha}{(1-|z|)^\beta}\bigg|\frac{f_m(z)}{z}\bigg| \nonumber.
\end{align}
To compute the second term in right hand side, we need to consider three cases on $\alpha$.
\subsection*{Case (i)} Consider $\beta<\alpha<1$. Then we have
$$
\sup\limits_{s_k<|z|<1}(1-|z|^2)^\alpha \bigg|\frac{f_m(z)}{z(1-z)^\beta}\bigg| \leq  \sup\limits_{s_k<|z|<1}\frac{(1-|z|^2)^\alpha}{|z|(1-|z|)^\beta (1-\alpha)}\|f_m\|_{\mathcal{B}_\alpha},
$$
since \eqref{1eq2.3} gives $|f_m(z)|\leq (1-\alpha)^{-1}\|f_m\|_{\mathcal{B}_\alpha}$. Further, we obtain
\begin{align}\label{1eq3.8}
 \sup\limits_{s_k<|z|<1}(1-|z|^2)^\alpha \bigg|\frac{f_m(z)}{z(1-z)^\beta}\bigg|&\leq 2^{\alpha}\frac{(1-s_k)^{\alpha-\beta}}{s_k(1-\alpha)}\|f_m\|_{\mathcal{B}_\alpha}.
\end{align}
The above right hand side term tends to $0$ as $k \rightarrow \infty$.
\subsection*{Case (ii)} Assume that $\beta<\alpha=1$. It follows that
\begin{align}\label{1eq3.9}
&\sup\limits_{s_k<|z|<1}(1-|z|^2) \bigg|\frac{f_m(z)}{z(1-z)^\beta}\bigg|\\
&\hspace{2cm}\leq \frac{\|f_m\|_{\mathcal{B}_1}}{s_k} \sup\limits_{s_k<|z|<1}(1-|z|)^{1-\beta}\log \bigg(\frac{1+|z|}{1-|z|}\bigg),\nonumber
\end{align}
where we used the inequality 
$$
|f_m(z)|\leq \frac{\|f_m\|_{\mathcal{B}_1}}{2}\log \bigg(\frac{1+|z|}{1-|z|}\bigg),
$$
which is due to \eqref{1eq2.4}.
Now, as $k \rightarrow \infty$ 
\begin{align*}
 \sup\limits_{s_k<|z|<1}(1-|z|)^{1-\beta}\log \bigg(\frac{1+|z|}{1-|z|}\bigg)
\end{align*}
tends to 0.
\subsection*{Case (iii)} Suppose that $\alpha>1>\beta$. Then from \eqref{1eq2.5}, we obtain
\begin{align}
	|f_m(z)|&\leq \frac{\|f_m\|_{\mathcal{B}_\alpha}}{\alpha-1} \bigg(\frac{1}{(1-|z|)^{\alpha-1}}-1\bigg)\nonumber.
\end{align}
It follows that
\begin{align*}
&\sup\limits_{s_k<|z|<1}(1-|z|^2)^\alpha \bigg|\frac{f_m(z)}{z(1-z)^\beta}\bigg|\\
	&\hspace{1cm}\leq  \sup\limits_{s_K<|z|<1}\frac{(1-|z|^2)^\alpha}{|z|(\alpha-1)}\bigg(\frac{1}{(1-|z|)^{\beta+\alpha-1}}-\frac{1}{(1-|z|)^{\beta}}\bigg)\|f_m\|_{\mathcal{B}_\alpha}.
\end{align*}
This equivalently gives
\begin{align}
\label{1eq3.10}
&\sup\limits_{s_k<|z|<1}(1-|z|^2)^\alpha \bigg|\frac{f_m(z)}{z(1-z)^\beta}\bigg|\nonumber\\
&\hspace{.5cm}\leq \sup\limits_{s_k<|z|<1}\frac{(1+|z|)^\alpha}{|z|(\alpha-1)}\Big((1-|z|)^{1-\beta}-(1-|z|)^{\alpha-\beta}\Big)\|f_m\|_{\mathcal{B}_\alpha}.
\end{align}
The right hand side quantity of \eqref{1eq3.10} tends to $0$ as $k \rightarrow \infty$.
	
By \eqref{1eq3.1}, $f_m(z)/z$ converges to $0$ uniformly on compact subsets of $\mathbb{D}$. This leads to
\begin{equation}\label{1eq3.7}
\lim\limits_{m\rightarrow \infty}\sup\limits_{|z|\leq s_k} \frac{(1-|z|^2)^\alpha}{(1-|z|)^\beta} \bigg|\frac{f_m(z)}{z}\bigg| = 0,
\end{equation}
for each $k\in \mathbb{N}$.

	From \eqref{1eq3.7}, \eqref{1eq3.8}, \eqref{1eq3.9} and \eqref{1eq3.10}, we obtain
	$$
	\lim\limits_{m\rightarrow \infty} \|C_\beta f_m\|_{\mathcal{B}_\alpha} = 0
	$$
	for $\beta<\alpha<1$, $\beta<1<\alpha$ and $\beta<\alpha=1$. This is what we wanted to show.
\end{proof}
\begin{corollary}\label{1corollary3.1}
The generalized $\beta$-Ces\`{a}ro operator is a compact linear operator from $\mathcal{B}_{\alpha}^0$ to $\mathcal{B}_{\alpha}^0$, either for $\beta<\alpha<1$ or $\beta<1<\alpha$ or $\beta<\alpha=1$.
\end{corollary}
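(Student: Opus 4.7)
The plan is to invoke Lemma~\ref{1lemma3.2} and to reduce to Theorems~\ref{1theorem3.3} and~\ref{1theorem3.4} by exploiting the explicit decomposition \eqref{1eq1.2} of $g_\beta$. Fix a bounded sequence $(f_m) \subset \mathcal{B}_\alpha^0$ with $\|f_m\|_{\mathcal{B}_\alpha} \leq M$ that converges to $0$ uniformly on every compact subset of $\mathbb{D}$. The goal is to show that $\|C_{g_\beta} f_m\|_{\mathcal{B}_\alpha} \to 0$.

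First I would apply the triangle inequality to \eqref{1eq1.2}, exactly as in the proof of Corollary~\ref{1corollary2.2}, to obtain
\begin{align*}
(1-|z|^2)^\alpha \bigg|\frac{f_m(z) g_\beta(z)}{z}\bigg|
&\leq \sum_{j=1}^{k}|a_j|\,(1-|z|^2)^\alpha \frac{|f_m(z)|}{|z|\,|1-b_j z|^\beta} \\
&\quad+\|h\|_\infty\,(1-|z|^2)^\alpha \frac{|f_m(z)|}{|z|}.
\end{align*}
The last summand is $\|h\|_\infty$ times the Alexander-type quantity whose supremum tends to $0$ by the argument of Theorem~\ref{1theorem3.3} (valid for every $\alpha > 0$).

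For the first $k$ summands I use the elementary inequality $|1-b_j z| \geq 1-|z|$, which holds because $|b_j| = 1$. When $\beta > 0$ this yields $|1-b_j z|^{-\beta} \leq (1-|z|)^{-\beta}$, so each summand is dominated by a constant multiple of $(1-|z|^2)^\alpha |f_m(z)|/[|z|(1-|z|)^\beta]$, which is precisely the quantity bounded in the proof of Theorem~\ref{1theorem3.4} under each of the hypotheses $\beta < \alpha < 1$, $\beta < \alpha = 1$, and $\beta < 1 < \alpha$. When $\beta \leq 0$ the estimate $|1-b_j z| \leq 2$ gives $|1-b_j z|^{-\beta} \leq 2^{-\beta}$, a uniform constant, so the summand is again Alexander-type and controlled by Theorem~\ref{1theorem3.3}.

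In either case I then split $\sup_{z \in \mathbb{D}}$ into $\sup_{|z| \leq s_k}$ and $\sup_{s_k < |z| < 1}$ for an auxiliary sequence $s_k \nearrow 1$. The supremum over $\{|z| \leq s_k\}$ tends to $0$ as $m \to \infty$ for each fixed $k$, using \eqref{1eq3.1} together with the boundedness of $g_\beta$ on compact subsets of $\mathbb{D}$; the supremum over $\{s_k < |z| < 1\}$ tends to $0$ as $k \to \infty$ by the very estimates \eqref{1eq3.8}--\eqref{1eq3.10} (or \eqref{1eq3.3}--\eqref{1eq3.5} for the Alexander-type piece) already established. Letting $m \to \infty$ and then $k \to \infty$ finishes the argument. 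The only real obstacle is organisational: distributing the triangle-inequality pieces between the two reference theorems and separately handling $\beta > 0$ versus $\beta \leq 0$ in the estimate of $|1-b_j z|^{-\beta}$; no new analytic input is required.
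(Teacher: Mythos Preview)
Your argument is correct and follows essentially the same route as the paper: decompose via the triangle inequality as in Corollary~\ref{1corollary2.2}, then feed the $(1-b_j z)^{-\beta}$ pieces into the estimates of Theorem~\ref{1theorem3.4} and the bounded $h$-piece into those of Theorem~\ref{1theorem3.3}. Your explicit separation of the cases $\beta>0$ versus $\beta\le 0$ when bounding $|1-b_j z|^{-\beta}$ is a welcome clarification that the paper leaves implicit.
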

\begin{proof}
	The proof of Corollary \ref{1corollary3.1} follows steps given in the proof of Corollary \ref{1corollary2.2} and then we proceed like proof of Theorem \ref{1theorem3.3} and Theorem \ref{1theorem3.4}. 
\end{proof}

\section{\textbf{ Essential norm of the $\beta$-Ces\'{a}ro operators}}\label{1sec4}
This section is devoted to obtaining the essential norm of the $\beta$-Ces\`{a}ro operators.
First we recall the concept of essential norm.
 Let $X$ and $Y$ be Banach spaces and $T:X\rightarrow Y$ be a bounded linear operator. The {\em essential norm}
 of the operator $T:X\rightarrow Y$, denoted by $\|T\|_e$, is defined as follows
 \begin{equation}\label{Sec4Eq1}
  \|T\|_e = \inf\{\|T+K\|:\text{K is a compact operator from X to Y} \},
 \end{equation}
where $\|.\|$ denotes the operator norm. The following remark is a
direct consequence of \eqref{Sec4Eq1}.

\begin{remark}\label{1remark4.1}
It is well-known that the set of all compact operators from a normed linear space to a Banach space is a closed subset of the set of bounded operators. Using this fact together with \eqref{Sec4Eq1}, one can easily show that 
an operator T is compact if and only if $ \|T\|_e=0$.
\end{remark}

The compactness of the generalized $\beta$-Ces\`{a}ro operator is studied directly in the
previous section; however the situations when $\beta=\alpha<1$ and 
$\beta=1<\alpha$ could not be handled directly. In this section, the concept of
essential norm played a crucial role in handling these unsolved situations.

\begin{theorem}\label{1theorem4.2}
  The essential norm of the generalized Alexander operator from $\mathcal{B}_{\alpha}^0$ to $\mathcal{B}_{\alpha}^0$ is $0$.
\end{theorem}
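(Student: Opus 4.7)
The plan is to invoke the characterization of compactness via essential norm that was just recorded in Remark~\ref{1remark4.1}, combined with the compactness result already proven in the previous section. Specifically, Theorem~\ref{1theorem3.3} establishes that the generalized Alexander operator $C_{g_0}:\mathcal{B}_\alpha^0\to\mathcal{B}_\alpha^0$ is a compact linear operator for every $\alpha>0$. Remark~\ref{1remark4.1} then tells us that a bounded linear operator $T$ between Banach spaces satisfies $\|T\|_e=0$ if and only if $T$ is compact.

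So the proof is essentially a one-line invocation: I would simply state that since $C_{g_0}$ is compact by Theorem~\ref{1theorem3.3}, applying Remark~\ref{1remark4.1} with the zero operator $K=0$ (or, more precisely, taking $K=-C_{g_0}$, which is compact, in the infimum defining $\|C_{g_0}\|_e$) immediately gives $\|C_{g_0}\|_e=0$.

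There is no real obstacle here, because the substantive analytic work — namely, showing that for every bounded sequence $(f_m)$ in $\mathcal{B}_\alpha^0$ converging to $0$ uniformly on compact subsets of $\mathbb{D}$ one has $\|C_{g_0}f_m\|_{\mathcal{B}_\alpha}\to 0$, split into the three ranges $\alpha<1$, $\alpha=1$ and $\alpha>1$ via Lemma~\ref{1lemma2.1} — has already been carried out in Theorem~\ref{1theorem3.3}. The role of the current theorem is simply to rephrase that compactness result in the language of essential norm, in preparation for the situations $\beta=\alpha<1$ and $\beta=1<\alpha$ that will actually require a nontrivial essential-norm estimate.
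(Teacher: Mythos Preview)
Your argument is correct: since Theorem~\ref{1theorem3.3} already established that $C_{g_0}$ is compact on $\mathcal{B}_\alpha^0$ for every $\alpha>0$, Remark~\ref{1remark4.1} immediately yields $\|C_{g_0}\|_e=0$. This is a genuinely different route from the paper's. The paper does \emph{not} invoke Theorem~\ref{1theorem3.3}; instead it gives a self-contained direct proof by constructing an explicit family of compact approximants
\[
K_{g_0}^{s_k}(f)(z)=\int_0^z \frac{f(s_k t)g_0(t)}{t}\,dt
\]
for an increasing sequence $s_k\to 1$, verifying compactness of each $K_{g_0}^{s_k}$ via Lemma~\ref{1lemma3.2}, and then showing $\|C_{g_0}-K_{g_0}^{s_k}\|_{\mathcal{B}_\alpha}\to 0$ through the same three-case analysis ($\alpha<1$, $\alpha=1$, $\alpha>1$) based on Lemma~\ref{1lemma2.1}. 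Your approach is considerably shorter and avoids reproving estimates that are essentially duplicates of those in Theorem~\ref{1theorem3.3}. The paper's approach, on the other hand, exhibits an explicit compact approximation scheme $K^{s_k}$ and thereby sets up the template that is reused (with $g_0$ replaced by $(1-t)^{-\beta}$ or $g_\beta$) in Theorem~\ref{1theorem4.3} and Corollary~\ref{1corollary4.1}; but since those results are themselves consequences of Theorem~\ref{1theorem3.4} and Corollary~\ref{1corollary3.1} via Remark~\ref{1remark4.1}, the real payoff of the explicit-approximant method only appears later in Theorems~\ref{1theorem4.4} and~\ref{1theorem4.5}, which use a different approximation anyway.
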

\begin{proof}
Consider the operator defined on $\mathcal{B}_{\alpha}^0$ by
$$
 K_{g_0}^{s_k} (f)(z) = \int_0^z \frac{f(s_k t)g_0(t)}{t}dt
$$
where $(s_k)_{k \in \mathbb{N}}$ is a increasing sequence converging to $1$ and $g_0$ is a bounded analytic 
function in $\mathbb{D}$.

Suppose $(f_m)_{m\in \mathbb{N}}$ is a bounded sequence in $\mathcal{B}_\alpha^0$ which converges to 0 uniformly on every compact subset of $\mathbb{D}$.
Then we see that 
\begin{align*}
 \sup\limits_{|z|<1}(1-|z|^2)^\alpha \bigg|\frac{f_m(s_k z)g_0(z)}{z}\bigg|
                          &\leq\sup\limits_{|z|<1}(1-|s_k z|^2)^\alpha \bigg|\frac{f_m(s_k z)g_0(z)}{s_k z}\bigg|\\
                          &\leq \|g_0\|_\infty \sup\limits_{|z|\leq s_k}(1-|z|^2)^\alpha \bigg|\frac{f_m(z)}{z}\bigg| \rightarrow 0
\end{align*}
as $m \rightarrow \infty$. Hence by Lemma \ref{1lemma3.2}, $K_{g_0}^{s_k}$ is compact for each $k \in \mathbb{N}$.

Let $\lambda \in (0,1)$ be fixed for the moment. Then we have
 \begin{align*}
  \|C_{g_0} - K_{g_0}^{s_k}\|_{\mathcal{B}_\alpha} =&\sup\limits_{\|f\|_{\mathcal{B}_\alpha}\leq 1}\sup\limits_{|z|<1}(1-|z|^2)^\alpha \bigg|\frac{f(z)g_0(z)}{z}-\frac{f(s_k z)g_0(z)}{z}\bigg|\\
                          \leq& \sup\limits_{\|f\|_{\mathcal{B}_\alpha}\leq 1}\sup\limits_{|z|\leq\lambda}(1-|z|^2)^\alpha \bigg|\frac{f(z)g_0(z)}{z}-\frac{f(s_k z)g_0(z)}{z}\bigg|\\
                          &+ \sup\limits_{\|f\|_{\mathcal{B}_\alpha}\leq 1}\sup\limits_{\lambda<|z|<1}(1-|z|^2)^\alpha \bigg|\frac{f(z)g_0(z)}{z}-\frac{f(s_k z)g_0(z)}{z}\bigg|.
 \end{align*}
 By using the classical mean value theorem and definition of $\alpha$-Bloch space, we obtain
 \begin{align*}
& \sup\limits_{\|f\|_{\mathcal{B}_\alpha}\leq 1}\sup\limits_{|z|\leq\lambda}(1-|z|^2)^\alpha \bigg|\frac{f(z)g_0(z)}{z}-\frac{f(s_k z)g_0(z)}{z}\bigg|\\
 &\hspace{2cm}\leq \sup\limits_{\|f\|_{\mathcal{B}_\alpha}\leq 1}\sup\limits_{|z|\leq\lambda}(1-|z|^2)^\alpha (1-s_k)|g_0(z)|\sup\limits_{|w|\leq\lambda}|f'(w)|.
  \end{align*}
  It follows that
  \begin{align} \label{1eq4.5}
&  \sup\limits_{\|f\|_{\mathcal{B}_\alpha}\leq 1}\sup\limits_{|z|\leq\lambda}(1-|z|^2)^\alpha \bigg|\frac{f(z)g_0(z)}{z}-\frac{f(s_k z)g_0(z)}{z}\bigg|\\
&\hspace{4cm} \leq \frac{(1-s_k)}{(1-\lambda^2)^\alpha}\|g_0\|_\infty\sup\limits_{\|f\|_{\mathcal{B}_\alpha}\leq 1}\|f\|_{\mathcal{B}_\alpha}\nonumber,
 \end{align}
 which tends to $0$ as $k\rightarrow \infty$.
 
 We consider the following cases to complete our proof.
 \subsection*{Case (i)} Assume that $\alpha<1$. Then from \eqref{1eq2.3}, we have
 \begin{equation}
  |f(z)|\leq \|f\|_{\mathcal{B}_\alpha}\frac{1}{1-\alpha}\nonumber.
 \end{equation}
 It follows that
\begin{align*}
 & \sup\limits_{\|f\|_{\mathcal{B}_\alpha}\leq1}\sup\limits_{\lambda<|z|<1}(1-|z|^2)^\alpha \bigg|\frac{f(z)g_0(z)}{z}-\frac{f(s_k z)g_0(z)}{z}\bigg|\\
                         &\hspace{60mm}\leq 2 \sup\limits_{\lambda<|z|<1}\frac{(1-|z|^2)^\alpha}{|z|(1-\alpha)} |g_0(z)|.
\end{align*}
Thus, we obtain
\begin{align}
\label{1eq4.6}         
& \nonumber\sup\limits_{\|f\|_{\mathcal{B}_\alpha}\leq 1}\sup\limits_{\lambda<|z|<1}(1-|z|^2)^\alpha \bigg|
\frac{f(z)g_0(z)}{z}-\frac{f(s_k z)g_0(z)}{z}\bigg|\\
& \hspace*{3cm}\leq 2\frac{(1-\lambda^2)^\alpha}{\lambda(1-\alpha)} \|g_0\|_\infty.
 \end{align}
The right hand side of \eqref{1eq4.6} tends to 0 as $\lambda \rightarrow 1$.

\subsection*{Case (ii)} Consider $\alpha=1$. From \eqref{1eq2.4}, we obtain
\begin{align}
|f(z)|&\leq \frac{\|f\|_{\mathcal{B}_1}}{2}\log \bigg(\frac{1+|z|}{1-|z|}\bigg)\nonumber.
\end{align}
This simplifies to
\begin{align}\label{1eq4.7}
&\nonumber\sup\limits_{\|f\|_{\mathcal{B}_1}\leq 1}\sup\limits_{\lambda<|z|<1}(1-|z|^2) \bigg|\frac{f(z)g_0(z)}{z}-\frac{f(s_k z)g_0(z)}{z}\bigg|\\
&\hspace*{2cm}\leq  \sup\limits_{\lambda<|z|<1}\frac{(1-|z|^2)}{|z|}\log \bigg(\frac{1+|z|}{1-|z|}\bigg) |g_0(z)|,
\end{align}
which tends to $0$ as $\lambda \rightarrow 1$.
\subsection*{Case (iii)} Suppose that $\alpha>1$. Then \eqref{1eq2.5} obtains
\begin{align}
 |f(z)|&\leq \frac{\|f\|_{\mathcal{B}_\alpha}}{\alpha-1} \bigg(\frac{1}{(1-|z|)^{\alpha-1}}-1\bigg)\nonumber.
\end{align}
It leads to
\begin{align}\label{1eq4.8}
&\nonumber\sup\limits_{\|f\|_{\mathcal{B}_\alpha}\leq 1}\sup\limits_{\lambda<|z|<1}(1-|z|^2)^\alpha \bigg|\frac{f(z)g_0(z)}{z}-\frac{f(s_k z)g_0(z)}{z}\bigg|\\
&\hspace{2cm}\leq 2 \sup\limits_{\lambda<|z|<1}\frac{(1-|z|^2)^\alpha}{|z|(\alpha-1)}\bigg(\frac{1}{(1-|z|)^{\alpha-1}}-1\bigg) |g_0(z)|.
\end{align}
The right hand side quantity of \eqref{1eq4.8} tends to 0 as $\lambda \rightarrow 1$.
Thus, from \eqref{1eq4.5}, \eqref{1eq4.6}, \eqref{1eq4.7} and \eqref{1eq4.8}, we obtain
$$
 \lim\limits_{k\rightarrow \infty} \|C_{g_0} - K_{g_0}^{s_k}\|_{\mathcal{B}_\alpha} = 0,
$$
 and the conclusion follows from the definition of essential norm.
\end{proof}

\begin{theorem}\label{1theorem4.3}
	The essential norm of the $\beta$-Ces\`{a}ro operator from $\mathcal{B}_{\alpha}^0$ to $\mathcal{B}_{\alpha}^0$ is $0$, either for $\beta<\alpha<1$ or $\beta<1<\alpha$ or $\beta<\alpha=1$.
\end{theorem}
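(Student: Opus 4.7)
The quickest route is to invoke Remark \ref{1remark4.1}: Theorem \ref{1theorem3.4} has already established that $C_\beta$ is compact in each of the three ranges $\beta<\alpha<1$, $\beta<1<\alpha$, and $\beta<\alpha=1$, and Remark \ref{1remark4.1} identifies compactness with the vanishing of the essential norm. This suffices. However, to produce an explicit approximation scheme in the spirit of Theorem \ref{1theorem4.2}, I would give a direct proof, which I outline below.

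Mimicking the construction in Theorem \ref{1theorem4.2}, I would introduce the dilated operator
$$
K_\beta^{s_k}(f)(z)=\int_0^z\frac{f(s_k t)}{t(1-t)^\beta}\,dt,
$$
where $(s_k)_{k\in\mathbb{N}}$ is a sequence in $(0,1)$ increasing to $1$. The proof then splits into the familiar two steps: (a) each $K_\beta^{s_k}$ is compact on $\mathcal{B}_\alpha^0$, and (b) $\|C_\beta-K_\beta^{s_k}\|_{\mathcal{B}_\alpha}\to0$ as $k\to\infty$. For (a), I would verify the criterion of Lemma \ref{1lemma3.2}: if $(f_m)$ is bounded in $\mathcal{B}_\alpha^0$ and $f_m\to0$ uniformly on compact subsets, then $f_m(s_k\,\cdot)/\,\cdot$ tends to $0$ uniformly on $\mathbb{D}$ (since $s_k z$ stays inside the compact disk $\{|w|\le s_k\}$), while the weight $(1-|z|^2)^\alpha/|1-z|^\beta$ is bounded on $\mathbb{D}$ in each of our three regimes because $\alpha>\beta$ in all three cases (and $\beta<1$ when $\alpha=1$).

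For (b), for $\|f\|_{\mathcal{B}_\alpha}\le 1$ I would control
$$
(1-|z|^2)^\alpha\left|\frac{f(z)-f(s_kz)}{z(1-z)^\beta}\right|
$$
by splitting the supremum into $|z|\le\lambda$ and $\lambda<|z|<1$ for a fixed $\lambda\in(0,1)$. On the inner disk $|z|\le\lambda$, the mean value theorem combined with the pointwise estimate $|f'(w)|\le\|f\|_{\mathcal{B}_\alpha}/(1-\lambda^2)^\alpha$ gives a bound of order $(1-s_k)$ (with $\lambda$-dependent constants), which tends to $0$ as $k\to\infty$. On the annulus $\lambda<|z|<1$, I would estimate $|f(z)|$ and $|f(s_kz)|$ separately via the appropriate branch of Lemma \ref{1lemma2.1}, producing exactly the tail integrals already analysed in Cases (i)--(iii) of Theorem \ref{1theorem3.4}; each tends to $0$ as $\lambda\to1$, thanks to the strict inequalities $\beta<\alpha$ (for $\alpha\neq1$) and $\beta<1$ (for $\alpha=1$).

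The main subtlety will be the annular estimate for the term containing $f(s_kz)$: a priori the bound must be uniform in $k$. This is painless once one observes that Lemma \ref{1lemma2.1} bounds $|f(s_kz)|$ by an expression monotone increasing in $|s_kz|\le|z|$, so the tail bound for $f(s_kz)$ is dominated by the tail bound for $f(z)$. Letting first $k\to\infty$ (killing the inner-disk term) and then $\lambda\to1$ (killing the annular term) yields $\|C_\beta-K_\beta^{s_k}\|_{\mathcal{B}_\alpha}\to0$, which by the definition \eqref{Sec4Eq1} of the essential norm completes the proof.
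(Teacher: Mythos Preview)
Your proposal is correct. The direct approximation you outline---introducing $K_\beta^{s_k}$, verifying its compactness via Lemma~\ref{1lemma3.2}, and then splitting $\|C_\beta-K_\beta^{s_k}\|_{\mathcal{B}_\alpha}$ into an inner disk handled by the mean value theorem and an annulus handled by the three cases of Lemma~\ref{1lemma2.1}---is exactly the paper's proof, down to the choice of cutoff and the case analysis; your remark on the monotonicity of the Lemma~\ref{1lemma2.1} bounds in $|z|$ (ensuring the $f(s_kz)$ term is controlled uniformly in $k$) is a point the paper leaves implicit.

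Your opening shortcut via Theorem~\ref{1theorem3.4} and Remark~\ref{1remark4.1} is also valid and is not the route the paper takes: the paper gives an independent essential-norm proof rather than simply citing the compactness already established. Your shortcut is logically cleaner and shorter; the paper's direct argument has the virtue of exhibiting an explicit sequence of compact approximants, which is then reused (with the perturbation $\beta\mapsto\beta-1/n$) in the subsequent Theorems~\ref{1theorem4.4} and~\ref{1theorem4.5} to handle the borderline cases $\beta=\alpha<1$ and $\beta=1<\alpha$ not covered by Theorem~\ref{1theorem3.4}.
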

\begin{proof}
	Consider the operator defined on $\mathcal{B}_{\alpha}^0$ by
	$$
	K_{s_k} (f)(z) = \int_0^z \frac{f(s_k t)}{t(1-t)^\beta}dt,
	$$
	where $(s_k)_{k \in \mathbb{N}}$ is an increasing sequence, which converges to $1$.
	
	Suppose $(f_m)_{m\in \mathbb{N}}$ is a bounded sequence in $\mathcal{B}_\alpha^0$,  which converges to $0$ uniformly on compact subsets of $\mathbb{D}$ and $\beta\leq\alpha$.
	Then we obtain
	\begin{align*}
	\sup\limits_{|z|<1}(1-|z|^2)^\alpha \bigg|\frac{f_m(s_k z)}{z(1-z)^\beta}\bigg|
	&\leq 2^\alpha\sup\limits_{|z|<1}(1-|z|)^{\alpha-\beta} \bigg|\frac{f_m(s_k z)}{s_k z}\bigg|\\
	&\leq 2^\alpha \sup\limits_{|z|\leq s_k}(1-|z|)^{\alpha-\beta} \bigg|\frac{f_m(z)}{z}\bigg|,
	\end{align*}
	which tends to 0 as $m \rightarrow \infty$. Hence by Lemma \ref{1lemma3.2}, $K_{s_k}$ is a compact operator for each $k \in \mathbb{N}$.
	
	Let $\lambda \in (0,1)$ be fixed for the moment. We compute
	\begin{align*}
	\|C_{\beta} - K_{s_k}\|_{\mathcal{B}_\alpha} =& \sup\limits_{\|f\|_{\mathcal{B}_\alpha}\leq 1}\sup\limits_{|z|<1}(1-|z|^2)^\alpha \bigg|\frac{f(z)}{z(1-z)^\beta}-\frac{f(s_k z)}{z(1-z)^\beta}\bigg|\\
	\leq& \sup\limits_{\|f\|_{\mathcal{B}_\alpha}\leq 1}\sup\limits_{|z|\leq\lambda}(1-|z|^2)^\alpha \bigg|\frac{f(z)}{z(1-z)^\beta}-\frac{f(s_k z)}{z(1-z)^\beta}\bigg|\\
	&+ \sup\limits_{\|f\|_{\mathcal{B}_\alpha}\leq 1}\sup\limits_{\lambda<|z|<1}(1-|z|^2)^\alpha \bigg|\frac{f(z)}{z(1-z)^\beta}-\frac{f(s_k z)}{z(1-z)^\beta}\bigg|.
	\end{align*}
	By the mean value theorem and definition of $\alpha$-Bloch space, it follows that
	\begin{align*}
	&\sup\limits_{\|f\|_{\mathcal{B}_\alpha}\leq 1}\sup\limits_{|z|\leq\lambda}(1-|z|^2)^\alpha \bigg|\frac{f(z)}{z(1-z)^\beta}-\frac{f(s_k z)}{z(1-z)^\beta}\bigg|\\
	&\hspace{3cm}\leq \sup\limits_{\|f\|_{\mathcal{B}_\alpha}\leq 1}\sup\limits_{|z|\leq\lambda}\frac{(1-|z|^2)^\alpha}{(1-|z|)^\beta} (1-s_k)\sup\limits_{|w|\leq\lambda}|f'(w)|.
	\end{align*}
	This obtains
	\begin{align}\label{1eq4.13}
	&\nonumber\sup\limits_{\|f\|_{\mathcal{B}_\alpha}\leq 1}\sup\limits_{|z|\leq\lambda}(1-|z|^2)^\alpha \bigg|\frac{f(z)}{z(1-z)^\beta}-\frac{f(s_k z)}{z(1-z)^\beta}\bigg|\\
	&\hspace{2cm}\leq \frac{(1-s_k)}{(1-\lambda^2)^\alpha (1-\lambda)^\beta}\sup\limits_{\|f\|_{\mathcal{B}_\alpha}\leq 1}\|f\|_{\mathcal{B}_\alpha},
	\end{align}
	which approaches to $0$ as $k\rightarrow\infty$.
	
	To pursue our goal, we will go through the following cases on $\alpha$ and $\beta$.
	
\subsection*{Case (i)} Consider $\beta<\alpha<1$. Then from \eqref{1eq2.3}, $|f(z)|\leq (1-\alpha)^{-1}\|f\|_{\mathcal{B}_\alpha}$, it follows that
	\begin{align}\label{1eq4.14}  
	&\nonumber\sup\limits_{\|f\|_{\mathcal{B}_\alpha}\leq 1}\sup\limits_{\lambda<|z|<1}(1-|z|^2)^\alpha \bigg|\frac{f(z)}{z(1-z)^\beta}-\frac{f(s_k z)}{z(1-z)^\beta}\bigg|\\
	&\hspace{1cm}\leq 2 \sup\limits_{\lambda<|z|<1}\frac{(1-|z|^2)^\alpha}{|z|(1-|z|)^\beta (1-\alpha)} \nonumber \\
	                        &\hspace{1cm}\leq 2^{\alpha+1}\frac{(1-\lambda)^{\alpha-\beta}}{\lambda(1-\alpha)},
	\end{align}
	which tends to $0$ as $\lambda \rightarrow 1$.
\subsection*{Case (ii)} Assume that $\beta<\alpha=1$.
First we see that
\begin{align*}
&\sup\limits_{\|f\|_{\mathcal{B}_1}\leq 1}\sup\limits_{\lambda<|z|<1}(1-|z|^2) \bigg|\frac{f(z)}{z(1-z)^\beta}-\frac{f(s_k z)}{z(1-z)^\beta}\bigg|\\
&\hspace{40mm}\leq\sup\limits_{\lambda<|z|<1}\frac{(1-|z|^2)}{|z|(1-|z|)^\beta}\log \bigg(\frac{1+|z|}{1-|z|}\bigg).
\end{align*}
 The above inequality easily follows from \eqref{1eq2.4}. This simplifies to
\begin{align}\label{1eq4.15}
&\nonumber\sup\limits_{\|f\|_{\mathcal{B}_1}\leq 1}\sup\limits_{\lambda<|z|<1}(1-|z|^2) \bigg|\frac{f(z)}{z(1-z)^\beta}-\frac{f(s_k z)}{z(1-z)^\beta}\bigg|\\
&\hspace{2cm}\leq \frac{2}{\lambda} \sup\limits_{\lambda<|z|<1}(1-|z|)^{1-\beta}\log \bigg(\frac{1+|z|}{1-|z|}\bigg).
\end{align}
The right hand side quantity of \eqref{1eq4.15} tends to 0 as $\lambda \rightarrow 1$.

\subsection*{Case (iii)} Suppose that $\alpha>1>\beta$. From \eqref{1eq2.5}, we have
	\begin{align}
	|f(z)|&\leq \frac{\|f\|_{\mathcal{B}_\alpha}}{\alpha-1} \bigg(\frac{1}{(1-|z|)^{\alpha-1}}-1\bigg)\nonumber.
	\end{align}
	Then we obtain
	\begin{align}\label{1eq4.16}
	&\nonumber\sup\limits_{\|f\|_{\mathcal{B}_\alpha}\leq 1}\sup\limits_{\lambda<|z|<1}(1-|z|^2)^\alpha \bigg|\frac{f(z)}{z(1-z)^\beta}-\frac{f(s_k z)}{z(1-z)^\beta}\bigg|\\
	&\hspace{1cm}\leq 2 \sup\limits_{\lambda<|z|<1}\frac{(1+|z|)^\alpha}{|z|(\alpha-1)}\bigg((1-|z|)^{1-\beta}-(1-|z|)^{\alpha-\beta}\bigg),
	\end{align}
	which tends to 0 as $\lambda \rightarrow 1$.
	
	From \eqref{1eq4.13}, \eqref{1eq4.14}, \eqref{1eq4.15} and \eqref{1eq4.16}, we thus obtain
	$$
	\lim\limits_{k\rightarrow \infty} \|C_\beta - K_{s_k}\|_{\mathcal{B}_\alpha} = 0
	$$
	for $\beta<\alpha<1$, $\beta<1<\alpha$ and $\beta<\alpha=1$. This completes the proof.
\end{proof}
\begin{corollary}\label{1corollary4.1}
	The essential norm of the generalized $\beta$-Ces\`{a}ro operator from $\mathcal{B}_{\alpha}^0$ to $\mathcal{B}_{\alpha}^0$ is 0, either for $\beta<\alpha<1$ or $\beta<1<\alpha$ or $\beta<\alpha=1$.
\end{corollary}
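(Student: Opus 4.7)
The plan is to adapt the approximating-operator method that handled the Alexander case in Theorem \ref{1theorem4.2} and the $\beta$-Ces\`aro case in Theorem \ref{1theorem4.3}, this time with the generalised kernel $g_\beta$ from \eqref{1eq1.2} in place of $(1-t)^{-\beta}$. Concretely, I would consider
$$
K_{g_\beta}^{s_k}(f)(z)=\int_0^z\frac{f(s_k t)\,g_\beta(t)}{t}\,dt,
$$
where $(s_k)_{k\in\mathbb{N}}$ is an increasing sequence in $(0,1)$ converging to $1$, and aim to verify the two ingredients needed by \eqref{Sec4Eq1}: (a) each $K_{g_\beta}^{s_k}$ is a compact operator on $\mathcal{B}_\alpha^0$, and (b) $\|C_{g_\beta}-K_{g_\beta}^{s_k}\|_{\mathcal{B}_\alpha}\to 0$ as $k\to\infty$. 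Together these force $\|C_{g_\beta}\|_e=0$ and hence compactness by Remark \ref{1remark4.1}.

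For (a), I would apply Lemma \ref{1lemma3.2} in the spirit of Theorem \ref{1theorem4.3}: take a bounded sequence $(f_m)\subset\mathcal{B}_\alpha^0$ converging to $0$ uniformly on compacta, make the substitution $w=s_k z$, and use the elementary bound $|1-b_j z|\ge 1-|z|$ (valid since $|b_j|=1$) together with the boundedness of $h$ to reduce
$$
\sup_{|z|<1}(1-|z|^2)^\alpha\bigg|\frac{f_m(s_k z)g_\beta(z)}{z}\bigg|
$$
to an expression controlled by $\sup_{|w|\le s_k}(1-|w|)^{\alpha-\beta}|f_m(w)/w|$ plus an $\|h\|_\infty$-weighted Alexander-type term. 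Both pieces vanish as $m\to\infty$ because $f_m\to 0$ uniformly on the compact set $\{|w|\le s_k\}$.

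For (b), I would split $\sup_{|z|<1}=\sup_{|z|\le\lambda}+\sup_{\lambda<|z|<1}$ as in Theorem \ref{1theorem4.3}. The first piece is handled, for each fixed $\lambda\in(0,1)$, by the classical mean value theorem applied to $f-f\circ(s_k\mathrm{id})$ plus the boundedness of $g_\beta$ on $\{|z|\le\lambda\}$; it tends to $0$ as $k\to\infty$. For the second piece I would mimic the proof of Corollary \ref{1corollary2.2}: applying the triangle inequality to \eqref{1eq1.2} together with $|1-b_j z|\ge 1-|z|$ yields
$$
|g_\beta(z)|\leq\frac{A}{(1-|z|)^\beta}+\|h\|_\infty,\qquad A=\sum_{j=1}^{k}|a_j|,
$$
which reduces the remainder to a fixed linear combination of expressions already treated in Theorem \ref{1theorem4.2} (the $\|h\|_\infty$-contribution, of Alexander type) and Theorem \ref{1theorem4.3} (the $A/(1-|z|)^\beta$-contribution, of $\beta$-Ces\`aro type), invoking Lemma \ref{1lemma2.1} in each of the three sub-cases $\beta<\alpha<1$, $\beta<\alpha=1$, and $\beta<1<\alpha$.

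The main obstacle I expect is purely the bookkeeping in the sub-case $\beta<1<\alpha$, where the $\lceil\alpha\rceil$ binomial expansion used in Theorem \ref{1theorem2.3} must be carried through again, and one must verify that the combined factor of the form $(1+|z|)^\alpha\big((1-|z|)^{1-\beta}-(1-|z|)^{\alpha-\beta}\big)$ vanishes as $\lambda\to 1$ even after multiplication by the $(1-|z|)^{-\beta}$ coming from $g_\beta$. Since every such decay was verified term by term in Theorems \ref{1theorem4.2} and \ref{1theorem4.3}, the proof ultimately becomes a finite linear combination of already-established limits, requiring no new analytic idea beyond the $|b_j|=1$ observation that tames the multiple boundary singularities of $g_\beta$.
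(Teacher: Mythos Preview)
Your proposal is correct and follows essentially the same approach as the paper: the paper's proof likewise defines the approximating operators $K_{s_k}(f)(z)=\int_0^z f(s_k t)g_\beta(t)/t\,dt$ and then refers back to the triangle-inequality decomposition of Corollary~\ref{1corollary2.2} combined with the estimates already established in Theorems~\ref{1theorem4.2} and~\ref{1theorem4.3}. Your write-up is in fact more detailed than the paper's one-paragraph sketch; the only minor overstatement is the worry about the $\lceil\alpha\rceil$ binomial expansion in the sub-case $\beta<1<\alpha$, since Theorem~\ref{1theorem4.3} already bypasses that expansion and works directly with $(1-|z|)^{1-\beta}-(1-|z|)^{\alpha-\beta}$, which you correctly identify as the decisive decaying factor.
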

\begin{proof}
 As in the proof of Theorem \ref{1theorem4.3}, we similarly define
 $$
	K_{s_k} (f)(z) = \int_0^z \frac{f(s_k t)g_\beta(t)}{t}dt
$$
where $(s_k)_{k \in \mathbb{N}}$ is a increasing sequence, which converges to $1$.

To complete the proof, we follow steps given in the proof of Corollary \ref{1corollary2.2} and follow the proofs of Theorem \ref{1theorem4.2} and Theorem \ref{1theorem4.3}. 
\end{proof}
\begin{theorem}\label{1theorem4.4}
The essential norm of the $\beta$-Ces\`{a}ro operator from $\mathcal{B}_{\alpha}^0$ to $\mathcal{B}_{\alpha}^0$ is $0$, for $\beta=\alpha<1$.
\end{theorem}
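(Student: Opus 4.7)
The plan is to apply the same strategy as in Theorem~\ref{1theorem4.3}, taking the compact approximants
$$
K_{s_k}(f)(z) = \int_0^z \frac{f(s_k t)}{t(1-t)^\beta}\,dt,
$$
with $s_k \nearrow 1$. Each $K_{s_k}$ was already shown to be compact under the sole requirement $\beta \leq \alpha$, which is satisfied here, so by the definition of the essential norm it suffices to prove
$$
\lim_{k\to\infty}\|C_\beta - K_{s_k}\|_{\mathcal{B}_\alpha} = 0.
$$

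The obstacle is that Case (i) of Theorem~\ref{1theorem4.3} extracts boundary smallness from the factor $(1-|z|)^{\alpha-\beta}$, which degenerates to $1$ when $\alpha=\beta$. I would instead extract the required smallness from the difference $f(z) - f(s_k z)$ itself. Writing
$$
f(z) - f(s_k z) = z\int_{s_k}^1 f'(tz)\,dt,
$$
applying the $\alpha$-Bloch estimate $|f'(w)| \leq \|f\|_{\mathcal{B}_\alpha}/(1-|w|^2)^\alpha$, and substituting $u = t|z|$ yields
$$
|f(z) - f(s_k z)| \leq \|f\|_{\mathcal{B}_\alpha}\int_{s_k|z|}^{|z|}\frac{du}{(1-u^2)^\alpha}.
$$
Here the hypothesis $\alpha<1$ is crucial: bounding $1-u^2 \geq 1-u$, integrating, and applying the subadditivity of $x\mapsto x^{1-\alpha}$ to $1-s_k|z| = (1-|z|) + (1-s_k)|z|$ produces the uniform estimate
$$
|f(z) - f(s_k z)| \leq \frac{(1-s_k)^{1-\alpha}}{1-\alpha}\|f\|_{\mathcal{B}_\alpha},
$$
which tends to $0$ as $k\to\infty$ uniformly in $z\in\mathbb{D}$.

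With this in hand, I would fix any $\lambda\in(0,1)$ and split the supremum defining $\|C_\beta - K_{s_k}\|_{\mathcal{B}_\alpha}$ over $\{|z|\leq\lambda\}$ and $\{\lambda<|z|<1\}$. On the first region the standard mean value argument used in Theorem~\ref{1theorem4.3} yields a bound of order $(1-s_k)/(1-\lambda)^\alpha$. On the second region the uniform estimate above, combined with $(1-|z|^2)^\alpha/|1-z|^\alpha \leq (1+|z|)^\alpha \leq 2^\alpha$ and $|z|>\lambda$, yields a bound of order $2^\alpha(1-s_k)^{1-\alpha}/((1-\alpha)\lambda)$. Keeping $\lambda$ fixed, both quantities vanish as $k\to\infty$, giving $\|C_\beta\|_e = 0$.

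The main subtlety is recognising that the pointwise bound on $f$ from Lemma~\ref{1lemma2.1}(i) carries no information about $k$, so one must instead analyse the radial difference $f(z) - f(s_k z)$. The integrability of $(1-u)^{-\alpha}$ on $[0,1]$, valid precisely when $\alpha<1$, is what makes the refined estimate work; this also explains why the companion case $\beta=\alpha=1$ requires a genuinely different treatment, since there the analogue of the integral diverges logarithmically.
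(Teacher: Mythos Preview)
Your argument is correct but takes a genuinely different route from the paper. The paper does \emph{not} reuse the dilation approximants $K_{s_k}$; instead it perturbs the exponent, approximating $C_\alpha$ by the operators $C_{\alpha-1/n}$, which are compact by Theorem~\ref{1theorem4.3} since $\alpha-\tfrac1n<\alpha<1$. The core of the paper's proof is then the uniform estimate $\sup_{|z|<1}\bigl|1-(1-z)^{1/n}\bigr|\to 0$, established via a somewhat delicate computation with the principal logarithm. Your approach keeps the same approximants as in Theorem~\ref{1theorem4.3} and instead supplies the missing boundary smallness through a refined bound on the radial difference $f(z)-f(s_kz)$, exploiting the integrability of $(1-u)^{-\alpha}$ on $[0,1)$ when $\alpha<1$. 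Your route is arguably more elementary and makes the role of the hypothesis $\alpha<1$ completely transparent; the paper's route has the virtue of reducing cleanly to the already-proven strict-inequality case and is recycled verbatim for the companion result $\beta=1<\alpha$ in Theorem~\ref{1theorem4.5}. As a minor simplification of your argument: since $1-t|z|\geq 1-t$ for $t\in[s_k,1]$, one has directly
\[
\left|\frac{f(z)-f(s_kz)}{z}\right|\leq \|f\|_{\mathcal{B}_\alpha}\int_{s_k}^{1}\frac{dt}{(1-t|z|)^{\alpha}}\leq \|f\|_{\mathcal{B}_\alpha}\int_{s_k}^{1}\frac{dt}{(1-t)^{\alpha}}=\frac{(1-s_k)^{1-\alpha}}{1-\alpha}\|f\|_{\mathcal{B}_\alpha},
\]
which already controls the quotient by $z$ uniformly on $\mathbb{D}$ and makes both the subadditivity step and the split over $\{|z|\leq\lambda\}$ unnecessary.
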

\begin{proof}
 Let $\beta=\alpha-\frac{1}{n}$, then by Theorem \ref{1theorem4.3}, the $\beta$-Ces\`{a}ro operator from $\mathcal{B}_{\alpha}^0$ to $\mathcal{B}_{\alpha}^0$, for $\alpha<1$,
 is a compact linear operator for each $n \in \mathbb{N}$. Note that
 \begin{align}
 \|C_{\alpha} - C_{\alpha-\frac{1}{n}}\|_{\mathcal{B}_\alpha} &= \sup\limits_{\|f\|_{\mathcal{B}_\alpha}\leq 1}\sup\limits_{|z|<1}(1-|z|^2)^\alpha \bigg|\frac{f(z)}{z(1-z)^\alpha}-\frac{f(z)}{z(1-z)^{\alpha-\frac{1}{n}}}\bigg|.\nonumber
\end{align}
 For $f\in\mathcal{B}_\alpha^0$, $\alpha<1$, it is noted in Lemma~\ref{1lemma2.1} that $f$ is a bounded analytic function. Then we see that
 $$
 \|C_{\alpha} - C_{\alpha-\frac{1}{n}}\|_{\mathcal{B}_\alpha}  \leq \bigg\|\frac{f(z)}{z}\bigg\|_{\infty} \sup\limits_{|z|<1}\frac{(1-|z|^2)^\alpha}{(1-|z|)^\alpha} |1-(1-z)^{\frac{1}{n}}|,
 $$
 which is equivalent to
\begin{equation}\label{1eq4.18}  
\|C_{\alpha} - C_{\alpha-\frac{1}{n}}\|_{\mathcal{B}_\alpha}  
                                             \leq 2^\alpha \bigg\|\frac{f(z)}{z}\bigg\|_{\infty} \sup\limits_{|z|<1} |1-(1-z)^{\frac{1}{n}}|.
\end{equation}
For $z\in\mathbb{D}$ and $b$ on the unit circle, we compute
\begin{align*}
 |1-(1-bz)^{\frac{1}{n}}| & = \bigg|1-\exp\bigg(\frac{\text{Log}(1-bz)}{n}\bigg)\bigg| \\
 &= \bigg|1-\exp\bigg(\frac{\ln|1-bz|}{n}+\frac{i\text{Arg}(1-bz)}{n}\bigg)\bigg|\\
                         & = \bigg|1-\exp\bigg(\frac{\ln|1-bz|}{n}\bigg)+\exp\bigg(\frac{\ln|1-bz|}{n}\bigg)\\
                         & \hspace*{2cm}-\exp\bigg(\frac{\ln|1-bz|}{n}+\frac{i\text{Arg}(1-bz)}{n}\bigg)\bigg|.
\end{align*}
By triangle inequality, we have
\begin{align}
\nonumber|1-(1-bz)^{\frac{1}{n}}|  \leq & \bigg|1-\exp\bigg(\frac{\ln|1-bz|}{n}\bigg)\bigg|\\
&+\exp\bigg(\frac{\ln|1-bz|}{n}\bigg)\bigg|1-\exp\bigg(\frac{i\text{Arg}(1-bz)}{n}\bigg)\bigg|\nonumber\\
    \leq& \bigg|1-\exp\bigg(\frac{\ln 2}{n}\bigg)\bigg|\\
   &+\exp\bigg(\frac{\ln2}{n}\bigg)\bigg|1-\exp\bigg(\frac{i\text{Arg}(1-bz)}{n}\bigg)\bigg|.\nonumber
\end{align}
But
\begin{align}
 \bigg|1-\exp\bigg(\frac{i\text{Arg}(1-bz)}{n}\bigg)\bigg| =&\Big(\big(\cos(\text{Arg}(1-bz)/n)-1\big)^2\\
  &+ \sin^2 \big(\text{Arg}(1-bz)/n\big)\Big)^{1/2}.\nonumber
\end{align}
For $z\in \mathbb{D}$, it is clear that $-\pi/2\leq\text{Arg}(1-bz)\leq\pi/2$. It follows that
\begin{align}
 \bigg|1-\exp\bigg(\frac{i\text{Arg}(1-bz)}{n}\bigg)\bigg| &\leq \Big(\big(\cos(\pi/2n)-1\big)^2 + \sin^2 \big(\pi/2n\big)\Big)^{1/2},\nonumber
\end{align}
and hence
\begin{align*}
|1-(1-bz)^{\frac{1}{n}}|  \leq& \bigg|1-\exp\bigg(\frac{\ln 2}{n}\bigg)\bigg|\\&+\exp\bigg(\frac{\ln2}{n}\bigg)\Big(\big(\cos(\pi/2n)-1\big)^2 + \sin^2 \big(\pi/2n\big)\Big)^{1/2}.
\end{align*}
The right hand side quantity is independent of $z$ and tends to 0 as $n\rightarrow \infty$. Then we obtain 
\begin{equation} \label{1eq4.19}
 \lim \limits_{n\rightarrow \infty}\sup\limits_{|z|<1} |1-(1-bz)^{\frac{1}{n}}| = 0.
\end{equation}
From \eqref{1eq4.18} and \eqref{1eq4.19}, it follows that
$$
 \lim \limits_{n\rightarrow \infty}\|C_{\alpha} - C_{\alpha-\frac{1}{n}}\|_{\mathcal{B}_\alpha}  = 0,
$$
completing the proof.
\end{proof}
\begin{corollary}\label{1corollary4.2}
The essential norm of the generalized $\beta$-Ces\`{a}ro operator from $\mathcal{B}_{\alpha}^0$ to $\mathcal{B}_{\alpha}^0$ is $0$, for $\beta=\alpha<1$.
\end{corollary}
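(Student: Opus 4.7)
My plan is to mirror the proof of Theorem~\ref{1theorem4.4} at the level of the generalized operator, using the fact that the bounded analytic term $h$ in \eqref{1eq1.2} cancels when two $g_\beta$'s at different exponents are subtracted. By Corollary~\ref{1corollary4.1} together with Remark~\ref{1remark4.1}, $C_{g_{\alpha-1/n}}$ is a compact operator on $\mathcal{B}_\alpha^0$ for every $n\in\mathbb{N}$, since $\alpha-1/n<\alpha<1$. Because the set of compact operators is closed in the operator norm topology, it suffices to show
$$\lim_{n\to\infty}\|C_{g_\alpha}-C_{g_{\alpha-1/n}}\|_{\mathcal{B}_\alpha}=0;$$
this will yield compactness of $C_{g_\alpha}$, and then, again by Remark~\ref{1remark4.1}, its essential norm is $0$.

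For the norm estimate, note that the derivative of $(C_{g_\alpha}-C_{g_{\alpha-1/n}})(f)$ at $z$ is $f(z)\bigl(g_\alpha(z)-g_{\alpha-1/n}(z)\bigr)/z$, and the function $h$ appears identically in both $g_\alpha$ and $g_{\alpha-1/n}$, so it drops out, leaving
$$g_\alpha(z)-g_{\alpha-1/n}(z)=\sum_{j=1}^{k}\frac{a_j}{(1-b_j z)^\alpha}\bigl(1-(1-b_j z)^{1/n}\bigr).$$
Using $|1-b_j z|\geq 1-|z|$, the factor $(1-|z|)^\alpha/|1-b_j z|^\alpha$ is bounded by $1$, hence $(1-|z|^2)^\alpha/|1-b_j z|^\alpha\leq 2^\alpha$. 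Since $\alpha<1$, Lemma~\ref{1lemma2.1}(i) combined with \eqref{1eq2.1} provides a constant $M_\alpha$, depending only on $\alpha$, such that $|f(z)/z|\leq M_\alpha\|f\|_{\mathcal{B}_\alpha}$ uniformly in $z\in\mathbb{D}$; this is the same reduction used implicitly in the proof of Theorem~\ref{1theorem4.4}. Combining these estimates yields, for $\|f\|_{\mathcal{B}_\alpha}\leq 1$,
$$(1-|z|^2)^\alpha\bigg|\frac{f(z)\bigl(g_\alpha(z)-g_{\alpha-1/n}(z)\bigr)}{z}\bigg|\leq 2^\alpha M_\alpha\sum_{j=1}^{k}|a_j|\,\bigl|1-(1-b_j z)^{1/n}\bigr|.$$

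The argument now reduces to the pointwise estimate $\sup_{z\in\mathbb{D}}|1-(1-b_j z)^{1/n}|\to 0$ as $n\to\infty$ for each $j\in\{1,\dots,k\}$, which is exactly what was established in the proof of Theorem~\ref{1theorem4.4} via the decomposition of $(1-b_j z)^{1/n}$ into real-modulus and imaginary-argument contributions, using the bound $-\pi/2\leq\mathrm{Arg}(1-b_j z)\leq\pi/2$ on the unit disk. Since the sum over $j$ is finite, taking suprema over $z$ and then over $\|f\|_{\mathcal{B}_\alpha}\leq 1$ gives the claimed operator-norm convergence. The main obstacle is not any single inequality but the bookkeeping required to push the linear combination $\sum_{j=1}^{k}a_j(1-b_j z)^{-\alpha}$ through the argument uniformly in the positions $b_j$ on the unit circle; once the constant $M_\alpha$ (coming from the convergence of $\int_0^1(1-t^2)^{-\alpha}dt$ for $\alpha<1$) is in hand, the remainder is a term-by-term reuse of Theorem~\ref{1theorem4.4}.
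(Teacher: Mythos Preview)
Your proof is correct and follows essentially the same approach as the paper: approximate $C_{g_\alpha}$ by the compact operators $C_{g_{\alpha-1/n}}$ (via Corollary~\ref{1corollary4.1}), expand the difference using the structure of $g_\beta$ from \eqref{1eq1.2} as in Corollary~\ref{1corollary2.2}, and then invoke the uniform estimate \eqref{1eq4.19} from Theorem~\ref{1theorem4.4}. Your explicit observation that the $h$-terms cancel and your use of \eqref{1eq2.1} to obtain a uniform bound on $|f(z)/z|$ make the argument cleaner than the paper's sketch, but the strategy is identical.
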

\begin{proof}
 To obtain the essential norm of the generalized $\beta$-Ces\`{a}ro operator, for $\beta=\alpha<1$, we compute
 \begin{align}
 \|C_{g_\alpha} - C_{g_{\alpha-\frac{1}{n}}}\|_{\mathcal{B}_\alpha} &= \sup\limits_{\|f\|_{\mathcal{B}_\alpha}\leq 1}\sup\limits_{|z|<1}(1-|z|^2)^\alpha \bigg|\frac{f(z)g_\alpha(z)}{z}-\frac{f(z)g_{\alpha-\frac{1}{n}}(z)}{z}\bigg|.\nonumber
\end{align}
To proceed further, we follow the steps given in the proofs of Corollary \ref{1corollary2.2} and Theorem \ref{1theorem4.4}.
\end{proof}

\begin{theorem}\label{1theorem4.5}
The essential norm of the $\beta$-Ces\`{a}ro operator from $\mathcal{B}_{\alpha}^0$ to $\mathcal{B}_{\alpha}^0$ is $0$, for $\beta=1<\alpha$.
\end{theorem}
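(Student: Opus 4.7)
My plan is to mimic the strategy used in Theorem~\ref{1theorem4.4}: approximate the Ces\`aro operator $C_1$ by the operators $C_{1-\frac{1}{n}}$ and show that the operator norm difference tends to $0$. For each $n\in\mathbb{N}$, the exponent $\beta_n=1-\frac{1}{n}$ satisfies $\beta_n<1<\alpha$, so Theorem~\ref{1theorem4.3} (together with Remark~\ref{1remark4.1}) guarantees that $C_{1-\frac{1}{n}}$ is compact from $\mathcal{B}_\alpha^0$ to itself. Since the compact operators form a norm-closed subspace, showing $\|C_1-C_{1-\frac{1}{n}}\|_{\mathcal{B}_\alpha}\to 0$ will immediately force $C_1$ to be compact, which is equivalent to $\|C_1\|_e=0$ by Remark~\ref{1remark4.1}.

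The elementary identity
$$
\frac{1}{1-z}-\frac{1}{(1-z)^{1-1/n}}=\frac{1-(1-z)^{1/n}}{1-z}
$$
lets me write
$$
(1-|z|^2)^{\alpha}\left|\frac{f(z)}{z(1-z)}-\frac{f(z)}{z(1-z)^{1-1/n}}\right|=(1-|z|^2)^{\alpha}\left|\frac{f(z)}{z(1-z)}\right|\,|1-(1-z)^{1/n}|.
$$
The heart of the argument is showing that the first factor on the right admits a bound independent of $f$ whenever $\|f\|_{\mathcal{B}_\alpha}\leq 1$. Using Lemma~\ref{1lemma2.1}(iii), $|1-z|\geq 1-|z|$, and factoring $(1-|z|^2)^\alpha=(1-|z|)^\alpha(1+|z|)^\alpha$, one obtains
$$
(1-|z|^2)^{\alpha}\left|\frac{f(z)}{z(1-z)}\right|\leq \frac{(1+|z|)^{\alpha}}{(\alpha-1)|z|}\bigl(1-(1-|z|)^{\alpha-1}\bigr)\|f\|_{\mathcal{B}_\alpha}.
$$
The function $t\mapsto(1-(1-t)^{\alpha-1})/t$ extends continuously to $[0,1]$ (with value $\alpha-1$ at $t=0$), so it is bounded there; combining this with $(1+|z|)^\alpha\leq 2^\alpha$ yields a constant $K=K(\alpha)$ such that $(1-|z|^2)^\alpha|f(z)/(z(1-z))|\leq K$ uniformly on $\{\|f\|_{\mathcal{B}_\alpha}\leq 1\}\times\mathbb{D}$.

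Consequently,
$$
\|C_1-C_{1-\frac{1}{n}}\|_{\mathcal{B}_\alpha}\leq K\sup_{|z|<1}|1-(1-z)^{1/n}|,
$$
and the supremum on the right tends to $0$ as $n\to\infty$ by the estimate already established in the proof of Theorem~\ref{1theorem4.4} (see equation~\eqref{1eq4.19}, applied with $b=1$). This completes the argument. The main obstacle, and the place where the $\beta=1$ case differs from $\beta=\alpha<1$, is controlling the factor $(1-|z|^2)^\alpha|f(z)/(z(1-z))|$ near both $|z|=1$ (where a naive bound would diverge) and $|z|=0$ (where the apparent $1/|z|$ must be absorbed); the algebraic cancellation between Lemma~\ref{1lemma2.1}(iii) and the $(1-|z|)^{\alpha-1}$ produced by factoring $(1-|z|^2)^\alpha$ is exactly what makes this uniform bound possible.
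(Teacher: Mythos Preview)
Your proof is correct and follows the same overall strategy as the paper: approximate $C_1$ by the compact operators $C_{1-1/n}$ (via Theorem~\ref{1theorem4.3}) and use the uniform estimate \eqref{1eq4.19} together with Lemma~\ref{1lemma2.1}(iii) to show $\|C_1-C_{1-1/n}\|_{\mathcal{B}_\alpha}\to 0$. The one structural difference is that the paper splits the supremum over $|z|\leq\lambda$ and $|z|>\lambda$ in order to control the $1/|z|$ factor near the origin, whereas you handle the whole disk at once by observing that $t\mapsto(1-(1-t)^{\alpha-1})/t$ extends continuously to $t=0$ with value $\alpha-1$; this makes the potential singularity at $z=0$ disappear and yields a single global constant $K(\alpha)$. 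Your route is slightly cleaner and avoids the auxiliary parameter $\lambda$, but the underlying mechanism---cancellation between the growth bound of Lemma~\ref{1lemma2.1}(iii) and the factor $(1-|z|)^{\alpha-1}$ coming from $(1-|z|^2)^\alpha/(1-|z|)$---is identical to the paper's.
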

\begin{proof}
 Let $\beta=1-\frac{1}{n}$, then by Theorem \ref{1theorem4.3}, the $\beta$-Ces\`{a}ro operator from $\mathcal{B}_{\alpha}^0$ to $\mathcal{B}_{\alpha}^0$, for $\alpha>1$,
 is a compact linear operator for each $n \in \mathbb{N}$. 
 
 Let $\lambda \in (0,1)$ be fixed for the moment. Then we have
 \begin{align}\label{1eq4.20}
 \nonumber\|C_1 - C_{1-\frac{1}{n}}\|_{\mathcal{B}_\alpha} 
 =& \sup\limits_{\|f\|_{\mathcal{B}_\alpha}\leq 1}\sup\limits_{|z|<1}(1-|z|^2)^\alpha \bigg|
 \frac{f(z)}{z(1-z)}-\frac{f(z)}{z(1-z)^{1-\frac{1}{n}}}\bigg|\\
 \leq& \sup\limits_{\|f\|_{\mathcal{B}_\alpha}\leq 1}\sup\limits_{|z|\leq\lambda}(1-|z|^2)^\alpha \bigg|
 \frac{f(z)}{z(1-z)}-\frac{f(z)}{z(1-z)^{1-\frac{1}{n}}}\bigg|\\
 &+ \sup\limits_{\|f\|_{\mathcal{B}_\alpha}\leq 1}\sup\limits_{|z|>\lambda}(1-|z|^2)^\alpha \bigg|
 \frac{f(z)}{z(1-z)}-\frac{f(z)}{z(1-z)^{1-\frac{1}{n}}}\bigg|.  \nonumber      
\end{align}
For $\alpha>1$, from \eqref{1eq2.5}, we have
	\begin{align}
	|f(z)|&\leq \frac{\|f\|_{\mathcal{B}_\alpha}}{\alpha-1} \bigg(\frac{1}{(1-|z|)^{\alpha-1}}-1\bigg)\nonumber.
	\end{align}
	We obtain 
\begin{align*}
& \sup\limits_{\|f\|_{\mathcal{B}_\alpha}\leq 1}\sup\limits_{|z|>\lambda}(1-|z|^2)^\alpha \bigg|\frac{f(z)}{z(1-z)}-\frac{f(z)}{z(1-z)^{1-\frac{1}{n}}}\bigg|\\
                    & \hspace*{1cm} \leq  \sup\limits_{\|f\|_{\mathcal{B}_\alpha}\leq 1} \sup\limits_{|z|>\lambda}\frac{(1-|z|^2)^\alpha}{|z|(1-|z|)} |1-(1-z)^{\frac{1}{n}}||f(z)|\\
                    &\hspace*{1cm}\leq \frac{2^\alpha}{\lambda} \sup\limits_{|z|>\lambda} |1-(1-z)^{\frac{1}{n}}|\Big(1-(1-|z|)^{\alpha-1}\Big)\\
                    &\hspace*{1cm}\leq \frac{2^\alpha}{\lambda} \sup\limits_{|z|>\lambda} |1-(1-z)^{\frac{1}{n}}|,
 \end{align*}
which  tends to $0$ as $n\rightarrow\infty$ due to \eqref{1eq4.19}. The first term of the right hand quantity of \eqref{1eq4.20} also tends to $0$ as $n\rightarrow\infty$. It yields	
$$
 \lim \limits_{n\rightarrow \infty}\|C_1 - C_{1-\frac{1}{n}}\|_{\mathcal{B}_\alpha}  = 0,
$$
which concludes the proof.
\end{proof}
\begin{corollary}
	The essential norm of the generalized $\beta$-Ces\`{a}ro operator from $\mathcal{B}_{\alpha}^0$ to $\mathcal{B}_{\alpha}^0$ is $0$, for $\beta=1<\alpha$.
\end{corollary}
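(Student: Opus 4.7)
The plan is to mirror the proof of Theorem \ref{1theorem4.5}, combined with the sum-decomposition trick used in passing from Theorem \ref{1theorem4.4} to Corollary \ref{1corollary4.2}. Setting $\beta_n = 1 - 1/n$, Corollary \ref{1corollary4.1} ensures that $C_{g_{\beta_n}} : \mathcal{B}_\alpha^0 \to \mathcal{B}_\alpha^0$ is compact for every $n \in \mathbb{N}$, so it suffices to prove $\|C_{g_1} - C_{g_{\beta_n}}\|_{\mathcal{B}_\alpha} \to 0$ as $n\to\infty$ and then invoke the definition of the essential norm.

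First, substituting the explicit form \eqref{1eq1.2} of $g_\beta$, the bounded analytic term $h$ is independent of $\beta$ and cancels in the difference, leaving only the pole contributions
\[
(C_{g_1}-C_{g_{\beta_n}})(f)(z) = \int_0^z \frac{f(w)}{w}\sum_{j=1}^{k} a_j\bigg(\frac{1}{1-b_j w}-\frac{1}{(1-b_j w)^{1-1/n}}\bigg)\,dw.
\]
Differentiating and applying the triangle inequality in the $\mathcal{B}_\alpha$-seminorm reduces the task to proving that, for each fixed index $j$,
\[
\sup_{\|f\|_{\mathcal{B}_\alpha}\leq 1}\sup_{|z|<1}(1-|z|^2)^{\alpha}\bigg|\frac{f(z)}{z(1-b_j z)}\bigg|\,\bigl|1-(1-b_j z)^{1/n}\bigr|\longrightarrow 0.
\]

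Second, since each $b_j$ lies on the unit circle, $|b_j z|=|z|$ and $|1-b_j z|\geq 1-|z|$, so the structural bounds used in Theorem \ref{1theorem4.5} carry over with $z$ replaced by $b_j z$. Fixing $\lambda\in(0,1)$ and splitting the inner supremum into $|z|\leq\lambda$ and $|z|>\lambda$, on the outer region I apply Lemma \ref{1lemma2.1}(iii) together with $|1-b_j z|\geq 1-|z|$ to bound the contribution by a constant multiple of $\sup_{|z|>\lambda}|1-(1-b_j z)^{1/n}|$, which vanishes as $n\to\infty$ by \eqref{1eq4.19}; that estimate was written for an arbitrary $b$ on the unit circle, so it applies to each $b_j$. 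On the inner region $|z|\leq\lambda$ the factor $(1-b_j z)^{-1}$ is uniformly bounded and $|1-(1-b_j z)^{1/n}|\to 0$ uniformly, so that piece vanishes too. Summing the $k$ contributions and multiplying by $\sum_{j=1}^{k}|a_j|$ then yields the claim.

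The main obstacle is only bookkeeping: I must check that the outer-region estimate in Theorem \ref{1theorem4.5}, originally carried out for the specific pole at $z=1$, remains valid when the pole is shifted to $b_j^{-1}$. This is immediate because the derivation of \eqref{1eq4.19} only used $|b z|=|z|$ and the observation $-\pi/2\leq \operatorname{Arg}(1-bz)\leq \pi/2$ for $z\in\mathbb{D}$, both of which continue to hold for every $b_j$ on the unit circle. No genuinely new analytic input is needed beyond what is already recorded in Theorem \ref{1theorem4.5} and Corollary \ref{1corollary4.1}.
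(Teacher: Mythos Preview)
Your proposal is correct and follows essentially the same approach as the paper, which simply refers back to the proofs of Corollary~\ref{1corollary4.2} and Theorem~\ref{1theorem4.5}: approximate $C_{g_1}$ by the compact operators $C_{g_{1-1/n}}$, cancel the $h$-part, split each remaining pole term into an inner and outer region, and use the uniform estimate~\eqref{1eq4.19} (valid for any $b$ on the unit circle) to make the difference vanish. Your explicit remark that \eqref{1eq4.19} was proved for an arbitrary unimodular $b$, and hence transfers to each $b_j$, is precisely the observation the paper leaves implicit.
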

\begin{proof}
 The proof follows the steps as in the proofs of Corollary \ref{1corollary4.2} and Theorem \ref{1theorem4.5}.
\end{proof}

\section{\textbf{Spectral Properties of the $\beta$-Ces\`{a}ro operators}}\label{1sec5}
In this section, we compute the (point) spectrum of the generalized $\beta$-Ces\`{a}ro operators 
mapping from $\mathcal{B}_{\alpha}^0 $ to $\mathcal{B}_{\alpha}^0 $, for $\alpha>0$.
For the concept of spectral analysis, we refer to \cite{Bela90,Kreyszig01,Limaye96}.

\begin{theorem}\label{1theorem5.1} For $\alpha\geq1$, the point spectrum of the generalized Ces\`{a}ro operator from $\mathcal{B}_{\alpha}^0 $ to $\mathcal{B}_{\alpha}^0 $ is
 \begin{align*}
 \sigma_P(C_{g_1}) &= \bigg \lbrace\frac{g_1(0)}{n},n \in \mathbb{N}: {\rm Re}\,\bigg(\frac{a_j}{g_1(0)}\bigg)\leq0,1\leq j\leq k\bigg \rbrace.
 \end{align*}
\end{theorem}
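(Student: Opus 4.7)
The plan is to turn the eigenvalue equation $C_{g_1}f = \lambda f$ into a first-order linear ODE, solve it in closed form, and then detect when the resulting candidate eigenfunction actually lives in $\mathcal{B}_{\alpha}^0$. Differentiating $\int_0^z f(w)g_1(w)/w\, dw = \lambda f(z)$ immediately yields
\begin{equation*}
\lambda z f'(z) = g_1(z)\, f(z).
\end{equation*}
Since every $f \in \mathcal{B}_\alpha^0$ vanishes at $0$, I write $f(z) = c_n z^n + c_{n+1} z^{n+1} + \cdots$ with $c_n \neq 0$ and $n \geq 1$; matching the coefficients of $z^n$ on the two sides of the ODE forces $\lambda n = g_1(0)$. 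Hence the only possible eigenvalues are $\lambda_n = g_1(0)/n$ for $n \in \mathbb{N}$, and each has geometric multiplicity at most one. The value $\lambda = 0$ is excluded because $C_{g_1}f = 0$ would force $f(z) g_1(z)/z \equiv 0$ and hence $f \equiv 0$.

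For each $\lambda_n$, I would solve the ODE by separation of variables after the partial-fraction decomposition
\begin{equation*}
\frac{g_1(z)}{z} \;=\; \frac{g_1(0)}{z} \;+\; \sum_{j=1}^{k} \frac{a_j b_j}{1 - b_j z} \;+\; \frac{h(z) - h(0)}{z},
\end{equation*}
in which the last summand is bounded and analytic on $\mathbb{D}$ (its antiderivative starting from $0$ is Lipschitz, hence bounded on $\overline{\mathbb{D}}$). Integrating $f'/f = g_1/(\lambda_n z)$ produces, up to a nonzero constant, the explicit candidate
\begin{equation*}
f_n(z) \;=\; z^n \prod_{j=1}^{k} (1 - b_j z)^{-n a_j / g_1(0)} \, e^{H(z)},
\end{equation*}
with $H$ analytic and bounded on $\overline{\mathbb{D}}$. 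The whole problem therefore reduces to asking for which $n \in \mathbb{N}$ this $f_n$ belongs to $\mathcal{B}_{\alpha}^0$.

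The last step is the heart of the matter. Because $e^{H}$ is bounded above and bounded below on $\overline{\mathbb{D}}$ and $z^n$ is harmless, the $\alpha$-Bloch test on $f_n$ reduces to controlling $f_n'$ near each $b_j^{-1}$ on the unit circle, where the dominant factor in $f_n'(z)$ is $(1 - b_j z)^{-n a_j / g_1(0) - 1}$. Using the standard comparison $1 - |z|^2 \leq 2\,|1 - b_j z|$, the quantity $(1 - |z|^2)^{\alpha} |f_n'(z)|$ is bounded by a constant multiple of $|1 - b_j z|^{\alpha - 1 - \mathrm{Re}(n a_j / g_1(0))}$. The hypothesis $\mathrm{Re}(a_j/g_1(0)) \leq 0$ combined with $\alpha \geq 1$ keeps the exponent nonnegative for every $n \in \mathbb{N}$ and every $j$, so $f_n$ is always in $\mathcal{B}_{\alpha}^0$ and each $\lambda_n$ is indeed an eigenvalue. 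Conversely, a tangential/non-tangential approach $z = b_j^{-1}(1 - r e^{i\theta})$ with $|\theta| < \pi/2$ gives a matching lower bound and shows that if the real-part condition fails at some $j$, then $f_n \notin \mathcal{B}_{\alpha}^0$ and $\lambda_n$ drops out of $\sigma_P(C_{g_1})$. I expect the main technical obstacle to be this sharpness direction: one must verify that the different singular factors $(1-b_jz)^{-na_j/g_1(0)}$ at the distinct points $b_j$ on the unit circle cannot interact to rescue membership when one exponent goes bad, and this is precisely where the assumption that the $b_j$ are distinct enters essentially.
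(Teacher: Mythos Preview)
Your approach is essentially the same as the paper's: both differentiate the eigenvalue equation to a first-order linear ODE, read off the candidate eigenvalues $g_1(0)/n$ from the lowest-order coefficient, solve explicitly as $z^n$ times a product of powers $(1-b_jz)^{-na_j/g_1(0)}$ times a bounded exponential, and then test $\mathcal{B}_\alpha$-membership by controlling the real parts of those exponents together with $\alpha\ge 1$. You actually go a bit further than the paper by sketching the necessity direction (using a non-tangential approach to each $b_j^{-1}$ to show membership fails when $\mathrm{Re}(a_j/g_1(0))>0$), which the paper's proof leaves implicit.
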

 \begin{proof} 
 We adopt the idea of the proof partially from the work of Albrecht, Miller, and Neumann \cite{Albrecht05}.
 Suppose that $f\in \mathcal{B}_{\alpha}^0\setminus\{0\}$. Write $f=z^n\psi$, where $n\geq1$ and $\psi$ is an analytic function in $\mathbb{D}$ with $\psi(0)\neq 0$. 
 We desire to show that $\psi\in\mathcal{B}_\alpha$.
 
 Suppose $(\lambda I-C_{g_\beta})f=0$. We obtain that
 \begin{align*}
  \lambda f(z)-\int_{0}^{z} \frac{f(w)}{w}g_\beta(w)dw &= 0,
 \end{align*}
 which is equivalent to
 \begin{align*}
   \lambda z^n\psi(z)-\int_{0}^{z} \frac{w^n\psi(w)}{w}g_\beta(w)dw &= 0.
\end{align*}
 Differentiating both sides with respect to nonzero $ z \in \mathbb{D}$, we obtain
 \begin{align*}
  (\lambda n z^{n-1}\psi(z)+ \lambda z^{n}\psi'(z)) - z^{n-1}\psi(z)g_\beta(z) &=0,
  \end{align*}
  which is equivalent to
 \begin{align}\label{1eq5.1}
  (\lambda n \psi(z)+ \lambda z\psi'(z)) - \psi(z)g_\beta(z) &=0.
 \end{align}
 By the continuity, \eqref{1eq5.1} also holds at $0$. Then at $z=0$ we have
 \begin{align*}
  \lambda n\psi(0) - \psi(0)g_\beta(0) &=0.
 \end{align*}
 On simplification, we obtain
 \begin{align*}
 \lambda &=\frac{g_\beta(0)}{n}.
 \end{align*}
 Then we have the possible point spectrum
 \begin{align*}
  \sigma_P(C_{g_\beta}) \subseteq \bigg \lbrace\frac{g_\beta(0)}{n}:n \in \mathbb{N}\bigg \rbrace.
 \end{align*}
If $g_\beta(0)=0$, then $\sigma_P(C_{g_\beta}) = \emptyset$.

We further need to obtain the condition under which the generalized Ces\`{a}ro operator have eigenvector corresponding to $g_1(0)/n$.
If $g_1(0)\neq 0$, then with $g_1(0)/n$, \eqref{1eq5.1} has a solution $\psi=c\psi_n$, where
\begin{align*}
 \psi_n(z)              &=\exp\bigg(\frac{n}{g_1(0)}\int_{0}^{z}\frac{g_1(w)-g_1(0)}{w}dw\bigg)\\
                        &=\exp\bigg(\frac{n}{g_1(0)}\int_{0}^{z}\frac{\sum_{j=1}^{k}\frac{a_j}{1-b_{j} w}+h(w)-\sum_{j=1}^{k}a_j-h(0)}{w}dw\bigg).
                        \end{align*}
                        This simplifies to
\begin{equation}\label{1eq5.2}          
\psi_n(z)              
=\exp\bigg(\frac{n}{g_1(0)}\sum_{j=1}^{k}\int_{0}^{z}\frac{a_{j} b_{j}}{1-b_{j} w}dw\bigg)\exp\bigg(\frac{n}{g_1(0)}\int_{0}^{z}\frac{h(w)-h(0)}{w}dw\bigg).
\end{equation}
Set
$$\phi(z) =\exp\bigg(\frac{n}{g_1(0)}\sum_{j=1}^{k}\int_{0}^{z}\frac{a_{j} b_{j}}{1-b_{j} w}dw\bigg)$$
and 
$$\eta(z)=\exp\bigg(\frac{n}{g_1(0)}\int_{0}^{z}\frac{h(w)-h(0)}{w}dw\bigg).
$$ 
Then we have by Schwarz Lemma
\begin{align}\label{1eq5.3}
\exp\bigg(-2\bigg|\frac{n}{g_1(0)}\bigg|\|h\|_\infty\bigg)\leq |\eta(z)|\leq \exp\bigg(2\bigg|\frac{n}{g_1(0)}\bigg|\|h\|_\infty\bigg). 
\end{align}
Next we show that $\phi$ is bounded. For this, we compute
\begin{align*}
 |\phi(z)|        =& \bigg|\exp\bigg(\frac{n}{g_1(0)}\sum_{j=1}^{k}\int_{0}^{z}\frac{a_{j} b_{j}}{1-b_{j} w}dw\bigg)\bigg|\\
                  =& \bigg|\exp\bigg(\sum_{j=1}^{k} -n\bigg(\cfrac{a_{j}}{g_1(0)}\bigg) \log(1-b_{j}z)\bigg)\bigg|.
                  \end{align*}
                  This is equivalent to
$$
 |\phi(z)|  = \left|\prod_{j=1}^{k}(1-b_{j}z)^{-n\bigg(\cfrac{a_{j}}{g_1(0)}\bigg)} \right|.
$$
To prove the boundedness of $ |\phi(z)| $, it is sufficient to show that for each $j$, $1\le j\le k$, 
the quantity $(1-b_{j}z)^{-n\big(\frac{a_{j}}{g_1(0)}\big)}$ is bounded in $\mathbb{D}$.
For this purpose, we see that
\begin{align*}
&(1-b_{j}z)^{-n\big(\frac{a_{j}}{g_1(0)}\big)}\\
   &\hspace{1cm}=\exp\bigg(-n\frac{a_{j}}{g_1(0)}\log(1-b_{j}z)\bigg) \\
   &\hspace{1cm}=\exp\bigg(-n\frac{a_{j}}{g_1(0)}\big(\ln|1-b_{j}z|+i \arg(1-b_{j}z)\big)\bigg)\\
   &\hspace{1cm}=\exp\bigg(-n\frac{a_{j}}{g_1(0)}\big(\ln|1-b_{j}z|\big)\bigg)\exp\bigg(-n\frac{a_{j}}{g_1(0)}i \arg(1-b_{j}z)\bigg).
\end{align*}
Now,
\begin{align*}
&\bigg|\exp\bigg(-n\frac{a_{j}}{g_1(0)}\big(\ln|1-b_{j}z|\big)\bigg)\bigg|\\
&\hspace*{1cm}=\bigg|\exp\bigg(\big(-n\ln|1-b_{j}z|\big)\bigg({\rm Re}\, \bigg(\frac{a_{j}}{g_1(0)}\bigg)+i\ {\rm Im}\,\bigg( \frac{a_{j}}{g_1(0)}\bigg)\bigg)\bigg)\bigg|\\
                                 &\hspace*{1cm}=\bigg|\exp\bigg(\big(-n\ln|1-b_{j}z|\big){\rm Re}\, \bigg(\frac{a_{j}}{g_1(0)}\bigg)\bigg)\bigg|.
\end{align*}
If ${\rm Re}\, \Big(\frac{a_{j}}{g_1(0)}\Big)\leq 0$, then $\Big|\exp\Big(\big(-n\ln|1-b_{j}z|\big){\rm Re}\, \Big(\frac{a_{j}}{g_1(0)}\Big)\Big)\Big|$ is bounded.
This implies that $\phi$ is bounded analytic function in $\mathbb{D}$.

Differentiating $\psi_n(z)$ with respect to $z$, we obtain
\begin{align*}
\psi_n^{'}(z)=&\frac{n}{g_1(0)}\Bigg(\frac{\sum_{j=1}^{k}\frac{a_j}{1-b_{j} z}+h(z)-\sum_{j=1}^{k}a_j-h(0)}{z}\Bigg)\psi_n(z)\nonumber\\
             =&\frac{n}{g_1(0)}\bigg(\sum_{j=1}^{k}\frac{a_{j} b_{j}}{1-b_{j} z}+\frac{h(z)-h(0)}{z}\bigg)\psi_n(z).
\end{align*}
Write 
$$
\rho(z)=\frac{n}{g_1(0)}\Big(\sum_{j=1}^{k}\frac{a_{j} b_{j}}{1-b_{j} z}+\frac{h(z)-h(0)}{z}\Big).
$$ 
Now we need to check under what condition 
$\psi_n(z) \in \mathcal{B}_{\alpha}$. For this purpose, we need to calculate $(1-|z|^2)^{\alpha}|\psi_n^{'}(z)|$.
Now
\begin{align*}
(1-|z|^2)^{\alpha}|\psi_n^{'}(z)|=&(1-|z|^2)^{\alpha}|\rho(z)\psi_n(z)|\\
                                 =&(1-|z|^2)^{\alpha}|\rho(z)\phi(z)\eta(z)|\\
                                 =&(1-|z|^2)^{\alpha}|\rho(z)||\phi(z)||\eta(z)|.
\end{align*}
For $\alpha \geq 1$,
$$
\sup\limits_{z \in \mathbb{D}}(1-|z|^2)^{\alpha}|\rho(z)|<\infty 
$$
and for $\alpha < 1$, this is unbounded as can be seen from Example \ref{1ex2.6}.
 As we have already seen that $\phi$ and $\eta$ are bounded analytic functions in $\mathbb{D}$, we obtain 
 $$
\sup\limits_{z \in \mathbb{D}}(1-|z|^2)^{\alpha}|\psi(z)|<\infty,
$$
as desired to have $f\in\mathcal{B}_\alpha^0$, $\alpha\ge 1$.
\end{proof}

If $g_1(w)=h(w)$ for each $w \in \mathbb{D}$ as in \eqref{1eq1.2}, 
then by \eqref{1eq5.2} and \eqref{1eq5.3} we establish
\begin{theorem} \label{1theorem5.2}
 The point spectrum of the generalized Alexander operator from $\mathcal{B}_{\alpha}^0$ to $\mathcal{B}_{\alpha}^0 $ is
\begin{align*}
\sigma_P(C_{g_0}) &= \bigg \lbrace\frac{g_0(0)}{n},n \in \mathbb{N}\bigg \rbrace.
\end{align*}
\end{theorem}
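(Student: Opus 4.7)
\medskip
\noindent\textbf{Proof proposal.} The plan is to adapt the proof of Theorem~\ref{1theorem5.1} to the case $\beta=0$, where the absence of polar factors in $g_0$ will both make every $\lambda=g_0(0)/n$ an eigenvalue and remove the restriction $\alpha\geq 1$. First I would repeat the opening reduction of Theorem~\ref{1theorem5.1}: assume $f\in\mathcal{B}_\alpha^0\setminus\{0\}$ satisfies $(\lambda I-C_{g_0})f=0$, factor $f(z)=z^n\psi(z)$ with $n\geq 1$ and $\psi(0)\neq 0$, differentiate, and divide by $z^{n-1}$ to obtain
\begin{equation*}
  \lambda n\,\psi(z)+\lambda z\,\psi'(z)-\psi(z)\,g_0(z)=0.
\end{equation*}
Evaluating at $z=0$ forces $\lambda=g_0(0)/n$, so $\sigma_P(C_{g_0})\subseteq\{g_0(0)/n:n\in\mathbb{N}\}$ exactly as before.

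Next I would exhibit an eigenfunction for each such $\lambda$. Solving the ODE for $\psi$ (and taking $g_0(0)\neq 0$; otherwise $\sigma_P(C_{g_0})=\emptyset$) gives, up to a constant,
\begin{equation*}
 \psi_n(z)=\exp\!\bigg(\frac{n}{g_0(0)}\int_0^z\frac{g_0(w)-g_0(0)}{w}\,dw\bigg).
\end{equation*}
The crucial simplification at $\beta=0$ is that $g_0(w)=\sum_{j=1}^k a_j+h(w)$, so $g_0(w)-g_0(0)=h(w)-h(0)$ and hence
\begin{equation*}
 \psi_n(z)=\exp\!\bigg(\frac{n}{g_0(0)}\int_0^z\frac{h(w)-h(0)}{w}\,dw\bigg),
\end{equation*}
which is exactly the factor $\eta$ from the proof of Theorem~\ref{1theorem5.1} with no $\phi$-factor. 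By the Schwarz lemma applied to $h-h(0)$, the integrand $(h(w)-h(0))/w$ is a bounded analytic function on $\mathbb{D}$, so as in \eqref{1eq5.3} the function $\psi_n$ is bounded above and uniformly bounded away from zero.

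It remains to show $f_n(z):=z^n\psi_n(z)\in\mathcal{B}_\alpha^0$ for every $\alpha>0$. Differentiating,
\begin{equation*}
 f_n'(z)=nz^{n-1}\psi_n(z)+z^n\psi_n'(z),\qquad \psi_n'(z)=\frac{n}{g_0(0)}\,\frac{h(z)-h(0)}{z}\,\psi_n(z).
\end{equation*}
Since $\psi_n$ and $(h(z)-h(0))/z$ are both bounded analytic on $\mathbb{D}$, the derivative $f_n'$ is itself bounded on $\mathbb{D}$, so trivially $(1-|z|^2)^\alpha|f_n'(z)|$ is bounded for \emph{every} $\alpha>0$, and $f_n(0)=0$. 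Thus $f_n\in\mathcal{B}_\alpha^0$ and is an eigenfunction with eigenvalue $g_0(0)/n$, giving the reverse inclusion.

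The only real subtlety compared with Theorem~\ref{1theorem5.1} is this final membership step: there the terms $a_jb_j/(1-b_j z)$ inside $\rho$ blow up like $(1-|z|)^{-1}$ near $b_j$, forcing $\alpha\geq 1$ and the extra hypothesis $\mathrm{Re}(a_j/g_1(0))\leq 0$ to control $|\phi|$. For the generalized Alexander operator these polar factors simply do not appear, which is exactly why no restriction on $\alpha$ and no sign condition on the coefficients $a_j$ survives in the statement.
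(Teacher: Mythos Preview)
Your proposal is correct and follows essentially the same route as the paper. The paper's own ``proof'' is the single sentence preceding the theorem: when $\beta=0$ the polar part of $g_\beta$ degenerates to constants, so $\psi_n$ reduces to the factor $\eta$ of \eqref{1eq5.2}, which by \eqref{1eq5.3} is bounded above and below; you have simply unpacked this observation (and, if anything, are slightly more careful in verifying directly that $f_n=z^n\psi_n\in\mathcal{B}_\alpha^0$ rather than only that $\psi_n\in\mathcal{B}_\alpha$).
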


\begin{example} \label{1example5.3}
 Let $f_n(z)={z^n}/n$, for $n\in\mathbb{N}$. Clearly for each $n \in \mathbb{N}$, $f_n \in \mathcal{B}_{\alpha}^0$. 
 Define 
 \begin{align*}
 h_n(z)&=\frac{z^n}{n\|f_n\|_{\mathcal{B}_{\alpha}}}.
 \end{align*}
 We can easily obtain that $\|h_n\|_{\mathcal{B}_{\alpha}}=1$. We estimate
 \begin{align*}
  \|C_{g_0}(h_n)(z)\|_{\mathcal{B}_{\alpha}}&=\bigg\|\int_{0}^{z}\frac{h_n(t) g_0(t)}{t}dt\bigg\|_{\mathcal{B}_{\alpha}}\\
  &\leq\frac{\|g_0\|_\infty}{n\|f_n\|_{\mathcal{B}_{\alpha}}} \bigg\|\int_{0}^{z}t^{n-1}dt\bigg\|_{\mathcal{B}_{\alpha}}                                            
                                            =\frac{\|g_0\|_\infty}{n}  ,
\end{align*}
 which tends to $0$ as $n$ tends to $\infty$. This implies that $(h_n)$ is an approximate eigenvector with eigenvalue $0$. Definition of approximate eigenvector is found in
 \cite[chapter 12]{Bela90}. In other words, we also say that $0$ is the approximate eigenvalue of $C_{g_0}$.
\end{example}
 Using Theorem \ref{1theorem3.3} or Theorem \ref{1theorem4.2} together with Theorem \ref{1theorem5.2}, we can compute the spectrum of the generalized Alexander operator on 
 $\mathcal{B}_{\alpha}^0 $, which is stated in the following form.
 
 \begin{theorem}\label{1theorem5.3}
 The spectrum of the generalized Alexander operator from $\mathcal{B}_{\alpha}^0 $ to $\mathcal{B}_{\alpha}^0 $ is
\begin{align*}
\sigma(C_{g_0}) &= \bigg \lbrace\frac{g_0(0)}{n},n \in \mathbb{N}\bigg \rbrace \bigcup \bigg \lbrace 0 \bigg \rbrace,
\end{align*}
where $0$ is the approximate eigenvalue. 
\end{theorem}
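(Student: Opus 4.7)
The plan is to assemble the conclusion from two ingredients that have already been proven: the compactness of $C_{g_0}$ on $\mathcal{B}_\alpha^0$ (Theorem \ref{1theorem3.3}, or equivalently, the vanishing essential norm in Theorem \ref{1theorem4.2} together with Remark \ref{1remark4.1}), and the identification of the point spectrum (Theorem \ref{1theorem5.2}). No new analytic estimates are needed; the work consists in invoking the right abstract spectral result for compact operators and then stitching in the explicit approximate eigenvector from Example \ref{1example5.3}.

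First I would invoke the classical Riesz--Schauder theorem for compact operators on a Banach space, as stated in \cite{Bela90,Kreyszig01,Limaye96}: every nonzero point of the spectrum of a compact operator is an eigenvalue of finite algebraic multiplicity, and the set of nonzero spectral values is either finite or a null sequence. Applying this to the compact operator $C_{g_0}$ on $\mathcal{B}_\alpha^0$ together with Theorem \ref{1theorem5.2} yields
\begin{equation*}
\sigma(C_{g_0}) \setminus \{0\} \;\subseteq\; \sigma_P(C_{g_0}) \;=\; \bigg\lbrace \frac{g_0(0)}{n} : n\in\mathbb{N} \bigg\rbrace,
\end{equation*}
while the reverse inclusion $\sigma_P(C_{g_0}) \subseteq \sigma(C_{g_0})$ is automatic. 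This pins down the nonzero part of the spectrum exactly, and the required accumulation behavior is consistent with Riesz--Schauder since $g_0(0)/n \to 0$.

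It remains to argue that $0 \in \sigma(C_{g_0})$. Since $\mathcal{B}_\alpha^0$ is infinite-dimensional, no compact operator on it can be invertible, so $0$ necessarily lies in the spectrum. More concretely, Example \ref{1example5.3} already exhibits a normalized sequence $(h_n)$ with $\|C_{g_0}(h_n)\|_{\mathcal{B}_\alpha}\to 0$, which shows that $0$ is an approximate eigenvalue. As the approximate point spectrum is always contained in the spectrum, this both puts $0$ inside $\sigma(C_{g_0})$ and justifies the final clause of the statement. There is no serious obstacle in the argument; the only point worth being explicit about is that one may assemble these inclusions even when $g_0(0)=0$ (in which case the set on the right collapses to $\{0\}$ and the identity still holds trivially).
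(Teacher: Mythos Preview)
Your proposal is correct and follows essentially the same approach as the paper: the paper simply states that the result is obtained by combining Theorem~\ref{1theorem3.3} (or Theorem~\ref{1theorem4.2}) with Theorem~\ref{1theorem5.2}, and refers to Example~\ref{1example5.3} for the fact that $0$ is an approximate eigenvalue. Your write-up is in fact more explicit than the paper's, spelling out the Riesz--Schauder step and the infinite-dimensionality argument that the paper leaves implicit.
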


\begin{remark}
The approximate eigenvalue stated in Theorem~\ref{1theorem5.3} is also discussed
in Example $\ref{1example5.3}$.
\end{remark}

The following theorem provides us the point spectrum of the $\beta$-Ces\`{a}ro operator from 
$\mathcal{B}_\alpha^0$ to itself for various choices of positive $\beta$. The
non-positive values of $\beta$ turns the operator into the generalized 
Alexander operator which is already covered in Theorem~\ref{1theorem5.3}.

\begin{theorem} 
 The point spectrum of the $\beta$-Ces\`{a}ro operator from $\mathcal{B}_{\alpha}^0 $ to $\mathcal{B}_{\alpha}^0 $,
 either for $0<\beta\leq\alpha<1$ or $0<\beta<1<\alpha$ or $0<\beta<\alpha=1$, is
\begin{align*}
\sigma_P(C_{g_\beta}) &= \bigg \lbrace\frac{g_\beta(0)}{n},n \in \mathbb{N}\bigg \rbrace.
\end{align*}
\end{theorem}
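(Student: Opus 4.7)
\medskip\noindent\textbf{Proof proposal.}
The plan is to mirror the structure of Theorem~\ref{1theorem5.1}, replacing the explicit logarithmic antiderivative (special to $\beta=1$) by an integrability analysis valid for $0<\beta<1$, which is precisely the range supplied by the three hypotheses.

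First I would repeat the opening of Theorem~\ref{1theorem5.1} verbatim: starting from $f\in\mathcal{B}_\alpha^0\setminus\{0\}$ with $(\lambda I-C_{g_\beta})f=0$, write $f(z)=z^n\psi(z)$ with $n\geq 1$ and $\psi(0)\neq 0$, differentiate in $z$, and evaluate at $z=0$. This yields the ODE
\[
\lambda n\psi(z)+\lambda z\psi'(z)-\psi(z)g_\beta(z)=0,
\]
which at $z=0$ forces $\lambda=g_\beta(0)/n$, giving the inclusion $\sigma_P(C_{g_\beta})\subseteq\{g_\beta(0)/n:n\in\mathbb{N}\}$ (with equality to $\emptyset$ when $g_\beta(0)=0$, so the conclusion holds vacuously). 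Up to normalization, the corresponding candidate eigenfunction is $f_n(z)=z^n\psi_n(z)$ with
\[
\psi_n(z)=\exp\!\left(\frac{n}{g_\beta(0)}\int_0^z\frac{g_\beta(w)-g_\beta(0)}{w}\,dw\right).
\]

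The substantive step is to verify that $f_n\in\mathcal{B}_\alpha^0$. I would first show that the antiderivative $\Psi(z):=\int_0^z(g_\beta(w)-g_\beta(0))/w\,dw$ is bounded on $\mathbb{D}$. Decomposing $g_\beta$ via \eqref{1eq1.2} and expanding $(1-b_jw)^{-\beta}$ in its binomial series, the Taylor coefficients of $\Psi$ decay like $k^{\beta-2}$ and hence produce an absolutely convergent series on $\overline{\mathbb{D}}$ precisely because $\beta<1$; the $h$-contribution is handled exactly as in \eqref{1eq5.3}. Consequently $\psi_n=\exp(n\Psi/g_\beta(0))$ is bounded on $\mathbb{D}$. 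Differentiating, and combining with the elementary estimate $|(g_\beta(z)-g_\beta(0))/z|\lesssim(1-|z|)^{-\beta}$ near the unit circle together with $f_n'(z)=nz^{n-1}\psi_n(z)+z^n\psi_n'(z)$, yields
\[
(1-|z|^2)^\alpha|f_n'(z)|\lesssim 1+(1-|z|)^{\alpha-\beta},
\]
which is uniformly bounded on $\mathbb{D}$ since $\alpha\geq\beta$ under each of the three hypotheses. Together with $f_n(0)=0$, this gives $f_n\in\mathcal{B}_\alpha^0$.

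I expect the main obstacle to be the uniform boundedness of $\Psi$ (and hence of $\psi_n$) on all of $\mathbb{D}$: the integrand has boundary singularities of order $\beta$ at the points $1/b_j$, and the exponential in $\psi_n$ amplifies any unbounded antiderivative. The strict inequality $\beta<1$ is essential here, and it explains why the full list $\{g_\beta(0)/n\}$ is attained \emph{without} any real-part condition on the $a_j$, in contrast to Theorem~\ref{1theorem5.1} where $\beta=1$ produces a logarithm and the sign constraint $\operatorname{Re}(a_j/g_1(0))\leq 0$ becomes necessary. The borderline case $\beta=\alpha<1$ in the first hypothesis also deserves attention since the final estimate reduces to the constant $1$, leaving no margin; the argument still goes through because the bound on $\psi_n$ obtained above is honest, not merely subpolynomial.
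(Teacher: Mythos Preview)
Your proposal is correct and follows essentially the same route as the paper's proof: after reproducing the ODE reduction from Theorem~\ref{1theorem5.1} to obtain $\lambda=g_\beta(0)/n$ and the candidate $\psi_n$, the paper also decomposes $g_\beta$ via \eqref{1eq1.2}, bounds the $h$-contribution by \eqref{1eq5.3}, expands $(1-b_jw)^{-\beta}$ in its binomial series, and integrates term by term to show the antiderivative is bounded on $\overline{\mathbb{D}}$ (the paper phrases this as a limit comparison with $\sum n^{-1-\delta}$, whereas you state directly that the coefficients decay like $k^{\beta-2}$, which is the same observation). The only cosmetic difference is that the paper concludes by verifying $\psi_n\in\mathcal{B}_\alpha$ rather than $f_n\in\mathcal{B}_\alpha^0$, but since $\psi_n$ is shown to be bounded these are equivalent.
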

\begin{proof}
  If $g_\beta(0)\neq 0$, then with $g_\beta(0)/n$, \eqref{1eq5.1} has a solution $\psi=c\psi_n$, where
\begin{align*}
 \psi_n(z)              &=\exp\bigg(\frac{n}{g_\beta(0)}\int_{0}^{z}\frac{g_\beta(w)-g_\beta(0)}{w}dw\bigg)\\
                        &=\exp\bigg(\frac{n}{g_\beta(0)}\int_{0}^{z}\frac{\sum_{j=1}^{k}\frac{a_j}{(1-b_{j} w)^\beta}+h(w)-\sum_{j=1}^{k}a_j-h(0)}{w}dw\bigg).
                        \end{align*}
                        This is equivalent to
\begin{align}
\label{1eq5.10}         
&\psi_n(z)=\exp\bigg(\frac{n}{g_\beta(0)}\sum_{j=1}^{k}\int_{0}^{z}\frac{\frac{a_{j}}{(1-b_{j} w)^\beta}-a_j}{w}dw\bigg)\\
& \hspace{3cm}\exp\bigg(\frac{n}{g_\beta(0)}\int_{0}^{z}\frac{h(w)-h(0)}{w}dw\bigg).\nonumber
\end{align}
We have already proved in Theorem~\ref{1theorem5.1} that the second factor is bounded.
Hence, it remains to consider only the first factor here.
Recall that
$$
 \frac{1}{(1-b_{j} w)^\beta} = \sum_{n=0}^{\infty}\frac{\Gamma (n+\beta)}{\Gamma (n+1)\Gamma (\beta)}(b_j w)^{n},
$$
where $\Gamma$ is the classical Euler gamma function. We compute the integral 
 \begin{align}
  \int_{0}^{z}\frac{\frac{a_{j}}{(1-b_{j} w)^\beta}-a_j}{w}dw&= a_j\int_{0}^{z} \sum_{n=1}^{\infty}\frac{\Gamma (n+\beta)}{\Gamma (n+1)\Gamma (\beta)}b_j^n w^{n-1}dw\nonumber\\
                                                             &= a_j \sum_{n=1}^{\infty}\frac{\Gamma (n+\beta)}{\Gamma (n+1)\Gamma (\beta)}\frac{(b_j z)^{n}}{n}.\nonumber
\end{align}
We know that $\Gamma (n+\beta)/\Gamma (\beta)=(\beta)_n$, for $\beta\geq0$ and $n\geq0$, where $(\beta)_n$ denotes the shifted factorial defined by
$$
 (a)_n = a(a+1)\dots(a+n-1)  
$$
for $n>0$, and $(a)_0=1$ for a complex number $a$. Then
$$
\int_{0}^{z}\frac{\frac{a_{j}}{(1-b_{j} w)^\beta}-a_j}{w}dw  = a_j \sum_{n=1}^{\infty}\frac{(\beta)_n }{n}\frac{(b_j z)^{n}}{n!}.
$$
This implies that
$$
\left|\int_{0}^{z}\frac{\frac{a_{j}}{(1-b_{j} w)^\beta}-a_j}{w}\,dw \right| 
\le |a_j| \sum_{n=1}^{\infty}\frac{(\beta)_n }{n}\frac{|b_j|^{n}}{n!}
$$
on the circle of convergence $|z|=1$.
Now onward assume that $\beta>0$. We set $\delta=\beta/m$, $m\in\mathbb{N}$. For $|z|=1$, comparing the terms of above series with the corresponding terms of the convergent series
$$
 \sum_{n=1}^{\infty}\frac{1}{n^{1+\delta}}, 
$$
and to use the limit comparison test, we compute
\begin{align}
 \lim \limits_{n\rightarrow \infty}\Bigg|\frac{n^{\delta}(\beta)_n}{n!}\Bigg|&=\lim \limits_{n\rightarrow \infty}\Bigg|\frac{(\beta)_n}{(n-1)!n^\beta}\frac{(n-1)!n^{\delta+\beta}}{n!}\Bigg|.\nonumber
\end{align}
Since
$$
\frac{1}{\Gamma(\beta)} =\lim \limits_{n\rightarrow \infty}\frac{(\beta)_n}{(n-1)!n^\beta}.
$$
We obtain
$$
\lim \limits_{n\rightarrow \infty}\Bigg|\frac{n^{\delta}(\beta)_n}{n!}\Bigg|=\Bigg|\frac{1}{\Gamma(\beta)}\Bigg|\lim \limits_{n\rightarrow \infty}\Bigg|\frac{(n-1)!n^{\delta+\beta}}{n!}\Bigg|
                                                                            =\Bigg|\frac{1}{\Gamma(\beta)}\Bigg|\lim \limits_{n\rightarrow \infty}\Bigg|\frac{1}{n^{1-\delta-\beta}}\Bigg|,
$$
which tends to $0$ as $n\rightarrow\infty$, if $1-\delta-\beta>0$, i.e., if $\beta<m/(m+1)<1$, since $\delta=\beta/m$.
This implies that the series is absolutely convergent for $|z|=1$.
So $\psi_n(z)$ is a bounded analytic function in $\mathbb{D}$. 

Differentiating $\psi_n(z)$ with respect to $z$, we obtain
\begin{align*}
\psi_n^{'}(z)=&\frac{n}{g_\beta(0)}\Bigg(\frac{\sum_{j=1}^{k}\frac{a_j}{(1-b_{j} z)^\beta}+h(z)-\sum_{j=1}^{k}a_j-h(0)}{z}\Bigg)\psi_n(z)\nonumber\\
             =&\frac{n}{g_\beta(0)}\bigg(\sum_{j=1}^{k}\frac{\frac{a_{j}}{(1-b_{j} z)^\beta}-a_j}{z}+\frac{h(z)-h(0)}{z}\bigg)\psi_n(z).
\end{align*}
Since $\beta\leq\alpha$, $h(z)$ and $\psi_n(z)$ are bounded analytic functions in $\mathbb{D}$, it follows that
$$
 \sup\limits_{z \in \mathbb{D}}(1-|z|^2)^{\alpha}|\psi_n^{'}(z)|<\infty,
$$
as desired to have $\psi\in\mathcal{B}_\alpha$ and consequently $f\in\mathcal{B}_\alpha^0$ either for
$0<\beta\leq\alpha<1$ or $0<\beta<1<\alpha$ or $0<\beta<\alpha=1$.
\end{proof}
 \begin{remark}
  Define $\chi_n(z)=\psi_n(z)/\|\psi_n\|_{\mathcal{B}_{\alpha}}$, where $\psi_n$ is defined as in \eqref{1eq5.2} and \eqref{1eq5.10} according to the value of $\alpha$ and $\beta$.
  We know that
 \begin{align*}
  C_{g_\beta}(\chi_n(z))&= \frac{g_\beta(0)}{n} \chi_n(z)
 \end{align*}
 either for $0<\beta\leq\alpha<1$ or $\beta\leq1<\alpha$ or $\beta<\alpha=1$. Then we obtain
 \begin{align*}
 \|C_{g_\beta}(\chi_n)\|_{\mathcal{B}_{\alpha}}= \bigg|\frac{g_\beta(0)}{n}\bigg|.
 \end{align*}
 The right hand side approaches to $0$ as $n$ tends to $\infty$ i.e., $0$ is approximate eigenvalue of the generalized $\beta$-Ces\`{a}ro operator with the approximate eigenvector $\chi_n(z)$.
 \end{remark}

  By using the compactness properties (see Sections~\ref{1sec3} and \ref{1sec4}) of the $\beta$-Ces\`{a}ro operators from $\mathcal{B}_{\alpha}^0 $ to $\mathcal{B}_{\alpha}^0 $,
 for $0<\beta\leq\alpha<1$, $\beta\leq1<\alpha$ and $\beta<\alpha=1$, we establish the following theorem.
\begin{theorem} 
 The spectrum of the $\beta$-Ces\`{a}ro operator from $\mathcal{B}_{\alpha}^0 $ to $\mathcal{B}_{\alpha}^0 $,
 either $0<\beta\leq\alpha<1$ or $\beta\leq1<\alpha$ or $\beta<\alpha=1$, is
\begin{align*}
\sigma(C_{g_\beta}) &= \bigg \lbrace\frac{g_\beta(0)}{n},n \in \mathbb{N}\bigg \rbrace \bigcup \bigg \lbrace 0 \bigg \rbrace.
\end{align*}
\end{theorem}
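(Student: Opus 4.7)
My plan is to combine the compactness results established in Sections~\ref{1sec3} and \ref{1sec4} with the point-spectrum computation from the preceding theorem, and then invoke the Riesz--Schauder spectral theory for compact operators on a Banach space.

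First I would verify that under each of the listed parameter regimes, the operator $C_{g_\beta}:\mathcal{B}_\alpha^0\to\mathcal{B}_\alpha^0$ is compact. For the three strict cases $\beta<\alpha<1$, $\beta<1<\alpha$, and $\beta<\alpha=1$ this is the content of Corollary~\ref{1corollary3.1}. For the boundary cases $\beta=\alpha<1$ and $\beta=1<\alpha$ that remain, Corollary~\ref{1corollary4.2} and the corollary following Theorem~\ref{1theorem4.5} compute the essential norm of $C_{g_\beta}$ to be $0$, which by Remark~\ref{1remark4.1} is equivalent to compactness.

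Next I would apply the classical Riesz--Schauder theorem: for a compact linear operator $T$ on an infinite-dimensional Banach space, the spectrum $\sigma(T)$ is at most countable, every nonzero element of $\sigma(T)$ is an eigenvalue of finite algebraic multiplicity, and the only possible accumulation point of $\sigma(T)$ is $0$; moreover $0$ itself automatically belongs to $\sigma(T)$ (otherwise $T$ would be invertible with bounded inverse, forcing the identity $I=TT^{-1}$ to be compact, which contradicts infinite-dimensionality). Since $\mathcal{B}_\alpha^0$ contains all monomials $z^n$ with $n\geq 1$, it is infinite-dimensional, and therefore
\[
\sigma(C_{g_\beta})=\{0\}\cup \sigma_P(C_{g_\beta}).
\]

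To conclude, I would substitute the point-spectrum description established in the preceding theorem, namely $\sigma_P(C_{g_\beta})=\{g_\beta(0)/n:n\in\mathbb{N}\}$ under the parameter ranges in question, and note that $g_\beta(0)/n\to 0$ already forces $0$ to be a limit point, consistent with the Riesz--Schauder picture (and with the approximate eigenvector $\chi_n$ exhibited in the remark after the previous theorem). The main bookkeeping obstacle I anticipate is the boundary value $\beta=1<\alpha$: the point-spectrum statement from the previous theorem is stated with strict inequality $0<\beta<1<\alpha$, so in this endpoint one must instead invoke Theorem~\ref{1theorem5.1}, whose description of $\sigma_P(C_{g_1})$ carries the extra positivity condition on $\mathrm{Re}(a_j/g_1(0))$. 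Once these hypotheses are read into the standing assumptions, the union $\{g_\beta(0)/n:n\in\mathbb{N}\}\cup\{0\}$ exhausts $\sigma(C_{g_\beta})$, as claimed.
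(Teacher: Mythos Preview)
Your proposal is correct and matches the paper's approach exactly: the paper does not give a separate proof but simply states that the result follows ``by using the compactness properties (see Sections~\ref{1sec3} and \ref{1sec4})'' together with the preceding point-spectrum computation, which is precisely your combination of compactness, Riesz--Schauder theory, and the identification $\sigma(C_{g_\beta})=\{0\}\cup\sigma_P(C_{g_\beta})$. Your bookkeeping observation about the endpoint $\beta=1<\alpha$ (requiring Theorem~\ref{1theorem5.1} rather than the previous theorem, with its additional real-part condition) is in fact more careful than the paper, which does not comment on this discrepancy.
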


\section{\bf An Application: separability of the space $\mathcal{B}_\alpha^0$}\label{1sec6}
Theorem \refeq{1theorem4.5} says that Ces\`{a}ro operator is a compact linear operator on $\mathcal{B}_{\alpha+1}^0$,
 for $\alpha>0$. Then by \cite[Theorem 8.2-3]{Kreyszig01}, the range $\mathfrak{R}(C_1)$ is separable.
 Therefore, we obtain the following property of $\mathcal{B}_\alpha^0$, with the help of the Ces\`{a}ro operator.
\begin{theorem}
  $\mathcal{B}_\alpha^0$ is a separable space in the space $\mathcal{B}_{\alpha+1}^0$, for $\alpha>0$.
\end{theorem}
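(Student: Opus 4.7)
The plan is to realize $\mathcal{B}_\alpha^0$ as a subset of the range $\mathfrak{R}(C_1)$ of the Ces\`aro operator acting on $\mathcal{B}_{\alpha+1}^0$, and then exploit the fact that the range of a compact operator is separable (Kreyszig's Theorem 8.2-3 as cited by the authors). Since $(1-|z|^2)^{\alpha+1}\le (1-|z|^2)^\alpha$ on $\mathbb{D}$, the inclusion $\mathcal{B}_\alpha^0\hookrightarrow \mathcal{B}_{\alpha+1}^0$ is continuous, so it makes sense to view $\mathcal{B}_\alpha^0$ inside $\mathcal{B}_{\alpha+1}^0$. Applying Theorem~\ref{1theorem4.5} with the parameter $\alpha$ replaced by $\alpha+1>1$ together with Remark~\ref{1remark4.1} gives that $C_1$ is compact on $\mathcal{B}_{\alpha+1}^0$, so $\mathfrak{R}(C_1)$ is separable in the $\mathcal{B}_{\alpha+1}$-topology. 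Once we show $\mathcal{B}_\alpha^0\subseteq \mathfrak{R}(C_1)$, the conclusion follows because any subset of a separable metric space is separable.

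For the key inclusion, given $f\in \mathcal{B}_\alpha^0$ I would construct its preimage explicitly. Set
$$
g(z):=z(1-z)f'(z).
$$
Differentiating $C_1(g)$ gives $(C_1(g))'(z)=g(z)/(z(1-z))=f'(z)$ and $C_1(g)(0)=0=f(0)$, so $C_1(g)=f$. What remains is to check that $g\in \mathcal{B}_{\alpha+1}^0$. Clearly $g(0)=0$, and
$$
g'(z)=(1-2z)f'(z)+z(1-z)f''(z).
$$
The first piece is easily controlled by the $\mathcal{B}_\alpha$-norm of $f$:
$$
(1-|z|^2)^{\alpha+1}|1-2z||f'(z)|\le 3(1-|z|^2)(1-|z|^2)^\alpha|f'(z)|\le 3\|f\|_{\mathcal{B}_\alpha}.
$$

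The main obstacle is the second piece, which requires bounding $f''$ uniformly in terms of $\|f\|_{\mathcal{B}_\alpha}$. The standard approach is to apply Cauchy's integral formula for the derivative on the disk $\{w:|w-z|=(1-|z|)/2\}$. On this circle $1-|w|\ge(1-|z|)/2$, so $|f'(w)|\le 2^\alpha\|f\|_{\mathcal{B}_\alpha}/(1-|z|)^\alpha$, and hence
$$
|f''(z)|\le\frac{2}{1-|z|}\cdot\frac{2^\alpha\|f\|_{\mathcal{B}_\alpha}}{(1-|z|)^\alpha}=\frac{2^{\alpha+1}\|f\|_{\mathcal{B}_\alpha}}{(1-|z|)^{\alpha+1}}.
$$
Therefore $(1-|z|^2)^{\alpha+1}|z(1-z)||f''(z)|\le 2^{2\alpha+3}\|f\|_{\mathcal{B}_\alpha}$, yielding $g\in \mathcal{B}_{\alpha+1}^0$. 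Combining the two estimates gives $\mathcal{B}_\alpha^0\subseteq \mathfrak{R}(C_1)\subseteq \mathcal{B}_{\alpha+1}^0$, and the separability of $\mathcal{B}_\alpha^0$ as a subspace of $\mathcal{B}_{\alpha+1}^0$ follows. The Cauchy-formula step is routine but is the only point where real work is needed; everything else is bookkeeping.
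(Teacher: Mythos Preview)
Your proof is correct and follows essentially the same route as the paper: both construct the explicit preimage $z(1-z)f'(z)$ and verify it lies in $\mathcal{B}_{\alpha+1}^0$, then invoke compactness of $C_1$ and Kreyszig's Theorem~8.2-3. The only difference is cosmetic---the paper cites \cite[Proposition~8]{Zhu93} for the bound on $(1-|z|^2)^{\alpha+1}|f''(z)|$, whereas you supply the standard Cauchy-estimate argument that underlies that proposition.
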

\begin{proof}
 To prove this theorem, we need to show that the range $\mathfrak{R}(C_1)$ contains $\mathcal{B}_\alpha^0$, for $\alpha>0$, equivalently for $g\in\mathcal{B}_\alpha^0$ there exists an 
 $f\in\mathcal{B}_{\alpha+1}^0$ such that $C_1(f)=g$.
 
 Let $g\in\mathcal{B}_\alpha^0$ and define $f(z)=z(1-z)g'(z)$. Then $g'(z)=f(z)/z(1-z)$.
 Taking the line integral from $0$ to $z$, we get $g(z)=\int_0^z f(t)/t(1-t) dt$. Now we estimate
 \begin{align*}
  \sup \limits_{z\in\mathbb{D}}(1-|z|^2)^{\alpha+1}|f'(z)| =& \sup\limits_{z\in\mathbb{D}}(1-|z|^2)^{\alpha+1}|(1-z)g'(z)-zg'(z)\\
  &+z(1-z)g''(z)|\\
  \leq& 3\sup \limits_{z\in\mathbb{D}}(1-|z|^2)^{\alpha+1}|g'(z)|\\
  &+2 \sup 
  \limits_{z\in\mathbb{D}}(1-|z|^2)^{\alpha+1}|g''(z)|\\ 
  \leq& 3\sup \limits_{z\in\mathbb{D}}(1-|z|^2)^{\alpha}|g'(z)|\\
  &+2 \sup \limits_{z\in\mathbb{D}}(1-|z|^2)^{\alpha+1}|g''(z)|. 
 \end{align*}
By \cite[Proposition 8]{Zhu93}, we obtain
$$
 \sup \limits_{z\in\mathbb{D}}(1-|z|^2)^{\alpha+1}|g''(z)|<\infty,
$$
which says that $f\in\mathcal{B}_{\alpha+1}^0$.
\end{proof}

\bigskip
\noindent
{\bf Acknowledgement.} 
The second author would like to thank Prof. S. Ponnusamy for bringing the article \cite{AHLNP05} to his attention.


\begin{thebibliography}{99}
\bibitem{AHLNP05}
M. R. Agrawal, P. G. Howlett, S. K. Lucasa, S. Naik, and S. Ponnusamy,
{\em Boundedness of generalized Ces\`{a}ro averaging operators on
certain function spaces},
J. Math. Anal. Appl., {\bf 180} (2005), 333--344.

\bibitem{Albrecht05}  {E. Albrecht, T. L. Miller, M. M. Neumann,}
{\em Spectral properties of generalized Ces\`{a}ro operators on Hardy and weighted Bergman spaces},
 Arch. Math. (Basel), \textbf{85} (5) (2005), 446--459. 

\bibitem{Bela90}  {B. Bollobas,}
{\em Linear Analysis},
Cambridge University Press, 1990. 

\bibitem{DS93}
N. Danikas, A.G. Siskakis, 
{\em The Ces\`{a}ro operator on bounded analytic functions}, 
Analysis, {\bf 13} (1993), 195--199.

\bibitem{Duren83} {P. L. Duren,}
{\em Univalent Functions},
Springer-Verlag, New York, 1983.


\bibitem{Hartmann74} {F. W. Hartmann, and T. H. MacGregor,}
{\em Matrix transformations of univalent power series},
J. Aust. Math. Soc., \textbf{18} (1974), 419--435.

\bibitem{Hidetaka17} {H. Hidetaka,}
{\em Bloch-type spaces and extended Ces\`{a}ro operators in the unit ball of a complex Banach space},
Preprint ({\tt https://arxiv.org/abs/1710.11347})

\bibitem{Kreyszig01} {E. Kreyszig,}
{\em Introductory Functional Analysis with Application},
John Wiley \& Sons Inc., New York, 1989.

\bibitem{Limaye96} {B. V. Limaye,}
{\em Functional Analysis},
New Age International, New Delhi, 1996.

\bibitem{Mia92}
J. Miao, 
{\em The Ces\`{a}ro operator is bounded on $H^p$ for $0 <p< 1$}, 
Proc. Amer. Math. Soc., 
{\bf 116} (4) (1992), 1077--1079.

\bibitem{Miller90} {S. S. Miller, P. T. Mocanu,}
{\em Differential Subordinations: Theory and Applications},
Marcel Dekker, Inc., New York, 1990.

\bibitem{Ponnusamy18} {S. Ponnusamy, S. K. Sahoo, and T. Sugawa},
{\em Integral transforms of functions with bounded boundary rotation},
Preprint.

\bibitem{Julio10} { J. C. Ramos-Fernandez,}
{\em Composition operators on Bloch-Orlicz type spaces},
Appl. Math. Comput., 
\textbf{217} (7) (2010), 3392--3402.

\bibitem{Sis90} 
A. G. Siskakis, 
{\em The Ces\`{a}ro operator is bounded on $H^1$}, 
Proc. Amer. Math. Soc., {\bf 110} (4) (1990), 461--462.

\bibitem{Stevic06}  {S. Stevic,}
{\em Boundedness and Compactness of an integral operator on a weighted space on the polydisk},
Indian J. Pure Appl. Math., \textbf{37} (8) (2006), 343--355.

\bibitem{Stevic09}  {S. Stevic,}
{\em On a new integral-type operator from the Bloch space to Bloch-type spaces on the unit ball},
J. Math. Anal. Appl.,
\textbf{354} (2009), 426--434.

\bibitem{Stevic10}  {S. Stevic,}
{\em On an integral operator between Bloch-type spaces on the unit ball},
{Bul. Sci. Math., \textbf{134} (2010), 329--339}.

\bibitem{Xia97}
J. Xiao, 
{\em Ces\`{a}ro type operators on Hardy, BMOA and Bloch spaces}, 
Arch. Math., {\bf 68} (1997), 398--406.

\bibitem{Zhu93}  {K. Zhu,}
{\em Bloch type spaces of analytic functions},
{ Rocky Mountain J. Math., \textbf{23} (3) (1993), 1143--1177}.

\bibitem{Zhu05} {K. Zhu,}
{\em Spaces of Holomorphic Functions in the Unit Ball},
Springer, USA, 2005.

\bibitem{Zhu07} {K. Zhu,}
{\em Operator Theory in Function Spaces},
Amer. Math. Soc., USA, 2007.


\end{thebibliography}
\end{document}